\newtheorem{theorem}{Theorem}
\newtheorem{definition}{Definition}
\newtheorem{lemma}{Lemma}
\newtheorem{remark}{Remark}
\let\e=\varepsilon
\let\h=v
\let\p=\partial
\let\O=\Omega
\let\o=\omega
\numberwithin{equation}{section}
\let\hide\iffalse
\let\unhide\fi
\newcommand{\nablaA}{\nabla^{\text{a}}}
\newcommand{\nablaS}{\nabla^{\text{sym}}}
\newcommand{\R}{\mathbb{R}}
\renewcommand{\P}{\mathbf{P}}
\newcommand{\be}{\begin{equation}}
\newcommand{\bm}{\begin{multline}}
\newcommand{\ee}{\end{equation}}
\newcommand{\dd}{\mathrm{d}}
\newcommand{\Div}{\text{div} \, }
\newcommand{\Bes}{\begin{eqnarray*}}
\newcommand{\Ees}{\end{eqnarray*}}
\newcommand{\Be}{\begin{equation} }
\newcommand{\Ee}{\end{equation}}
\newtheorem{coro}{Corollary}
\def\p{\partial}
\def\O{\Omega}
\def\R{\mathbb{R}}
\def\B{\begin{equation}}
\def\E{\end{equation}}
\def\BN{\begin{eqnarray*}}
\def\EN{\end{eqnarray*}}
\begin{document}
\title{Macroscopic estimate of the linear Boltzmann and Landau equations with Specular reflection boundary}

 \author{Hongxu Chen}
\address{Department of Mathematics, The Chinese University of Hong Kong, Shatin, N.T., Hong Kong, email:  hongxuchen.math@gmail.com}
 \author{Chanwoo Kim}
 \address{Department of Mathematics, University of Wisconsin-Madison, Madison, WI, 53706, USA, email: chanwookim.math@gmail.com}
 
\date{\today}

\begin{abstract} In this short note, we prove an $L^6$-control of the macroscopic part of the linear Boltzmann and Landau equations. This result is an extension of the test function method of Esposito-Guo-Kim-Marra~\cite{EGKM}\cite{EGKM2} to the specular reflection boundary condition, in which we crucially used the Korn's inequality \cite{DV2} and the system of symmetric Poisson equations \cite{Bernou}. 

\end{abstract}

\maketitle

\section{Introduction}
In this paper, we consider the scaled collisional kinetic models 
\begin{equation*}
\e^s\p_t F + v\cdot \nabla_x F = \frac{1}{\e^k} Q_\alpha(F,F) \ \ \text{for} \ \  (t,x,v) \in \R_+ \times \O \times \R^3,    
\end{equation*}
with general parameters $\e^s$ and $\e^k$ with $s, k\geq 0$ which correspond to \textit{Strouhal number} and \textit{Knudsen number}, respectively. In our study, the spatial domains $\O$ 
is an open, {\color{black} $C^1$} and bounded subset $\O \subset \R^3$. {\color{black}To describe the boundary condition, we denote the boundary of the phase space as
\begin{equation}\label{gamma_bdr}
\gamma:= \{(x,v)\in \p\O \times \mathbb{R}^3\}.
\end{equation}
Let $n=n(x)$ be the outward normal direction at $x\in\p\O$. We decompose $\gamma$ as
\begin{equation*}
\begin{split}
    &\gamma_- = \{(x,v)\in \p\O\times \mathbb{R}^3 : n(x)\cdot v < 0\},  \\
    & \gamma_+ = \{(x,v)\in \p\O\times \mathbb{R}^3 : n(x)\cdot v > 0\}, \\
    &\gamma_0 = \{(x,v)\in \p\O\times \mathbb{R}^3 : n(x)\cdot v = 0\}.
\end{split}    
\end{equation*}}
We impose the specular reflection boundary condition for the incoming part at the boundary $\p\O$:
\begin{equation*}
F (t,x,v)  = F(t,x,R_x v ) \ \ \text{on} \ \  {\color{black}(x,v) \in \gamma_-},  
\end{equation*}
where the reflection operator is given as $R_xv = v- 2 n(x) (n(x) \cdot v)$. Throughout this paper we assume that 
\Be\label{n_cover_all}
\{n(x) : x \in \p\O\} = \mathbb{S}^2.
\Ee
We remark that such a condition excludes periodic-in-$x_3$ cylindrical domains in our consideration (cf. see \cite{KKL, kim2018decay}). 

Two most important models are the Boltzmann equations ($\alpha=B$) and Landau equations $(\alpha = L)$ where corresponding collision operators are respectively given by 
\Be\label{Q_B}
Q_B (F,G):=\int_{\mathbb{R}^3}\int_{\mathbb{S}^2}
|(v-u) \cdot \o|
     \big[F(u')G(v')-F(u)G(v) \big]\dd \omega \dd u,
\Ee
\Be\label{Q_L}
Q_L (F,G):= \nabla_v \cdot \Big\{ \int_{\mathbb{R}^3} \phi(v-u) [F(u)\nabla_v G(v) - G(v) \nabla_v F(u)] \dd u \Big\}.
\Ee
Here, \hide the Boltzmann collision kernel takes a form of 
\Be\notag
B(v-u, \o) = |v-u|^\gamma q_0 \left( \o \cdot \frac{v-u}{|v-u|} \right),
\Ee
with $-3 < \gamma \leq 1$ and a non-negative function $q_0$; while \unhide
the Landau collision kernel for the Coulombic interaction is defined as 
\begin{equation}\label{coulomb}
\phi(z):= \Big\{\mathbf{I} - \frac{z}{|z|}\otimes \frac{z}{|z|} \Big\}\cdot |z|^{-1}.    
\end{equation}


\subsection{Linear problem}
In this paper, we study perturbation solutions around a generic equilibrium, so-called the global Maxwellian: 
\Be\label{Maxwellian}
\mu (v) =  \frac{1}{ (2 \pi)^{3/2} } e^{- \frac{|v|^2}{2}}.
\Ee
More precisely, we are interested in a priori estimate of a family of solutions whose perturbation has a size of the Mach number $\e^m$ with $m\geq 0$:
\begin{align*}
F = \mu + \e^m f  \sqrt{\mu}.    
\end{align*}
Then $f$ should solve the following singular perturbation equations with the specular reflection boundary condition: 
\Be
\p_t f + \frac{1}{\e^s} v\cdot \nabla_x f + \frac{1}{\e^{k+s}} \mathcal L_\alpha f =  \frac{1}{\e^{k+s - m}} \frac{1}{\sqrt \mu} Q_\alpha ( f \sqrt \mu, f \sqrt \mu). \label{eqn_f}
\Ee
Here, the linear operator for Boltzmann ($\alpha= B$ and \eqref{Q_B}) or Landau ($\alpha=L$ and \eqref{Q_L}) equals 
\Be\label{L_alpha}
 \mathcal L_\alpha f = \frac{-1}{\sqrt \mu} \left[Q_\alpha(\mu, \sqrt{\mu} f)
+ Q_\alpha( \sqrt{\mu} f, \mu )
\right] \ \ \text{for} \ \  \alpha  \in  \{B , L\}.
\Ee
This linear operator $\mathcal{L}_\alpha$ has $5$-dimensional null space $\mathcal{N}$, which is spanned by 
\begin{equation}\label{basis_chi}
\begin{split}
 \chi_0(v)  : = \sqrt{\mu} ; \ \ 
 \chi_i(v)     := v_i \sqrt{\mu} \ \text{ for }i=1,2,3 ; \ \
 \chi_4  :=  \frac{|v|^2 - 3}{\sqrt{6}}\sqrt{\mu}.
\end{split}
\end{equation}
We define the $L^2_v$-projection on this null space $\mathcal{N}$ as 
\begin{equation}\label{projection}
\mathbf{P}f : = \sum_{i=0}^4 \langle f , \chi_i \rangle \chi_i = a(x)\chi_0 + \sum_{i=1}^3 b_i(x)\chi_i + c(x)\chi_4,
\end{equation}
where
\begin{equation}
\begin{split}
    \langle f,g\rangle     = \int_{\mathbb{R}^3} f(v)g(v)\dd v ,\label{velocity_inner}\\
   a(x)   : = \langle f,\chi_0\rangle ,  \ \
   b_i(x)   := \langle f,\chi_i\rangle
   \ \text{ for }
   i=1,2,3 ; \ \
   c(x)   := \langle f,\chi_4 \rangle.  
\end{split}
\end{equation}
 The linear operator is non-negative: 
\[
 \langle  \mathcal L_\alpha  f, f \rangle \geq 0.
 \]
 \hide for $\alpha \in \{B, L \}$, there exists $\sigma_\alpha >0$ such that 
 \Be
\langle  \mathcal L_\alpha  f, f \rangle  \geq \sigma_\alpha \|   (\mathbf{I} - \mathbf{P})f \|^2_{\alpha} .
\Ee
where we will specify a dissipation norm $\| \cdot \|_{\alpha}$ for $\alpha \in \{B, L \}$ later in \eqref{} and \eqref{}, respectively.\unhide
 
We note that the collision invariant implies 
\Be\label{collision_inv}
\P \mathcal{L}_\alpha =0 = \P  \left(\frac{1}{\sqrt \mu} Q_\alpha (\cdot \sqrt \mu, \cdot \sqrt \mu)\right).
\Ee

In this paper, we study the linear equations of \eqref{eqn_f}:
\Be\label{linear_bolzmann}
\p_t f + \frac{1}{\e^s} v\cdot \nabla_x f + \frac{1}{\e^{k+s}}  \mathcal L _\alpha f =  g, 
\Ee
\begin{equation}\label{specular_f}
f(t,x,v) = f(t,x,R_x v) \text{ on } x\in \p\O,
\end{equation}
where $g= g(t,x,v)$ is a given function satisfying 
\Be\label{Pg=0}
\mathbf{P} g =0.
\Ee
Here, $\P$ has been defined in~\eqref{projection}.

\hide

{\color{black} [Where did you use this? Please move this to the place used, otherwise delete it:]
}


For the linear Boltzmann operator 
\Be
\|  h \|_B:=  \int_{  \R^n}  \nu (v) |h(v) |^2   \dd v  .
\Ee
For the linear Landau operator 
\Be
\|  \cdot  \|_L:= {\color{black}\text{write it}}
\Ee
\unhide

\subsection{Angular momentum}
We define a finite-dimensional set of rigid motions on $\O$ (\cite{DV2}):
\begin{align}\label{infinitesimal_rigid}
    \mathcal{R}(\O) : =  \{x\in \O \mapsto M x+ x_0\in \mathbb{R}^3: M\in A_3 (\R) , x_0\in \mathbb{R}^3\},
\end{align}
where $A_3(\mathbb{R}): = \{ M = (m_{ij}): m_{ij} \in \R \  \text{for} \  i,j=1,2,3 \ \text{and} \  M + M^T=0\}$ is the 3 dimensional vector space of antisymmetric $3 \times 3$ real matrices. We also define the infinitesimal rigid displacement fields preserving $\O$:
\begin{align}\label{RO}
    \mathcal{R}_\O = \{R\in \mathcal{R}(\O): x_0 = 0, R(x)\cdot n(x) = 0 \text{ for any } x\in\p\O\}.
\end{align}
Recall that $n(x)$ is the outward normal at $x \in \p\O$. Clearly, as long as $\p\O \neq \emptyset$, we have that $\text{dim} (\mathcal{R}_\Omega)$ equals $0,1,$ or $2$.

\begin{definition}\label{def:basis}
For an open bounded subset $\O \subset \R^3$ with $\p\O \neq \emptyset$, we consider a dimension of $\mathcal R_\Omega$ defined in \eqref{RO}. We define 
\begin{equation}\label{def:axiss}
\begin{cases} \text{$\O$ is \textbf{non-axisymmetric} if }\text{dim} (R_\Omega) =0,\\
\text{$\O$ is \textbf{axisymmetric} if }\text{dim} (R_\Omega)  \neq 0 .
\end{cases}
\end{equation}
For an axisymmetric domain, we take $R_1(x)$ and $R_2(x)$ to be an orthonormal basis of $\mathcal{R}_\O$, so that for any $R(x)\in \mathcal{R}_{\O}$, we can express 
\begin{align}
    & R(x) = C_1 R_1(x) + C_2R_2(x) \text{ for some } C_1,C_2, \ R_1(x)\cdot R_2(x) = 0.     \label{basis}
\end{align}
Our convention follows: If $\dim(\mathcal{R}_\O)=1$, then $R_1(x) = R_2(x) \neq 0$; If $\dim(\mathcal{R}_\O)=0$, then $R_1(x)=0 = R_2(x)$.
\end{definition}

With a specular boundary condition, the angular momentum preserves when the domain is axisymmetric: using \eqref{collision_inv}, \eqref{velocity_inner} and \eqref{linear_bolzmann}, we derive that 
\begin{align}
&\frac{d}{dt} \int_{\O } R(x) \cdot b(t,x )  \dd v \dd x =\frac{d}{dt} \iint_{\O\times \mathbb{R}^3} (R(x) \cdot v) f \chi_{0}(v)  \dd v \dd x    \notag\\
&= - \frac{1}{\e^s} \iint_{\O\times \mathbb{R}^3} (R(x) \cdot v) (v\cdot \nabla_x f) \chi_{0}(v)  \dd v \dd x \notag\\
& = \frac{1}{\e^s}  \iint_{\O\times \mathbb{R}^3}  v\cdot \nabla_x (R(x) \cdot v) f \chi_{0}(v)  \dd v \dd x - \frac{1}{\e^s}  \iint_{\partial \O\times \mathbb{R}^3} (R(x) \cdot v)   (n(x) \cdot v) f \chi_{0}(v)  \dd v \dd S_x \notag.
\end{align}
Since $R(x) = Mx$ for some antisymmetric matrix $M$, we have $\nabla_x (R(x) \cdot v)=0$, thus the bulk term in the last line equals zero. Now we consider the second term. Using $I= n(x) \otimes n(x) + (I -n(x) \otimes n(x) )$ and the fact that $f(v)|_{\partial\Omega}$ is even in $n(x) \cdot v$ from the specular reflection boundary condition \eqref{specular_f}, we derive that
\begin{align*}
&\int_{  \mathbb{R}^3} (R(x) \cdot v)  ( n(x) \cdot v) f \chi_{0}  \dd v \bigg|_{\p\O}   \notag\\
& = (R(x) \cdot n(x)) \left(\int_{  \mathbb{R}^3}    (n  \cdot  v)   (n \cdot v) f \chi_{0}   \dd v\right)+  R(x) \cdot  \left(\int_{  \mathbb{R}^3}  \big(v - (n \cdot v) n \big)  ( n  \cdot v) f \chi_{0}  \dd v\right).
\end{align*}
The first term vanishes from $R(x) \cdot n(x)|_{\partial\Omega}=0$ in \eqref{RO}. The second term also vanishes as $ \big(v - (n(x) \cdot v) n(x) \big)   n (x) \cdot v f(x,v) \chi_{0}(v)$ is odd in $(n(x) \cdot v)$ from the specular reflection boundary condition of $f(x,v)$ (i.e. $f(x,v)|_{\partial\Omega}$ is even in $(n(x) \cdot v)$).
Thus we conclude that the total angular momentum is invariant in time:
\begin{align}
  \int_{\O}    R(x)\cdot \p_t b(t,x) \dd x = 0.\label{angular_momentum_preserve}
\end{align}

In this paper, without loss of generality, we assume the initial datum satisfies
\begin{align}
    & \int_{\O} R(x) \cdot \int_{\mathbb{R}^3} v f(0,x,v) \chi_0(v) \dd v \dd x = 0 \text{ for all } R(x)\in \mathcal{R}_\O. \label{angular_momentum_initial}
\end{align}
Then the conservation law~\eqref{angular_momentum_preserve} leads to
\begin{align}
  &\int_{\O} R(x)\cdot \int_{\mathbb{R}^3} v f(t) \chi_{0}(v)  \dd v \dd x  =  \int_{\O}    R(x)\cdot b(t) \dd x = 0. \label{angular_momentum}
\end{align}

 It is standard that the specular reflection boundary condition also preserves the total mass and energy. Therefore, without loss generality, we may assume the initial data always satisfy
\begin{align}
  & \iint_{\O\times \mathbb{R}^3} f_0(x,v) \chi_0 (v)
  \dd x \dd v    = 0, \label{mass_initial} \\
  & \iint_{\O\times \mathbb{R}^3} f_0(x,v)  \chi_4 (v)
  \dd x\dd v = 0, \label{energy_initial}
\end{align}
and hence both total mass and energy are zero:
\begin{align}
  & \iint_{\O\times \mathbb{R}^3} f(t,x,v) \chi_0 (v) \dd x \dd v    = 0, \label{mass_conserve} \\
  & \iint_{\O\times \mathbb{R}^3} f(t,x,v) \chi_4 (v) \dd x \dd v = 0 \label{energy_conserve}.
\end{align}

\subsection{Main Results}
Our main result is an $L^6$-control of the macroscopic part of \eqref{eqn_f}. We rewrite the equation~\eqref{linear_bolzmann} into
\begin{equation}\label{steady_f}
\frac{1}{\e^s}v\cdot \nabla_x f + \frac{1}{\e^{k+s}} \mathcal{L}_\alpha f = g - \p_t f.
\end{equation}

\begin{theorem}\label{thm:l6}For the Boltzmann equations ($\alpha=B$) and Landau equations ($\alpha = L$), let us assume that $f$ solves~\eqref{steady_f} with specular boundary condition~\eqref{specular_f}, $g$ satisfies \eqref{Pg=0}, also assume the initial condition satisfies~\eqref{angular_momentum_initial}, \eqref{mass_initial} and~\eqref{energy_initial}, then we have
\begin{align}
    & \frac{1}{\e^s} \Vert \mathbf{P}f\Vert_{L^6_{x,v}}^6  \lesssim \frac{ \Vert (\mathbf{I}-\mathbf{P})f\Vert_{L^6_{x,v}}^6}{ \e^s} + \frac{ \Vert \mu^{1/4}\mathcal{L}_\alpha f\Vert_{L^2_{x,v}}^6}{ \e^{s+6k}} + \e^{5s}\Vert \nu^{-1/2}(g-\p_t f)\Vert_{L^2_{x,v}}^6. \notag
\end{align}
Recall that $\mathcal{L}_\alpha$ is defined in \eqref{L_alpha}.
\end{theorem}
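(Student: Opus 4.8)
The plan is to adapt the test function method of Esposito–Guo–Kim–Marra to the specular boundary condition by choosing suitable test functions $\psi$ in the weak formulation of \eqref{steady_f}, extracting each macroscopic moment ($a$, $b$, $c$) in turn. First I would set up the weak formulation: multiply \eqref{steady_f} by a test function $\psi=\psi(x,v)$ compatible with the specular boundary condition (so that the boundary term from $v\cdot\nabla_x$ either vanishes or is controllable), integrate over $\O\times\R^3$, and use $\P g = 0$ to kill the coupling of $g$ with purely macroscopic pieces. The terms involving $(\mathbf I - \mathbf P)f$ and $\mathcal L_\alpha f$ and $g-\p_t f$ go to the right-hand side, and one is left with a quadratic form in $(a,b,c)$ on the left.

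The key technical step is the construction of the test functions. For the $c$-part one uses a test function built from solving a Poisson-type equation $-\Delta_x \phi_c = c|c|^4$ (to produce the $L^6$ power) with Neumann-type boundary data dictated by the specular condition; for the $b$-part one needs the system of symmetric Poisson equations from \cite{Bernou}, which respects the boundary structure $R(x)\cdot n(x) = 0$ and is where the axisymmetric/non-axisymmetric dichotomy and the angular-momentum normalization \eqref{angular_momentum} enter — the conservation law \eqref{angular_momentum} is exactly what guarantees solvability (the data is orthogonal to $\mathcal R_\O$). The mass and energy normalizations \eqref{mass_conserve}, \eqref{energy_conserve} play the analogous role for the $a$- and $c$-equations, ensuring the compatibility (zero-average) condition for the Neumann problems. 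Crucially, to bound $\nabla_x b$ in terms of its symmetric part one invokes Korn's inequality \cite{DV2}, whose kernel is precisely $\mathcal R(\O)$; this is why the rigid-motion spaces appear.

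With the test functions in hand I would estimate: the bulk terms $\iint (\mathbf I - \mathbf P)f \, v\cdot\nabla_x\psi$ by Cauchy–Schwarz, putting $\nabla_x\psi$ into $L^{6/5}_x$ by elliptic regularity for the Poisson problems (so that it pairs with $(\mathbf I - \mathbf P)f \in L^6$); the $\mathcal L_\alpha f$ term using that $\mathcal L_\alpha f$ is microscopic and controlled in a weighted $L^2$ (hence the $\mu^{1/4}$ weight and the $\e^{-6k}$ scaling); and the $g - \p_t f$ term against $\psi$ via the $\nu^{-1/2}$-weighted $L^2$ norm. Collecting powers of $\e$ from \eqref{steady_f} (one factor $\e^{-s}$ in front of transport, $\e^{-k-s}$ in front of $\mathcal L_\alpha$) and using Young's inequality with the right exponents to absorb cross terms produces the stated inequality; the bookkeeping of $\e$-powers and Hölder exponents ($6$, $6/5$, $2$, and the sixth powers) is the routine but delicate part.

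The main obstacle I anticipate is handling the boundary terms generated by integrating $v\cdot\nabla_x f$ against the test functions: unlike the diffuse-reflection case, with specular reflection one must carefully exploit the parity of $f$ in $n(x)\cdot v$ (as in the angular-momentum computation above) together with the geometric conditions $R(x)\cdot n(x)=0$ and \eqref{n_cover_all} to show the boundary contributions vanish or are dominated by the microscopic norm; getting the $b$-test function to simultaneously satisfy the correct boundary condition and the symmetric-Poisson system, while remaining consistent with Korn's inequality, is the delicate heart of the argument.
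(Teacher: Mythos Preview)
Your proposal is essentially the paper's approach, and you have correctly identified all the structural ingredients: the weak formulation, Neumann--Poisson test functions for $a$ and $c$, the symmetric Poisson system for $b$, Korn's inequality, and the parity argument for the boundary terms. Two points are underdeveloped and would trip you up if you tried to execute the proof as written.

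First, the source in the $b$-equation is $b^5=(b_1^5,b_2^5,b_3^5)$, not $b$, and the angular-momentum conservation \eqref{angular_momentum} says $\int_\O R\cdot b=0$, \emph{not} $\int_\O R\cdot b^5=0$. So \eqref{angular_momentum} does \emph{not} directly give the compatibility condition for the symmetric Poisson system when $\O$ is axisymmetric. The paper's fix (Section~\ref{sec:axis}) is to subtract from $b^5$ its projection onto $\mathcal R_\O$, solve the system with this modified source, and only then use \eqref{angular_momentum} to kill the extra terms when they are paired back with $b$ on the left-hand side. Second, the $W^{2,6/5}_x$ elliptic estimate for the symmetric Poisson system is not off-the-shelf: the paper spends Lemma~\ref{lemma:complementing} verifying the Agmon--Douglis--Nirenberg complementing condition for the boundary operators in \eqref{system}, and Lemma~\ref{lemma:elliptic} upgrading the resulting Schauder-type estimate $\|u\|_{W^{2,6/5}}\lesssim\|h\|_{L^{6/5}}+\|u\|_{L^{6/5}}$ to the clean bound \eqref{W2p} via a compactness/contradiction argument (which in turn relies on Korn to rule out a nontrivial kernel). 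Your phrase ``elliptic regularity for the Poisson problems'' hides real work here.
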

{

\color{black}The $L^6$ estimate is crucially used in the hydrodynamic limit of the collisional kinetic equation (e.g. \cite{EGKM2,Jang}).
This type of coercivity estimate with integrability gain was first proved by Esposito-Guo-Kim-Marra via a constructive test function approach in the case of diffuse boundary condition \cite{EGKM2}. Their work provided a rigorous derivation of the incompressible Navier-Stokes-Fourier system from the Boltzmann theory in bounded domains. In the proof of Theorem \ref{thm:l6}, we generalize the method of \cite{EGKM2} to the case of specular reflection boundary conditions. We expect our result can be used to study the hydrodynamic limit of the Boltzmann equation in the presence of specular boundary conditions. 

}

{

\color{black}
To prove Theorem \ref{thm:l6}, the key estimate is the $L^6$ control of the momentum $b$. We redesign the test function method for the specular boundary condition inspired by some symmetric Poisson system \cite{Bernou} (see \eqref{system}) and Korn's inequality \cite{DV2} (see Theorem \ref{lemma:estimate_nabla}). We also crucially use an elliptic regularity estimate in $W^{2,\frac{6}{5}}_x$ for the symmetric Poisson system \eqref{system}(apply \eqref{W2p} to \eqref{system_b_5}). 

To prove the $W^{2,\frac{6}{5}}_x$ estimate in \eqref{W2p}, we begin by verifying in Lemma \ref{lemma:complementing} that the symmetric Poisson system and the boundary condition \eqref{system} satisfy the complementing condition in Agmon-Douglis-Nirenberg \cite{ADN}. This result provides an a priori $W^{2,\frac{6}{5}}_x$ estimate for the system \eqref{system}, with an additional $L^{6/5}_x$ estimate that needs to be controlled. Next, in Lemma \ref{lemma:elliptic}, we control the $L^{6/5}_x$ estimate using a classical contradiction argument. We establish existence by constructing an approximating sequence for the source term, denoted as $h_k\in L^2_x \cap L^{6/5}_x$, where $h_k\to h$ in $L^{6/5}_x$. Here $h_k\in L^2_x$ ensures solvability, as stated in Theorem \ref{thm:symmetric_poisson}. The uniqueness of the solution follows from integration by parts. We refer detail to the proof of Theorem \ref{lemma:elliptic}.

}

Our second result is an $L^2$-control of the macroscopic part of \eqref{eqn_f}.
\begin{theorem}\label{thm:coercive} 
For $\alpha=B$ and $\alpha=L$, assume that $f$ solves~\eqref{linear_bolzmann} with specular boundary condition~\eqref{specular_f}, $g$ satisfies \eqref{Pg=0} and initial condition satisfies~\eqref{angular_momentum_initial}, \eqref{mass_initial} and~\eqref{energy_initial}, then we have
\begin{equation*}
\begin{split}
\frac{1}{\e^s}\int_s^t \| \P f (\tau) \|^2_{L^2_{x,v}}  \dd \tau  &\lesssim 
 \frac{1}{\e^s } \int_s^t \Vert \mu^{1/4}(\mathbf{I}-\mathbf{P})f\Vert_{L^2_{x,v}}^2  \dd \tau+ \frac{1}{ \e^{s+2k}} \int_s^t \Vert \mu^{1/4}\mathcal{L}_\alpha f \Vert_{L^2_{x,v}}^2 \dd \tau\\
 &+ G(t) - G(s) + \e^s \int_s^t \Vert g\Vert_{L^2_{x,v}}^2 \dd \tau,
 \end{split}    
\end{equation*}
where $|G(t)|  \lesssim \| f(t) \Vert_{L^2_{x,v}}^2$ is defined in \eqref{G_def}. 
\end{theorem}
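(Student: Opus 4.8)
The plan is to run a Duhamel/energy estimate on \eqref{linear_bolzmann} against the equation's own macroscopic structure, mirroring the test-function strategy used for Theorem \ref{thm:l6} but integrating in time and keeping everything at the $L^2$ level. First I would recall from the proof of Theorem \ref{thm:l6} the constructive test functions $\psi_a, \psi_b, \psi_c$ (built from the symmetric Poisson system \eqref{system} and its $W^{2,2}_x$ regularity — note that at the $L^2$ level one only needs the standard elliptic estimate, not the delicate $W^{2,6/5}_x$ argument) which are designed so that testing $v\cdot\nabla_x f$ against them reproduces $\|a\|^2_{L^2_x}$, $\|b\|^2_{L^2_x}$, $\|c\|^2_{L^2_x}$ up to controllable error. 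I would multiply \eqref{linear_bolzmann} by $\e^s$ times such a test function $\psi$, integrate over $[s,t]\times\O\times\R^3$, and integrate by parts in $x$ and $t$.

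The key algebraic point is the identity
\begin{align*}
\int_s^t \iint_{\O\times\R^3} (\p_t f)\,\psi \,\dd v\,\dd x\,\dd\tau
&= \Big[\iint_{\O\times\R^3} f\,\psi\,\dd v\,\dd x\Big]_s^t
- \int_s^t \iint_{\O\times\R^3} f\,\p_t\psi\,\dd v\,\dd x\,\dd\tau,
\end{align*}
where the boundary-in-time term, being bilinear in $f$ through the $x$-elliptic solve defining $\psi$, is bounded by $\|f(t)\|^2_{L^2_{x,v}}+\|f(s)\|^2_{L^2_{x,v}}$ and gets absorbed into $G(t)-G(s)$; the term with $\p_t\psi$ is handled because $\p_t\psi$ solves the same elliptic system with source $\p_t(a,b,c)$, and one trades that back against the equation again — or, more cleanly, one works with a test function depending only on $\ip f$-free macroscopic moments so that $\p_t\psi$ is itself controlled by $\p_t a,\p_t b,\p_t c$, which in turn (from \eqref{linear_bolzmann} tested against $\chi_i$) are controlled by $\nabla_x \ip f$, $g$, and $\nabla_x\P f$ with a spare power of $\e$. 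The spatial boundary terms from integration by parts are where the specular condition \eqref{specular_f} enters: exactly as in the angular-momentum computation in the introduction, the odd-in-$n\cdot v$ structure kills the dangerous contributions, using crucially that $\psi$ is built to be compatible with the boundary (its normal component vanishing on $\p\O$, as in \eqref{RO}), and the remaining boundary pieces are absorbed using \eqref{n_cover_all} and Korn's inequality (Theorem \ref{lemma:estimate_nabla}) together with the removal of rigid motions via \eqref{angular_momentum}.

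After collecting terms, the left side produces $\e^{-s}\int_s^t\|\P f\|^2_{L^2_{x,v}}$ (using that $\|\P f\|^2_{L^2_{x,v}}\sim \|a\|^2_{L^2_x}+\|b\|^2_{L^2_x}+\|c\|^2_{L^2_x}$), while the right side collects: (i) the $v\cdot\nabla_x\ip f$ term tested against $\psi$, bounded by $\e^{-s}\|\mu^{1/4}\ip f\|^2_{L^2_{x,v}}$ after Cauchy–Schwarz and elliptic estimates on $\psi$; (ii) the $\e^{-(k+s)}\mathcal L_\alpha f$ term, bounded by $\e^{-(s+2k)}\|\mu^{1/4}\mathcal L_\alpha f\|^2_{L^2_{x,v}}$; (iii) the source term $g$, giving $\e^{s}\|g\|^2_{L^2_{x,v}}$; and (iv) the time-boundary data forming $G(t)-G(s)$. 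Choosing Young's inequality weights so that any reappearing $\e^{-s}\|\P f\|^2$ on the right is absorbed into the left completes the estimate.

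The main obstacle I anticipate is controlling the $\p_t\psi$ contribution without losing powers of $\e$: the cleanest route is to replace $\p_t$ in that term using the equation \eqref{linear_bolzmann} itself, i.e. $\p_t a,\p_t b_i,\p_t c$ are expressed (via testing \eqref{linear_bolzmann} against $\chi_0,\chi_i,\chi_4$ and \eqref{collision_inv}) as $\e^{-s}$ times divergences of fluxes built from $\ip f$ plus $\e^s\langle g,\chi\rangle$-type terms, after which an integration by parts in $x$ moves the derivative onto $\psi$ and everything closes at the stated orders. Getting the $\e$-bookkeeping exactly as in the theorem statement — in particular that the $\ip f$ and $\mathcal L_\alpha f$ terms carry only $\e^{-s}$ and $\e^{-(s+2k)}$ respectively while $g$ carries $\e^{+s}$ — is the delicate part, and mirrors precisely the scaling gymnastics already performed for Theorem \ref{thm:l6}.
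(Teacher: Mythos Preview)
Your proposal is correct and follows essentially the same route as the paper's proof: the paper also integrates the weak formulation \eqref{linear_bolzmann} in time against the test functions $\psi_a,\psi_b,\psi_c$ (with $\phi_b$ now solving the symmetric Poisson system \eqref{system} with source $b$ in $L^2_x$ rather than $b^5$), defines $G(t)$ as the pairing $-\iint \psi f(t)$, and handles the new $\int f\,\p_\tau\psi$ contribution exactly as you outline --- by observing that $\p_\tau\phi_b$ solves the same elliptic system with source $\p_\tau b$, and bounding $\|\p_\tau\phi_b\|_{H^1_x}$ via testing \eqref{linear_bolzmann} against $\phi\chi_j$ to express $\p_\tau b$ through $a,c,\ip f$. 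Two minor sharpenings relative to your sketch: the spatial boundary terms vanish \emph{exactly} (not merely ``absorbed'') thanks to the second boundary condition in \eqref{system}, just as in the $L^6$ computation; and the $\langle g,\chi_i\rangle$ terms you mention are in fact zero by \eqref{Pg=0}, so $g$ does not enter the $\p_\tau(a,b,c)$ estimates at all.
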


{\color{black}
By combining this estimate with Weyl's theorem, Theorem \ref{thm:coercive} leads to a linear full coercivity. This type of coercivity estimate was first proved by Guo (e.g. see \cite{Guo_landau_box}) when there is no boundary. Then a non-constructive proof of the coercivity in the presence of a boundary has been established by Guo and other people (e.g. see \cite{G, kim2018boltzmann, Guo_landau_bdr, Guo_landau_correction}). In the case of diffuse reflection boundary condition, Esposito-Guo-Kim-Marra devised a constructive proof of the coercivity in \cite{EGKM}. Theorem \ref{thm:coercive} can be viewed as an alternative proof of the $L^2$ coercive estimate in \cite{G} in the spirit of the test function method of \cite{EGKM}.

\begin{remark}

In the case of Maxwell boundary conditions, including pure specular boundary conditions, a constructive approach has been proposed by \cite{Bernou}. Their work utilizes Korn's inequality to construct a symmetric Poisson system (see Theorem \ref{thm:symmetric_poisson}) that provides an $L^2$ control of the momentum quantity. They establish the following estimate:
\begin{align*}
\langle \langle v\cdot \nabla_x f + \mathcal{L}_\alpha f,f \rangle \rangle \gtrsim ||| f |||,
\end{align*}
here $\langle \langle \cdot ,\cdot \rangle\rangle$ is a scalar product on $L^2(\O\times \mathbb{R}^3)$, the associated norm $|||\cdot|||$ is equivalent to the standard $L^2$ norm. This result yields the exponential decay of the linear solution $f$.

\end{remark}
}

The following corollary is the consequence of Theorem \ref{thm:coercive} for the Boltzmann equation.
\begin{coro}\label{coro:boltzmann}
We consider the Boltzmann equation where $\mathcal{L}_\alpha$ ($\alpha = B$) is given by the linearized operator $\mathcal{L}_B$ in \eqref{L_alpha} with \eqref{Q_B}. Suppose all assumptions in Theorem \ref{thm:coercive} hold. Then we have 
\begin{equation*}
\begin{split}
\frac{1}{\e^s}\int_s^t \| \P f (\tau) \Vert_{L^2_{x,v}}^2  \dd \tau  \lesssim  G(t) - G(s) +
 \frac{1}{\e^{s+2k} } \int_s^t \Vert (\mathbf{I}-\mathbf{P})f\Vert_{L^2_{x,v}}^2  \dd \tau
 + \e^s \int_s^t \Vert g\Vert_{L^2_{x,v}}^2 \dd \tau  .
\end{split}
\end{equation*}

\end{coro}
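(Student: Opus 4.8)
The plan is to derive Corollary \ref{coro:boltzmann} directly from Theorem \ref{thm:coercive} by absorbing the non-hydrodynamic dissipation term $\frac{1}{\e^s}\int_s^t \|\mu^{1/4}(\mathbf{I}-\mathbf{P})f\|^2_{L^2_{x,v}}\,\dd\tau$ on the right-hand side into the weaker term $\frac{1}{\e^{s+2k}}\int_s^t \|(\mathbf{I}-\mathbf{P})f\|^2_{L^2_{x,v}}\,\dd\tau$ that already appears. The only structural input needed from the Boltzmann setting specifically is the elementary pointwise/weight comparison: since $\mu^{1/4}$ is bounded, $\|\mu^{1/4}(\mathbf{I}-\mathbf{P})f\|_{L^2_{x,v}} \lesssim \|(\mathbf{I}-\mathbf{P})f\|_{L^2_{x,v}}$. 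Furthermore, because the Knudsen exponent satisfies $k\ge 0$, we have $\e^{s+2k}\le \e^s$ for $\e\in(0,1]$ (the regime of interest in the singular perturbation), hence $\frac{1}{\e^s}\le \frac{1}{\e^{s+2k}}$.

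First I would apply Theorem \ref{thm:coercive} verbatim, which under the stated hypotheses (all of which are assumed to hold in the corollary) gives
\begin{equation*}
\frac{1}{\e^s}\int_s^t \|\P f(\tau)\|^2_{L^2_{x,v}}\,\dd\tau \lesssim \frac{1}{\e^s}\int_s^t \|\mu^{1/4}(\mathbf{I}-\mathbf{P})f\|^2_{L^2_{x,v}}\,\dd\tau + \frac{1}{\e^{s+2k}}\int_s^t \|\mu^{1/4}\mathcal{L}_B f\|^2_{L^2_{x,v}}\,\dd\tau + G(t)-G(s) + \e^s\int_s^t\|g\|^2_{L^2_{x,v}}\,\dd\tau.
\end{equation*}
Next I would bound the first term on the right using $\|\mu^{1/4}(\mathbf{I}-\mathbf{P})f\|_{L^2_{x,v}}\lesssim \|(\mathbf{I}-\mathbf{P})f\|_{L^2_{x,v}}$ together with $\frac{1}{\e^s}\le\frac{1}{\e^{s+2k}}$, so that this term is controlled by $\frac{1}{\e^{s+2k}}\int_s^t\|(\mathbf{I}-\mathbf{P})f\|^2_{L^2_{x,v}}\,\dd\tau$. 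Then I would bound the $\mathcal{L}_B$ term: for the Boltzmann linearized operator one has the standard estimate $\|\mu^{1/4}\mathcal{L}_B f\|_{L^2_{x,v}}\lesssim \|\nu^{1/2}(\mathbf{I}-\mathbf{P})f\|_{L^2_{x,v}}$ (since $\mathcal{L}_B f = \mathcal{L}_B(\mathbf{I}-\mathbf{P})f$ and $\mathcal{L}_B = \nu - K$ with $K$ smoothing and $\mu^{1/4}\nu$-integrable bounds), and for the hard-sphere / cutoff kernel the collision frequency $\nu(v)$ is bounded polynomially, so in fact after including the $\mu^{1/4}$ weight one gets $\|\mu^{1/4}\mathcal{L}_B f\|_{L^2_{x,v}}\lesssim \|(\mathbf{I}-\mathbf{P})f\|_{L^2_{x,v}}$; hence $\frac{1}{\e^{s+2k}}\int_s^t\|\mu^{1/4}\mathcal{L}_B f\|^2\,\dd\tau \lesssim \frac{1}{\e^{s+2k}}\int_s^t\|(\mathbf{I}-\mathbf{P})f\|^2\,\dd\tau$. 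Combining the two contributions gives exactly the claimed bound.

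The main obstacle — really the only subtlety — is justifying the weighted operator bound $\|\mu^{1/4}\mathcal{L}_B f\|_{L^2_{x,v}}\lesssim \|(\mathbf{I}-\mathbf{P})f\|_{L^2_{x,v}}$ for the Boltzmann case, which does not hold for general Landau (where the natural dissipation norm $\|\cdot\|_L$ involves anisotropic velocity weights and derivatives, so one cannot pull the $\mathcal{L}_L$ term down to a plain $L^2$ norm of $(\mathbf{I}-\mathbf{P})f$); this is precisely why the corollary is stated only for $\alpha=B$. For Boltzmann with an angular cutoff, this follows from the classical decomposition $\mathcal{L}_B = \nu(v)\,\mathrm{Id} - K$ where $\nu(v)\sim (1+|v|)^\gamma$ with $\gamma\le 1$ is at most linearly growing and $K$ has a kernel that is integrable against $\mu^{1/4}$, so that multiplication by $\mu^{1/4}$ absorbs the polynomial growth and returns a bounded operator on $L^2_v$; I would cite the standard references (e.g. Guo) for this rather than reprove it. With that in hand the corollary is a two-line consequence of Theorem \ref{thm:coercive}.
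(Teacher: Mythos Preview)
Your proposal is correct and matches the paper's own proof essentially line for line: the paper applies Theorem \ref{thm:coercive} and then invokes $\|\mathcal{L}_B f\|_{L^2_{x,v}} = \|\mathcal{L}_B(\mathbf{I}-\mathbf{P})f\|_{L^2_{x,v}} \lesssim \|\nu(\mathbf{I}-\mathbf{P})f\|_{L^2_{x,v}}$ together with $\nu(v)\mu^{1/4}(v)\lesssim 1$ to absorb both right-hand terms into $\frac{1}{\e^{s+2k}}\int_s^t\|(\mathbf{I}-\mathbf{P})f\|_{L^2_{x,v}}^2\,\dd\tau$, exactly as you outline. Your remark that the step $\frac{1}{\e^s}\le\frac{1}{\e^{s+2k}}$ tacitly uses $\e\le 1$ is a useful clarification that the paper leaves implicit.
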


Next we consider the case of Landau operator. Following \cite{Guo_landau_box} we define
\begin{equation}\label{sigma}
\sigma^{ij} := \phi^{ij}*\mu = \int_{\mathbb{R}^3} \phi^{ij}(v-v') \mu(v') \dd v'.
\end{equation}
Here $\phi^{ij}$ is defined in~\eqref{coulomb}. Define a weighted norm as
\begin{equation}\label{sigma_norm}
\Vert f\Vert_\sigma^2 :=\int_{\O\times \mathbb{R}^3} \Big[ \sigma^{ij} \p_{v_i} f\p_{v_j} f  +\sigma^{ij} v_i v_j f^2 \Big] \dd x \dd v.
\end{equation}

\begin{coro}\label{coro:landau}
We consider the Landau equation where $\mathcal{L}_\alpha$ ($\alpha=L$) is given by the linearized Landau operator $\mathcal{L}_L$ in \eqref{L_alpha} with \eqref{Q_L}. We have the following $\sigma$-norm control for the macroscopic quantities:
\begin{align*}
 \frac{1}{\e^s} \int_s^t \Vert \mathbf{P}f(\tau)\Vert_{\sigma}^2 \dd \tau  & \lesssim G(t)-G(s) + \frac{1}{\e^{s+2k}} \int_s^t  \Vert (\mathbf{I}-\mathbf{P})f(\tau)\Vert_\sigma^2 \dd \tau +\e^s \int_s^t \Vert g\Vert_{L^2_{x,v}}^2 \dd \tau. 
\end{align*}

\end{coro}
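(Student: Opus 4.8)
The plan is to re-run the proof of Theorem~\ref{thm:coercive} for $\alpha=L$, changing only the two places where the linearized operator and the macroscopic norm actually enter, and importing three standard properties of $\mathcal L_L$ and of the matrix $\sigma^{ij}$ from \eqref{sigma} (all contained in \cite{Guo_landau_box}). \textbf{(i)} On the null space $\mathcal N$ the $\sigma$-norm and the $L^2_v$-norm are equivalent: since each $\chi_i$ is a polynomial times $\sqrt\mu$ while $\sigma^{ij}(v)$ and $\sigma^{ij}(v)v_iv_j$ grow at most polynomially, $(a,b,c)\mapsto\int_{\R^3}[\sigma^{ij}\p_i g\,\p_j g+\sigma^{ij}v_iv_j g^2]\,\dd v$ with $g=a\chi_0+\sum_i b_i\chi_i+c\chi_4$ is a positive-definite quadratic form on $\R^5$, so after integrating in $x$, $\Vert\P f\Vert_\sigma^2\asymp\Vert a\Vert_{L^2_x}^2+\Vert b\Vert_{L^2_x}^2+\Vert c\Vert_{L^2_x}^2=\Vert\P f\Vert_{L^2_{x,v}}^2$; the same equivalence applies to the test function $\psi$ of the proof of Theorem~\ref{thm:coercive}, which has the separated form $\sum_j\Phi_j(x)\eta_j(v)$ with fixed Gaussian-weighted $\eta_j$, giving $\Vert\psi\Vert_\sigma\asymp\Vert\mu^{-1/4}\psi\Vert_{L^2_{x,v}}$. \textbf{(ii)} The $\sigma$-norm controls a weighted $L^2$-norm: $\Vert h\Vert_\sigma^2\gtrsim\int_{\O\times\R^3}(1+|v|)^{-3}|h|^2\,\dd x\,\dd v\gtrsim\Vert\mu^{1/4}h\Vert_{L^2_{x,v}}^2$. \textbf{(iii)} $\mathcal L_L$ is $\sigma$-bilinear-continuous, $|\langle\mathcal L_L g_1,g_2\rangle|\lesssim\Vert g_1\Vert_\sigma\Vert g_2\Vert_\sigma$, which together with $\mathcal L_L\P=0$ and self-adjointness gives $|\langle\mathcal L_L f,\psi\rangle|=|\langle\mathcal L_L\ip f,\ip\psi\rangle|\lesssim\Vert\ip f\Vert_\sigma\Vert\psi\Vert_\sigma$.

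\textbf{Carrying it out.} In the proof of Theorem~\ref{thm:coercive} the operator $\mathcal L_\alpha$ enters only through the pairing $\frac1{\e^{k+s}}\langle\mathcal L_\alpha f,\psi\rangle$ against the test function (any Dirichlet form $\langle\mathcal L_L f,f\rangle$ occurring is $\geq0$ and is discarded). In the $L^2$ proof one writes $|\langle\mathcal L_\alpha f,\psi\rangle|\le\Vert\mu^{1/4}\mathcal L_\alpha f\Vert_{L^2}\Vert\mu^{-1/4}\psi\Vert_{L^2}$ and applies Young's inequality with weight $\e^{-k}$, producing $\frac1{\e^{s+2k}}\Vert\mu^{1/4}\mathcal L_\alpha f\Vert_{L^2}^2+\frac1{\e^s}\Vert\mu^{-1/4}\psi\Vert_{L^2}^2$. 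For $\alpha=L$ we instead use \textbf{(iii)} and the same Young step:
\[
\frac1{\e^{k+s}}\,|\langle\mathcal L_L f,\psi\rangle|\;\lesssim\;\frac1{\e^{s+2k}}\Vert\ip f\Vert_\sigma^2+\frac1{\e^{s}}\Vert\psi\Vert_\sigma^2 .
\]
By \textbf{(i)} the two contributions are exactly those of the $L^2$ proof with $\Vert\mu^{1/4}\mathcal L_L f\Vert_{L^2}^2$ replaced by $\Vert\ip f\Vert_\sigma^2$ and $\Vert\mu^{-1/4}\psi\Vert_{L^2}^2$ by the equivalent $\Vert\psi\Vert_\sigma^2$; hence the $\e^{-s}\Vert\psi\Vert_\sigma^2$ term is absorbed (via the elliptic estimates for the symmetric Poisson system) exactly as in Theorem~\ref{thm:coercive}. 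Every leftover error of the form $\frac1{\e^s}\Vert\mu^{1/4}\ip f\Vert_{L^2}^2$ from the macroscopic-extraction step is dominated by $\frac1{\e^s}\Vert\ip f\Vert_\sigma^2\le\frac1{\e^{s+2k}}\Vert\ip f\Vert_\sigma^2$ (for $\e\le1$) via \textbf{(ii)}, and the left-hand side $\frac1{\e^s}\Vert\P f\Vert_{L^2}^2$ is replaced by the smaller $\frac1{\e^s}\Vert\P f\Vert_\sigma^2$ via \textbf{(i)}. Since $G(t)-G(s)$ and $\e^s\int\Vert g\Vert_{L^2}^2$ are untouched, collecting the estimates gives the claimed inequality.

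\textbf{Main obstacle.} The one genuine departure from the Boltzmann corollary (Corollary~\ref{coro:boltzmann}) is that $\mathcal L_L$ is a second-order differential operator in $v$, so there is \emph{no} pointwise-in-$x$ bound $\Vert\mu^{1/4}\mathcal L_L f\Vert_{L^2_v}\lesssim\Vert\ip f\Vert_{\sigma_v}$ analogous to the Boltzmann bound $\Vert\mu^{1/4}\mathcal L_B f\Vert_{L^2}\lesssim\Vert\ip f\Vert_{L^2}$; the factor $\Vert\mu^{1/4}\mathcal L_L f\Vert_{L^2}$ in Theorem~\ref{thm:coercive} cannot be converted to a $\sigma$-norm after the fact. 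This is why Corollary~\ref{coro:landau} is obtained by re-running the proof of Theorem~\ref{thm:coercive} with the $\sigma$-bilinear form kept \emph{inside} the pairing rather than by a black-box application of the theorem. Everything else — checking that the Poisson test function has $\sigma$-norm comparable to its weighted $L^2$-norm, and that the powers of $\e$ are unchanged — is routine.
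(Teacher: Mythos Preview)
Your proposal is correct and follows essentially the same approach as the paper: re-run the proof of Theorem~\ref{thm:coercive}, modifying only the estimate of the pairing $\langle\mathcal L_L f,\psi\rangle$ by exploiting that the test functions $\psi_a,\psi_b,\psi_c$ carry a $\sqrt{\mu}$ weight, and then convert the remaining $\Vert\mu^{1/4}\ip f\Vert_{L^2}^2$ and $\Vert\P f\Vert_{L^2}^2$ terms to $\sigma$-norms. The paper phrases the key bound as $|\langle\mathcal L_L f,\mu^{1/4}g\rangle|\lesssim o(1)|g|_\sigma^2+|\ip f|_\sigma^2$ (its equation~\eqref{L_upper_bound}) and uses that $\psi=\mu^{1/4}\cdot(\mu^{1/4}\text{-weighted function})$, whereas you use the equivalent general bilinear bound $|\langle\mathcal L_L g_1,g_2\rangle|\lesssim\Vert g_1\Vert_\sigma\Vert g_2\Vert_\sigma$ together with $\Vert\psi\Vert_\sigma\asymp\Vert\mu^{-1/4}\psi\Vert_{L^2}$; both lead to the same absorbable error and the same conclusion.
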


{\color{black}\begin{remark}
Corollary \ref{coro:landau} does not follow directly from Theorem \ref{thm:coercive} since we do not a direct control for $\Vert \mathcal{L}_L f\Vert_{L^2_{x,v}}$. We slightly modify the proof of Theorem \ref{thm:coercive} to handle the contribution of $\mathcal{L}_L f$. We refer detail to the proof at the end of Section \ref{sec:4.1}.
\end{remark}}

\textbf{Outline}. In Section \ref{sec:prelim}, we present several preliminary results regarding the elliptic system and Korn's inequality. Following that, in Section \ref{sec_l6}, we focus on establishing the $L^6$-estimate for the momentum $b$. By combining this estimate with the standard estimates for the mass $a$ and energy $c$, we conclude our main result Theorem \ref{thm:l6}. In Section \ref{sec:l2}, we mimic the proof of Theorem \ref{thm:l6} to obtain the $L^2$ coercive estimate in Theorem \ref{thm:coercive} by a similar test function method.

\hide {\color{black}[Please merge your comments here. You may need to rewrite it:
We will conclude Theorem \ref{thm:coercive} and Theorem \ref{thm:l6} in Section \ref{sec:proof} for general open, bounded and smooth domain. Section \ref{sec:prelim} to Section \ref{sec_l6} serve as preliminary for the proof, where we assume the domain $\O$ is $C^1$, bounded and open.

In fact, Theorem \ref{thm:coercive} and Theorem \ref{thm:l6} also hold for cylindrical domain, i.e, the domain is periodic in the $z$-coordinate. In Section \ref{sec:cylinder}, we will pinpoint the difference conclude both theorems in cylindrical setting.]}
\unhide

\section{Preliminary results
}\label{sec:prelim}

In this section, we summarize some results of Desvillettes-Villani \cite{DV2},   Bernou et al. \cite{Bernou} and Agmon-Douglis-Nirenberg \cite{ADN}, which are crucially used in the proof of the main theorem. 

\subsection{Korn's inequality}
For a smooth vector field $u: \O \rightarrow \R^3$, let $\nabla u$ and $\nablaS u$ represent the Jacobian matrix of $u$ and its symmetric part (a.k.a. the deformation tensor) respectively :
\begin{equation*}
(\nabla u)_{ij} = \frac{\p u_i}{\p x_j}
, \ \  \ \ 
(\nablaS u)_{ij} = \frac{1}{2} \left(
\frac{\p u_i}{\p x_j} + \frac{\p u_j}{\p x_i}
\right).    
\end{equation*}
The antisymmetric part of the Jacobian matrix is denoted by 
\begin{equation*}
(\nablaA u)_{ij} = (\nabla u)_{ij} - (\nablaS u)_{ij}: = \frac{1}{2} \left(
\frac{\p u_i}{\p x_j} - \frac{\p u_j}{\p x_i}
\right).    
\end{equation*}
Later we will use the following identity:
\Be\label{div_Delta}
 \Div(\nablaS u) = {\color{black}\frac{1}{2}} \left(\Delta u + \nabla \Div  u\right).
 \Ee

In this paper we will need the following result of \cite{DV2}:
\begin{theorem}[\cite{DV2}]\label{lemma:estimate_nabla}
Let $\O$ be a $C^1$ bounded, open subset of $\R^3$. Let $u$ be a vector field on $\O$ with $\nabla u \in L^2_x$. Assume that $u$ is tangent to the boundary $\p\O$:
\Be\notag
\forall x \in \p\O, \ \ u (x) \cdot n(x) =0,
\Ee
where $n(x)$ stands for the outer unit normal vector to $\O$ at point $x$. Then there exists a constant $K(\O)>0,$ only depending on $\O$, such that  
\begin{equation*}
    {\color{black} K(\O) \Vert  u\Vert_{H^1_x}^2 } \leq \|\nablaS u 
\|^2_{L^2_x} +  \Big|
 {\color{black} P_\O\Big(  \int_{\O} \nablaA u  \dd x  \Big)}
  \Big|^2.
\end{equation*}
Here, $P_\O$ denotes the orthogonal projection onto the set $\mathcal{A}_\O = \{A\in A_3(\mathbb{R});Ax\in \mathcal{R}_\O\}$. We remind that $A_3(\mathbb{R})$ and $\mathcal{R}_\O$ were defined in \eqref{infinitesimal_rigid}. 
\end{theorem}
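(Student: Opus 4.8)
The natural route is a contradiction–compactness argument in the spirit of \cite{DV2}. First I would record the purely algebraic (distributional) identity
\[
\p_k\p_l u_i = \p_l(\nablaS u)_{ik} + \p_k(\nablaS u)_{il} - \p_i(\nablaS u)_{kl},
\]
which exhibits every second derivative of $u$ as a first derivative of a component of $\nablaS u$. Combining this with the Ne\v{c}as inequality on the bounded $C^1$ (indeed Lipschitz) domain $\O$ — namely $\|w\|_{L^2(\O)}\lesssim \|w\|_{H^{-1}(\O)}+\|\nabla w\|_{H^{-1}(\O)}$, applied to $w=\p_l u_i$ (so $\|\p_l u_i\|_{H^{-1}}\lesssim\|u\|_{L^2_x}$ and $\|\nabla\p_l u_i\|_{H^{-1}}\lesssim\|\nablaS u\|_{L^2_x}$) — yields Korn's inequality of the second kind, $\|\nabla u\|_{L^2_x}^2 \lesssim \|\nablaS u\|_{L^2_x}^2 + \|u\|_{L^2_x}^2$.

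Next I would argue by contradiction. If the claimed $K(\O)$ did not exist, there would be vector fields $u_n$ with $\nabla u_n\in L^2_x$, $\|u_n\|_{H^1_x}=1$, and $\|\nablaS u_n\|_{L^2_x}^2 + \big|P_\O(\int_\O \nablaA u_n\,\dd x)\big|^2\to 0$. Boundedness in $H^1_x$ plus Rellich compactness gives a subsequence with $u_n\to u$ strongly in $L^2_x$. Applying the Korn inequality of the previous paragraph to $u_n-u_m$ gives $\|\nabla(u_n-u_m)\|_{L^2_x}\lesssim\|\nablaS(u_n-u_m)\|_{L^2_x}+\|u_n-u_m\|_{L^2_x}\to 0$, so $(u_n)$ is Cauchy in $H^1_x$; hence $u_n\to u$ in $H^1_x$, $\|u\|_{H^1_x}=1$, and $\nablaS u=0$.

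It then remains to show $u=0$, which is the contradiction. Since $\nablaS u=0$ (on each connected component of $\O$, if it is disconnected), $u$ is an infinitesimal rigid motion, $u(x)=Mx+x_0$ with $M\in A_3(\R)$, so $\nablaA u\equiv M$. Continuity of the trace map $H^1_x\to L^2(\p\O)$ passes the boundary condition to the limit: $u(x)\cdot n(x)=0$ on $\p\O$. Moreover $v\mapsto P_\O\big(\int_\O\nablaA v\,\dd x\big)$ is a bounded linear functional on $H^1_x$ (because $|\int_\O\nablaA v|\le|\O|^{1/2}\|\nabla v\|_{L^2_x}$ and $P_\O$ is an orthogonal projection), hence continuous along $u_n\to u$, so it vanishes at $u$; since $\nablaA u\equiv M$ this says $|\O|\,P_\O(M)=0$, i.e. $M\perp\mathcal{A}_\O$. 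On the other hand, $x\mapsto Mx+x_0$ being tangent to $\p\O$ forces, via \eqref{n_cover_all} (surjectivity of the Gauss map), first that $x_0\perp\ker M$ and then that the rotation axis may be taken through the origin, i.e. that $x\mapsto Mx$ itself is tangent to $\p\O$; thus $M\in\mathcal{A}_\O$ (equivalently $Mx\in\mathcal{R}_\O$, cf. \eqref{RO}). Therefore $M\in\mathcal{A}_\O\cap\mathcal{A}_\O^\perp=\{0\}$, so $u\equiv x_0$ is constant; a nonzero constant cannot be tangent to the boundary of a bounded domain whose normals fill $\S^2$, so $x_0=0$ and $u=0$, contradicting $\|u\|_{H^1_x}=1$.

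The delicate point, and the one I expect to require the most care, is the identification in the last paragraph of the limiting rigid motion with an element of the finite-dimensional space $\mathcal{R}_\O$: one must rule out rotations about an axis not through the origin, and this is exactly where \eqref{n_cover_all} enters (without such a hypothesis the inequality can genuinely fail). The remaining ingredients — the Ne\v{c}as inequality, Rellich–Kondrachov compactness, continuity of traces on $C^1$ domains, and the characterization of $\ker\nablaS$ as rigid motions — are standard and I would invoke them without proof.
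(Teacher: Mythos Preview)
The paper does not prove this statement; it is quoted from \cite{DV2}. Your compactness--contradiction outline follows that reference, and the first two paragraphs (the algebraic identity together with Ne\v{c}as giving Korn's second inequality, then Rellich extraction of a rigid limit $u(x)=Mx+x_0$ with $\nablaS u=0$ and $P_\O(M)=0$) are sound.

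The gap is exactly where you flag it: the claim that tangency of $Mx+x_0$ to $\p\O$ forces $M\in\mathcal{A}_\O$. This fails even under \eqref{n_cover_all} (which, incidentally, is a standing assumption of the paper but not a hypothesis of this theorem). Take $\O$ to be the ball of radius $\tfrac12$ about $c=(1,0,0)$, $M$ the infinitesimal rotation about $e_3$, and $x_0=-Mc=(0,-1,0)$. Then $u(x)=M(x-c)$ is tangent to $\p\O$ and $\nablaS u=0$, while with the paper's definition \eqref{RO} one computes $\mathcal{A}_\O=\{A\in A_3(\R):Ac=0\}$, a one-dimensional space orthogonal to $M$ in the Frobenius inner product, so $P_\O\big(\int_\O\nablaA u\,\dd x\big)=|\O|\,P_\O(M)=0$; yet $u\not\equiv0$. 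Thus your contradiction does not close, and in fact this $u$ violates the inequality as literally written with the restriction $x_0=0$ in \eqref{RO}. In \cite{DV2} the defect space is built from the antisymmetric parts of \emph{all} boundary-tangent rigid fields $Mx+x_0$, not only those with $x_0=0$; with that convention $M\in\mathcal{A}_\O$ holds for the limit by construction, $P_\O(M)=0$ then gives $M=0$, and boundedness of $\O$ forces the constant $x_0$ to vanish exactly as in your last sentence --- no appeal to \eqref{n_cover_all} is needed.
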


\hide
denotes the set of skew-symmetric $3\times 3$-matrices with real coefficients, as
well as the linear manifold of centered infinitesimal rigid displacement fields preserving $\O$:
\begin{align}
\mathcal{R}_\O =  \{R\in \mathcal{R}|b=0, R(x)\cdot n(x) = 0 \text{ for all } x\in \p\O\}.
\end{align}
\unhide

 Due to this result, it is natural to define the following Hilbert spaces:
\begin{equation}
\begin{split}\label{space}
   \mathcal{X} & := \{u:\O\to \mathbb{R}^3: u\in H^1_x, \ u \cdot n(x)=0 \text{ on }\p\O\} ,\\
  \mathcal{X}_0 & :=  \{ u:\O \to \mathbb{R}^3:  u\in H^1_x,  \ u\cdot n(x)=0 \text{ on }\p\O, \ {\color{black} P_\O\Big(  \int_{\O} \nablaA u  \dd x  \Big) = 0}\}.
\end{split}\end{equation}

\hide
\begin{remark}
In the case of unit disk $\O= \{x\in \mathbb{R}^2: |x|<1\}$. 
\[\begin{bmatrix}
    0 & -1 \\
    1 & 0
\end{bmatrix} \in \mathcal{A}_\O.
\]

\end{remark}

This Jacobian matrix is a sum of a symmetric matrix $\nablaS u$ (a.k.a. the deformation tensor) and an antisymmetric matrix $\nablaA u$:
\Be
\nabla u = \nablaS u + \nablaA u,\label{decom_U}
\Ee
where
\Be
\begin{split}
\nablaS u &: = \frac{1}{2} \left[ \nabla u + (\nabla u )^T\right]\\
\nabla^A U &: =  \frac{1}{2} \left[ \nabla U -(\nabla U )^T\right]
\end{split}
\Ee
\unhide

We have the following Poincaré inequality:
\begin{lemma}\label{Lemma: poincare}

For $u\in \mathcal{X}$, we have
\begin{align}
 \Vert u\Vert_{L^2_x}  & \lesssim _\Omega \Vert \nabla u\Vert_{L^2_x}. \label{poincare_ineq}
\end{align}

\end{lemma}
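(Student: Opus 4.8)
The statement to prove is the Poincaré inequality for $u \in \mathcal{X}$, namely $\|u\|_{L^2_x} \lesssim_\Omega \|\nabla u\|_{L^2_x}$.

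The plan is to argue by contradiction using a compactness argument. Suppose the inequality fails; then there is a sequence $u_m \in \mathcal{X}$ with $\|u_m\|_{L^2_x} = 1$ and $\|\nabla u_m\|_{L^2_x} \to 0$. Then $(u_m)$ is bounded in $H^1_x$, so by Rellich--Kondrachov compactness (since $\Omega$ is bounded with $C^1$ boundary) a subsequence converges strongly in $L^2_x$ to some $u_\infty$ with $\|u_\infty\|_{L^2_x} = 1$, and converges weakly in $H^1_x$. Since $\|\nabla u_m\|_{L^2_x} \to 0$, we get $\nabla u_\infty = 0$, hence $u_\infty$ is a constant vector field on $\Omega$ (using connectedness of $\Omega$, which I would note is implicit in the setup, or handle componentwise on connected components).

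Next I would use the boundary condition to kill the constant. Since $u_m \cdot n = 0$ on $\partial\Omega$ and the trace operator $H^1_x \to L^2(\partial\Omega)$ is continuous (and $u_m \to u_\infty$ in $H^1_x$ along the subsequence — actually strong $L^2$ plus weak $H^1$ gives strong $H^1$ here because $\|\nabla u_m\| \to 0 = \|\nabla u_\infty\|$), the trace of $u_\infty$ also satisfies $u_\infty \cdot n = 0$ on $\partial\Omega$. But $u_\infty \equiv c$ is constant, so $c \cdot n(x) = 0$ for all $x \in \partial\Omega$. By the standing assumption \eqref{n_cover_all} that $\{n(x) : x \in \partial\Omega\} = \mathbb{S}^2$, this forces $c \cdot \omega = 0$ for every $\omega \in \mathbb{S}^2$, hence $c = 0$. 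This contradicts $\|u_\infty\|_{L^2_x} = 1$, completing the proof.

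The main obstacle — or rather the point requiring the most care — is ensuring the boundary condition passes to the limit and that the full-sphere normal assumption \eqref{n_cover_all} is invoked correctly; without \eqref{n_cover_all} the inequality can genuinely fail (e.g. on a cylinder a constant axial field is tangent), so this hypothesis is essential and must be used. A secondary technical point is the connectedness of $\Omega$: the statement as phrased treats $\Omega$ as a single domain, but if one wants to be fully rigorous one should either assume connectedness or observe that the argument localizes to each connected component, where the full-sphere condition restricted to that component's boundary still does the job. Aside from these, the argument is the standard Rellich compactness scheme and requires no delicate estimates.

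\medskip

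Alternatively, one could give a non-compactness proof: since $u \cdot n = 0$ on $\partial\Omega$ and $\Omega$ is bounded, a direct application of Theorem~\ref{lemma:estimate_nabla} gives $K(\Omega)\|u\|_{H^1_x}^2 \le \|\nablaS u\|_{L^2_x}^2 + |P_\Omega(\int_\Omega \nablaA u\, \dd x)|^2 \lesssim \|\nabla u\|_{L^2_x}^2$, where the last bound uses $\|\nablaS u\|_{L^2_x} \le \|\nabla u\|_{L^2_x}$ and $|P_\Omega(\int_\Omega \nablaA u\,\dd x)| \lesssim_\Omega \|\nablaA u\|_{L^2_x} \le \|\nabla u\|_{L^2_x}$ by Cauchy--Schwarz on the bounded domain $\Omega$. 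Since $\|u\|_{L^2_x} \le \|u\|_{H^1_x}$, this immediately yields \eqref{poincare_ineq}. I would likely present this shorter route, as it reuses the Korn inequality already recorded as Theorem~\ref{lemma:estimate_nabla} and avoids re-deriving compactness, while the contradiction argument above serves as a conceptual backup.
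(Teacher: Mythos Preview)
Your contradiction-via-compactness argument is essentially identical to the paper's proof: the paper also takes a sequence $u_k$ with $\|u_k\|_{L^2_x}=1$ and $\|\nabla u_k\|_{L^2_x}\to 0$, extracts an $L^2$-limit by Rellich--Kondrachov, shows $\nabla u=0$ so $u$ is constant, upgrades to strong $H^1$ convergence and passes the tangency condition through the trace, and then invokes \eqref{n_cover_all} to force the constant to vanish. Your alternative route via Theorem~\ref{lemma:estimate_nabla} is a valid and pleasantly direct shortcut that the paper does not use.
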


\begin{proof}
Suppose~\eqref{poincare_ineq} is not correct, there exists a sequence $u_k\in \mathcal{X}$ such that
\begin{align}
    &   1 = \Vert u_k \Vert_{L^2_x} \geq k \Vert \nabla u_k\Vert_{L^2_x}. \label{H1_bdd}
\end{align}
By the Rellich-Kondrachov compactness theorem, $H^1_x\subset \subset L^2_x$ and hence, up to the extraction of subsequence, there exists $u_k \to u$ strongly in $L^2_x$. Then for $\phi\in C_c^\infty(\O)$, we have
\begin{align*}
    & \int_{\O} u \p_{x_i}\phi = \lim_{k\to \infty}\int_{\O} u_k \p_{x_i}\phi = -\lim_{k\to \infty} \int_{\O} \p_{x_i} u_k \phi =0,
\end{align*}
where we have used $\Vert \nabla u_k\Vert_{L^2_{x}} \rightarrow 0$ as $k \rightarrow \infty$. 

Thus $\nabla u = 0$ and $u=C$ is a constant. Thus we have $u\in H^1_x$ and by~\eqref{H1_bdd}
\begin{align}
  \Vert u_k - u \Vert_{H^1_x}& = \Vert u_k - u\Vert_{L^2_x} + \Vert \nabla u_k - \nabla u\Vert_{L^2_x} \notag\\
&= \Vert u_k - u\Vert_{L^2_x} + \Vert \nabla u_k \Vert_{L^2_x} \to 0.      \notag
\end{align}
By the trace theorem, we have
\begin{align*}
    &  \Vert u_k - u \Vert_{L^2(\p\O)} \to 0.
\end{align*}
Since $u_k \in \mathcal{X}$ we have $u_k \cdot n(x) |_{\p\O} = 0$ due to the strong convergence in $L^2 (\partial\Omega)$. Then we have $u\cdot n(x)|_{\p\O} = 0$. Since $u=C$, by $n(x)\cdot u = 0$ and~\eqref{n_cover_all} we conclude $u=0$, which contradicts to our initial assumption $\Vert u\Vert_{L^2_x} = 1$.\end{proof}

The following system of elliptic equations with a special boundary condition plays a pivotal role in our proof:
\begin{align}
  -\Div  (\nablaS u   )   = h \ \  &\text{in} \ \ \O , \notag
  \\
u\cdot n = 0  \ \ &\text{on} \ \ \p\O , \label{system}\\
  \nablaS u \,  n  = (\nablaS u : n \otimes n)n  \ \ &\text{on} \ \ \p\O . \notag 
\end{align}
Here, $M:N   := \sum_{ij} m_{ij} n_{ij}$ for $M= (m_{ij})$ and $n= (n_{ij})$.

{\color{black}
\begin{theorem}[Theorem 2.11 in \cite{Bernou}]\label{thm:symmetric_poisson} 

For any given $h\in L^2_x$, there exists a unique solution $u\in \mathcal{X}_0$ to the variational formulation of~\eqref{system}:
\begin{equation}\label{variational}
\int_{\O} \nablaS u : \nablaS v \dd x = \int_{\O} h\cdot v \dd x  \ \text{ for all } \  v\in \mathcal{X}_0.
\end{equation}
Furthermore, suppose $h$ satisfies the compatibility condition: 
\begin{equation}
 \int_{\O} Ax \cdot h(x)  \dd x = 0 \ \text{ for any } \ Ax\in \mathcal{R}_\O. \label{compatibility condition}
\end{equation}
Then the variational solution to~\eqref{variational} satisfies $u\in H^2_x$ with
\begin{equation*}
    \Vert u\Vert_{H^2_x} \lesssim \Vert h\Vert_{L^2_x},
\end{equation*}
and moreover $u$ satisfies~\eqref{system} a.e.

\end{theorem}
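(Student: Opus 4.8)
The plan is to establish Theorem~\ref{thm:symmetric_poisson} in three stages: existence and uniqueness of a variational solution, the $H^1_x$ bound, and then the elliptic regularity upgrade to $H^2_x$ under the compatibility condition. For the first stage, I would apply the Lax--Milgram theorem on the Hilbert space $\mathcal{X}_0$ defined in~\eqref{space}. The bilinear form is $B(u,v) := \int_\O \nablaS u : \nablaS v \, \dd x$ and the linear functional is $v \mapsto \int_\O h \cdot v \, \dd x$. Continuity of $B$ is immediate from Cauchy--Schwarz, and continuity of the functional follows from the Poincar\'e inequality of Lemma~\ref{Lemma: poincare} (which bounds $\|v\|_{L^2_x}$ by $\|\nabla v\|_{L^2_x}$, hence by $\|v\|_{H^1_x}$). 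The crucial point is coercivity: I need $B(u,u) = \|\nablaS u\|_{L^2_x}^2 \gtrsim \|u\|_{H^1_x}^2$ for $u \in \mathcal{X}_0$. This is exactly where Korn's inequality (Theorem~\ref{lemma:estimate_nabla}) enters: for $u \in \mathcal{X}_0$ the term $P_\O\big(\int_\O \nablaA u\, \dd x\big)$ vanishes by definition of $\mathcal{X}_0$, so Korn gives $K(\O)\|u\|_{H^1_x}^2 \le \|\nablaS u\|_{L^2_x}^2$. Lax--Milgram then yields a unique $u \in \mathcal{X}_0$ solving~\eqref{variational}, together with the bound $\|u\|_{H^1_x} \lesssim \|h\|_{L^2_x}$, settling the second stage simultaneously.

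For the regularity upgrade, assume now the compatibility condition~\eqref{compatibility condition}. The first observation is that the variational identity~\eqref{variational}, a priori only tested against $v \in \mathcal{X}_0$, in fact holds for all $v \in \mathcal{X}$: any $v \in \mathcal{X}$ decomposes as $v = v_0 + R$ with $v_0 \in \mathcal{X}_0$ and $R(x) = Ax \in \mathcal{R}_\O$; since $\nablaS R = 0$ the left side is unchanged, and the right side picks up $\int_\O h \cdot R\, \dd x = 0$ precisely by~\eqref{compatibility condition}. Having the identity for all test fields tangent to the boundary, I would integrate by parts (formally, justified by a density/approximation argument) to read off that $-\Div(\nablaS u) = h$ holds in the distributional sense in $\O$ and that the natural boundary condition $\nablaS u\, n = (\nablaS u : n \otimes n)n$ on $\p\O$ is the variational boundary condition compatible with the constraint $v \cdot n = 0$. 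Then I would invoke the Agmon--Douglis--Nirenberg elliptic regularity theory: using~\eqref{div_Delta}, the principal part of $-\Div(\nablaS u)$ is $-\tfrac12(\Delta u + \nabla \Div u)$, an elliptic system, and one checks the complementing (Lopatinski--Shapiro) condition for the boundary operators $u \cdot n$ and the tangential part of $\nablaS u\, n$ — this is the content referenced as Lemma~\ref{lemma:complementing} in the excerpt. ADN theory then gives $\|u\|_{H^2_x} \lesssim \|h\|_{L^2_x} + \|u\|_{L^2_x}$, and the lower-order term is absorbed by the $H^1_x$ bound already established. Finally, with $u \in H^2_x$ the integration by parts is rigorous, so $u$ satisfies~\eqref{system} almost everywhere.

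I expect the main obstacle to be the verification of the complementing condition for the boundary-value problem~\eqref{system} — this is the one genuinely technical ingredient, since the boundary operators are a somewhat unusual mix (the scalar Dirichlet-type condition $u\cdot n = 0$ together with the tangential stress condition $\nablaS u\, n - (\nablaS u : n\otimes n)n = 0$), and one must check that the associated ODE system on the half-line, after freezing coefficients and Fourier-transforming in the tangential variables, admits a unique decaying solution for every nonzero tangential frequency. A secondary technical point is the careful justification of the integration by parts and of the claim that $\mathcal{X}_0$ plus rigid motions exhausts $\mathcal{X}$ in a way compatible with the trace constraint; this uses that $\mathcal{R}_\O$ is finite-dimensional and that the projection $P_\O$ in Korn's inequality is exactly the one appearing in the definition of $\mathcal{X}_0$. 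Since the statement attributes the result to Theorem 2.11 of~\cite{Bernou}, I would cite that source for the complementing-condition computation and present the Lax--Milgram/Korn argument in detail, as that is the part most directly reused in the sequel.
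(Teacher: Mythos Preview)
Your proposal is correct and matches the approach the paper sketches in Remark~\ref{remark:sym_poisson}: Lax--Milgram on $\mathcal{X}_0$ with coercivity from Korn's inequality (Theorem~\ref{lemma:estimate_nabla}), extension of the variational identity to all of $\mathcal{X}$ via the compatibility condition, and elliptic regularity for the $H^2_x$ upgrade; the paper itself does not give a detailed proof but cites Theorem~2.11 of~\cite{Bernou}. One small correction: the complementing-condition verification you propose to cite from~\cite{Bernou} is in fact carried out directly in the present paper as Lemma~\ref{lemma:complementing}.
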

{\color{black}
\begin{remark}\label{remark:sym_poisson} To achieve the $H^2_x$ estimate and the second boundary condition in~\eqref{system}, it is crucial to use $  -\Div  (\nablaS u   )   = h $, {\color{black}which is a variant of the elliptic system $ - \Delta u= -\Div  (\nabla u   )   = h$ in \cite{YZ}}. With the first boundary condition in \eqref{system} and the Poincaré inequality from Lemma \ref{Lemma: poincare}, we can establish a unique weak solution using the Lax-Milgram theorem:
\begin{equation*}
\int_{\O} \nabla u :\nabla v \dd x = \int_{\O} h\cdot v\dd x \text{ for any } v\in \mathcal{X}.    
\end{equation*}
However, such elliptic system employed in the test function method is not applicable to the non-flat boundaries, as demonstrated in the proof of Lemma \ref{lemma:l6_b}.

When the domain is axisymmetric \eqref{def:axiss}, we expect a compatibility condition. Such condition arises from Korn's inequality in Theorem \ref{lemma:estimate_nabla}, where we require $|P_\Omega\big(\int_{\O}\nablaA u \dd x\big)| = 0$ to establish a coercive estimate. Consequently, the unique variational solution~\eqref{variational} can be obtained by combining the Lax-Milgram theorem and the Korn's inequality from Theorem \ref{lemma:estimate_nabla}.

For any $v\in \mathcal{X}$ defined in~\eqref{space}, $v - P_\Omega\big(\int_{\mathbb{R}^3}\nablaA v  \dd x\big)x \in \mathcal{X}_0$ . From the additional compatibility condition $\int_{\Omega} h(x)Ax \dd x=0$, substituting $v \to v - P_\Omega\big(\int_{\mathbb{R}^3}\nablaA v  \dd x\big)x$ into \eqref{variational}, the unique solution $u\in \mathcal{X}_0$ further satisfies the following relation:
\begin{align*}
\int_{\Omega} \nablaS u : \nablaS v \dd x = \int_{\Omega} h \cdot v \dd x \text{ for any } v\in \mathcal{X}.
\end{align*}

Moreover, let $h=b$, the momentum, for the axisymmetric domain, we have the preservation of angular momentum~\eqref{angular_momentum_preserve}. We anticipate that this system possesses a unique solution in $H^2_x$ when the angular momentum is zero, which is equivalent to the compatibility condition $\int_{\Omega} Ax\cdot b(x) \dd x = 0$.

\end{remark}

}

\subsection{Elliptic estimate in $W^{2,p}_x$}  We collect preliminary results from Agmon-Douglis-Nirenberg \cite{ADN}.

\begin{lemma}\label{lemma:complementing}
The problem of the elliptic system and boundary conditions of ~\eqref{system} satisfies the complementing condition of Agmon-Douglis-Nirenberg~\cite{ADN}. Thus if $u$ solves \eqref{system} for $h\in L^p_x$ with $1< p< \infty$, by Theorem 10.5 in~\cite{ADN}, we have
\begin{align*}
   \Vert u\Vert_{W^{2,p}_x} &   \lesssim \Vert h\Vert_{L^p_x} + \Vert u\Vert_{L^p_x}.
\end{align*}

\end{lemma}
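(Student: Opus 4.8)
The plan is to verify the Agmon--Douglis--Nirenberg (ADN) complementing condition for the system \eqref{system} directly from the definitions in \cite{ADN}, and then invoke their general a priori estimate (Theorem 10.5 there) as a black box. Write the system in the form $\sum_j l_{ij}(\partial) u_j = h_i$ with $l_{ij}(\xi) = -\tfrac12(\delta_{ij}|\xi|^2 + \xi_i\xi_j)$ coming from $\Div(\nablaS u) = \tfrac12(\Delta u + \nabla\Div u)$ (identity \eqref{div_Delta}); this is a second-order, properly elliptic, homogeneous, diagonal-weight system with ADN weights $s_i = 0$, $t_j = 2$. First I would check proper ellipticity: for $\xi,\eta$ real linearly independent, $\det(l_{ij}(\xi + z\eta))$ as a polynomial in $z$ must have exactly $m=n=3$ roots with positive imaginary part. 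Here $\det\big(l_{ij}(\zeta)\big) = (-\tfrac12)^3 \det\big(|\zeta|^2 I + \zeta\otimes\zeta\big) = (-\tfrac12)^3 |\zeta|^4 (|\zeta|^2 + |\zeta|^2) = -\tfrac14 |\zeta|^6$ using $\det(I + \zeta\otimes\zeta/|\zeta|^2)=2$, so with $\zeta = \xi + z\eta$ the characteristic polynomial is (a constant times) $(|\xi+z\eta|^2)^3$, which has the single root $z^+$ with $\mathrm{Im}\,z^+>0$ coming from $|\xi + z\eta|^2 = 0$, with multiplicity $3$. Good: $L^+(\xi,\eta;z) = c\,(z - z^+)^3$.

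Next, the complementing condition at a boundary point $x_0 \in \p\O$: fix the tangent plane, let $n$ be the inner (or outer) normal and $\xi\neq 0$ a cotangent vector. One must show that the rows of the boundary operator $B(\partial)$, reduced modulo $L^+(\xi, n; z)$, are linearly independent as polynomials in $z$ over $\mathbb{C}$, where $B$ acts on the ``adjugate-transformed'' solution. Concretely, the boundary conditions are $B_0 u := u\cdot n = 0$ (order $\beta_0 = 0$... actually with ADN bookkeeping $u\cdot n$ has order $0$ while $t_j - 1 = 1$ so the relevant weight is chosen accordingly) and the three tangential components of $(\nablaS u)n - (n^T\nablaS u\, n)\, n = 0$, i.e. the projection onto the tangent plane of $(\nablaS u)n$. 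The recipe: form $M(\xi,z) := \mathrm{adj}\big(l(\xi + zn)\big)$, compute $B(\xi + zn) M(\xi, z)$, reduce each entry mod $(z - z^+)^3$, and check the resulting matrix has rank $4$... but wait, there are $3$ unknowns and we expect $m=3$ boundary conditions, yet \eqref{system} lists $1 + 2 = 3$ scalar conditions (the normal component, plus two independent tangential components of the $\nablaS u\, n$ equation, since the normal component of that equation is automatically zero by construction). So there are exactly $3$ boundary conditions, matching $m=3$, and the complementing condition is that these $3$ polynomial rows (mod $(z-z^+)^3$) are linearly independent. I would do this computation in the rotated frame $n = e_3$, $\xi = |\xi| e_1$, where $l(\xi + z e_3)$ and its adjugate become explicit $3\times 3$ matrices in $z$, the double root being $z^+ = i|\xi|$; then the linear independence reduces to a concrete determinant of a $3\times 3$ matrix of quadratic polynomials in $z$ being nonzero mod $(z - i|\xi|)^3$, which I expect to follow from a short direct evaluation (this mirrors the Neumann/traction problem for the Lam\'e system, which is classically known to satisfy the complementing condition).

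Once proper ellipticity and the complementing condition are established, the conclusion is immediate: Theorem 10.5 of \cite{ADN} gives, for $u$ solving \eqref{system} with $h \in L^p_x$, $1<p<\infty$, the estimate $\|u\|_{W^{2,p}_x} \lesssim \|h\|_{L^p_x} + \|u\|_{L^p_x}$, where the lower-order term on the right is the standard artifact of ADN estimates on bounded domains (it cannot be dropped a priori because the homogeneous problem may have a nontrivial kernel — indeed the rigid motions $\mathcal{R}_\O$ — which is exactly why the separate $L^{6/5}_x$-control in Lemma \ref{lemma:elliptic} is needed). The main obstacle is the complementing-condition verification: getting the ADN weight/order bookkeeping right for the mixed-order boundary system (one zeroth-order condition $u\cdot n$ and two first-order conditions from $\nablaS u\, n$) and then carrying out the adjugate-times-boundary-symbol computation mod $(z - i|\xi|)^3$ without error. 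Everything else — proper ellipticity, and quoting \cite{ADN} Theorem 10.5 — is routine.
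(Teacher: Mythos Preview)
Your approach is essentially the same as the paper's: write the principal symbol $l_{ij}(\xi)$, check proper ellipticity via $\det l(\xi) \propto |\xi|^6$ so that $M^+(\Lambda,\tau) = (\tau - i|\Lambda|)^3$, then verify the complementing condition by showing that the three boundary symbols (one from $u\cdot n$, two independent tangential components of $(\nablaS u)n$) applied to the adjugate $L(\Lambda+\tau n)$ are linearly independent modulo $M^+$, and finally quote Theorem~10.5 of \cite{ADN}. The paper does exactly this, with the one difference that it carries out the entire adjugate-times-boundary-symbol computation in a \emph{general} frame (arbitrary $n$ and tangent $\Lambda$), leading to several pages of explicit entries $M_{ij}$ and a final determinant $384|\Lambda|^8 n_3(\tau - i|\Lambda|)^3$, whereas you propose rotating to $n=e_3$, $\xi=|\xi|e_1$. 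Your rotation is legitimate (the complementing condition is invariant under orthogonal changes of coordinates) and would cut the calculation down considerably; the paper's general-frame computation buys nothing extra.

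The only caveat is that you have not actually carried out the determinant evaluation --- you write that it ``should follow from a short direct evaluation'' by analogy with the Lam\'e traction problem. That analogy is apt but not a proof; the system here is the $\lambda=\mu$ special case of Lam\'e, and the boundary conditions are a mix of one Dirichlet-type and two traction-type components, so the complementing condition is not literally covered by the classical Lam\'e references and does require the explicit check. In your rotated frame this is genuinely short, but you should write it out.
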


\begin{proof}
First we express the symmetric Poisson system in~\eqref{system} into a matrix form, which reads
\begin{align*}
    &  \sum_{i,j=1}^3 l_{ij}(\p) u_j = 2h_i,
\end{align*}
where, for real $\xi\in \mathbb{R}^3$ and $\xi\neq 0$, the matrix $l=(l_{ij})$ reads
\begin{align*}
    & l(\xi) = -\begin{bmatrix}
        |\xi|^2 + \xi_1^2 & \xi_1\xi_2 & \xi_1\xi_3 \\
        \xi_1 \xi_2 & |\xi|^2 + \xi_2^2 & \xi_2 \xi_3 \\
        \xi_1 \xi_3 & \xi_2 \xi_3 & |\xi|^2 + \xi_3^2
    \end{bmatrix}.
\end{align*}
Direct computation leads to
\begin{align*}
   D(\xi) & = \det(l(\xi)) = -2|\xi|^6.  
\end{align*}

Let $n$ denote the normal vector and $\Lambda\neq 0$ any tangent vector. Then $D(\Lambda + \tau n)$ has 3 roots in $\tau$:
\begin{align*}
    & \tau_1 = \tau_2 = \tau_3 = i|\Lambda|.
\end{align*}
We denote
\begin{align}
    &   M^+(\Lambda,\tau) = (\tau-i|\Lambda|)^3. \label{M+}
\end{align}

The adjoint matrix of $l(\xi)$ is
\begin{align}
 L(\xi) =   (L_{ij}(\xi)) = &  -  \begin{bmatrix}
        |\xi|^4 + (\xi_2^2+\xi_3^2)|\xi|^2 &  -\xi_1 \xi_2 |\xi|^2  & -\xi_1 \xi_3 |\xi|^2\\
        -\xi_1\xi_2 |\xi|^2 &  |\xi|^4 + (\xi_1^2 +\xi_3^2)|\xi|^2 & -\xi_2\xi_3 |\xi|^2\\
        -\xi_1 \xi_3 |\xi|^2 &  -\xi_2\xi_3 |\xi|^2 & |\xi|^4 + (\xi_1^2 +\xi_2^2 )|\xi|^2
    \end{bmatrix}. \label{adjoint_L}
\end{align}

Next we express boundary operator in~\eqref{system} into a matrix form, which reads
\begin{align*}
  B(\xi)  & = (B_{ij}(\xi)) = \begin{bmatrix}
    \xi\cdot n + \xi_1 n_1 - 2n_1^2 \xi\cdot n  & n_2\xi_1 - 2n_1n_2 \xi \cdot n & n_3\xi_1 - 2n_1n_3\xi \cdot n  \\
    n_1\xi_2 - 2n_2n_1 \xi \cdot n  & \xi\cdot n + \xi_2 n_2 - 2n_2^2 \xi \cdot n & n_3 \xi_2 - 2 n_2 n_3 \xi \cdot n  \\
    n_1\xi_3 - 2n_3n_1 \xi \cdot n  & n_2\xi_3 - 2n_3n_2 \xi \cdot n & \xi \cdot n + \xi_3 n_3 - 2n_3^2 \xi \cdot n  \\
     n_1 &n_2 &n_3 
  \end{bmatrix}   .
\end{align*}
Since $\Lambda \cdot n=0$, we have $(\Lambda + \tau n)\cdot n = \tau$, 
$(\Lambda+\tau n)_i n_j = \Lambda_i n_j + \tau n_i n_j, $ thus
\begin{align*}
    &B(\Lambda + \tau n) = (B_{ij}(\Lambda + \tau n)) = \begin{bmatrix}
        \tau(1-n_1^2) + \Lambda_1 n_1 & \Lambda_1 n_2 -\tau n_1 n_2 &  \Lambda_1 n_3-\tau n_1 n_3 \\
        \Lambda_2 n_1-\tau n_1 n_2& \tau (1-n_2^2) + \Lambda_2 n_2 &  \Lambda_2 n_3-\tau n_2n_3 \\
      \Lambda_3 n_1-\tau n_1n_3 & \Lambda_3 n_2-\tau n_2 n_3 &  \tau (1-n_3^2) + \Lambda_3 n_3\\  
        n_1& n_2 & n_3
    \end{bmatrix}.
\end{align*}
By direct computation, the determinant of the submatrix that consists of first three rows in $B(\Lambda+ \tau n)$ is $0$, thus the rank of the first three rows is $2$.

Without loss of generality, we assume the first two rows are linearly independent, then we claim $n_3\neq 0$. Suppose $n_3= 0$ in such case, we have $n_1^2 + n_2^2 = 1$, $\Lambda_1 n_1 + \Lambda_2 n_2 = 0$. The first two rows become
\begin{align*}
    &  \begin{bmatrix}
        \tau n_2^2 - \Lambda_2 n_2 & \Lambda_1 n_2 - \tau n_1n_2 & 0 \\
        \Lambda_2 n_1 - \tau n_1n_2 &  \tau n_1^2 - \Lambda_1 n_1 & 0 
    \end{bmatrix}.\,
\end{align*}
These two row vectors further become $n_2(\tau n_2 - \Lambda_2, \Lambda_1-\tau n_1)$ and $n_1(\Lambda_2 - \tau n_2,\tau n_1 - \Lambda_1)$, which are linearly dependent. Thus we conclude $n_3\neq 0$ by contradiction. Then we rewrite the matrix $B$ as
\begin{align}
    &\mathfrak{B}(\Lambda + \tau n)  = \begin{bmatrix}
        \tau(1-n_1^2) + \Lambda_1 n_1 & \Lambda_1n_2-\tau n_1 n_2 &  \Lambda_1n_3 -\tau n_1 n_3 \\
        \Lambda_2 n_1-\tau n_1 n_2& \tau (1-n_2^2)+\Lambda_2 n_2 &  \Lambda_2 n_3-\tau n_2n_3 \\
        n_1& n_2 & n_3
    \end{bmatrix}. \label{matrix_bdr}
\end{align}

Next we verify the complementing boundary condition by computing the matrix multiplication with modulo $M^{+}(\Lambda, \tau)$ defined in~\eqref{M+}:
\begin{align}
    & \mathfrak{B}(\Lambda + \tau n) L(\Lambda + \tau n) \mod  \ M^+(\Lambda, \tau)  .\label{matrix_B_times_L}
\end{align}
We first compute $L(\Lambda + \tau n) \mod \  M^+(\Lambda,\tau)$. Note the elements in~\eqref{adjoint_L} consist of $|\xi|^4$ and $\xi_j\xi_k|\xi|^2$. By direct computation, we compute these two terms as
\begin{align}
    & |\Lambda + \tau n|^4 \mod (\tau - i|\Lambda|)^3 = -4 |\Lambda|^2 (\tau - i|\Lambda|)^2,  \label{L_mod_1} 
\end{align}
\begin{align}
    &  (\Lambda_j + \tau n_j)(\Lambda_k + \tau n_k)|\Lambda + \tau n|^2 \mod (\tau - i |\Lambda|)^3 \notag\\
    & = (\tau - i|\Lambda|)\Big[(\Lambda_j \Lambda_k + 3i|\Lambda|\Lambda_j n_k + 3i |\Lambda| \Lambda_k n_j - 5n_jn_k|\Lambda|^2)\tau  \notag\\
    &+ i \Lambda_j \Lambda_k |\Lambda| + n_j\Lambda_k |\Lambda|^2 + \Lambda_j n_k |\Lambda|^2 +3in_jn_k|\Lambda|^3\Big] \notag .
\end{align}
We denote
\begin{equation}\label{L_mod_2}
\begin{split}
   \mathcal{B}_{jk} :=    &  (\Lambda_j \Lambda_k + 3i|\Lambda|\Lambda_j n_k + 3i |\Lambda| \Lambda_k n_j - 5n_jn_k|\Lambda|^2)\tau \\
       &+ i \Lambda_j \Lambda_k |\Lambda| + n_j\Lambda_k |\Lambda|^2 + \Lambda_j n_k |\Lambda|^2 +3in_jn_k|\Lambda|^3.
\end{split}
\end{equation}

Then we have
\begin{align}
    & L(\Lambda + \tau n) \mod \ \  M^+(\Lambda,\tau)   \notag\\
    & = -(\tau-i|\Lambda|)\begin{bmatrix}
      -8|\Lambda|^2(\tau - i|\Lambda|)-\mathcal{B}_{11}  & -\mathcal{B}_{12}  &  -\mathcal{B}_{13}  \\
      -\mathcal{B}_{12}  &  -8|\Lambda|^2(\tau - i|\Lambda|)-\mathcal{B}_{22} & -\mathcal{B}_{23}  \\
       -\mathcal{B}_{13} & -\mathcal{B}_{23}  &-8|\Lambda|^2(\tau - i|\Lambda|)-\mathcal{B}_{33}
    \end{bmatrix}. \label{matrix_l_mod}
\end{align}

To compute the matrix multiplication~\eqref{matrix_B_times_L}, due to the extra $\tau$ factor in~\eqref{matrix_bdr}, we apply~\eqref{L_mod_1} and~\eqref{L_mod_2} to further compute
\begin{align*}
    &\tau |\Lambda + \tau n|^4 \mod (\tau - i|\Lambda|)^3 \\
    &= -4|\Lambda|^2 (\tau - i|\Lambda|)^2 \tau \mod (\tau - i|\Lambda|)^3 = -4i|\Lambda|^3 (\tau - i|\Lambda|)^2.
\end{align*}

\begin{align}
    & \tau (\Lambda_j +\tau n_j)(\Lambda_k + \tau n_k)|\Lambda + \tau n|^2 \mod (\tau - i|\Lambda|^3) \notag \\
    & = (\tau - i |\Lambda|)\tau \mathcal{B}_{jk} \mod (\tau - i|\Lambda|^3) \notag \\
    & = (\tau - i|\Lambda|) \Big[(3i|\Lambda|\Lambda_j \Lambda_k - 5n_j \Lambda_k |\Lambda|^2 - 5\Lambda_j n_k |\Lambda|^2 - 7in_jn_k |\Lambda|^3)\tau \notag \\
    & + \Lambda_j \Lambda_k |\Lambda|^2 + 3i|\Lambda|^3 n_j\Lambda_k + 3i|\Lambda|^3 \Lambda_j n_k -5n_jn_k |\Lambda|^4\Big] \notag .
\end{align}
We denote
\begin{equation*}
\begin{split}
  \mathcal{C}_{jk}:=   &  (3i|\Lambda|\Lambda_j \Lambda_k - 5n_j \Lambda_k |\Lambda|^2 - 5\Lambda_j n_k |\Lambda|^2 - 7in_jn_k |\Lambda|^3)\tau \\
  & + \Lambda_j \Lambda_k |\Lambda|^2 + 3i|\Lambda|^3 n_j\Lambda_k + 3i|\Lambda|^3 \Lambda_j n_k -5n_jn_k |\Lambda|^4.
\end{split}
\end{equation*}

From the above computation, \eqref{matrix_B_times_L} reads
\begin{align*}
    & \mathfrak{B}(\Lambda + \tau n) L(\Lambda + \tau n) \mod (\tau - i|\Lambda|)^3  \\
    &  = \mathfrak{B}(\Lambda+\tau n) \eqref{matrix_l_mod}= -(\tau - i |\Lambda| ) M(\Lambda + \tau n),
\end{align*}
where the elements in the matrix $M=(M_{ij})$ reads
\begin{align*}
  M_{11}  & =  (1-n_1^2)\big[-8i|\Lambda|^3 (\tau - i|\Lambda|) 
 - \mathcal{C}_{11} \big] +n_1n_2 \mathcal{C}_{12} + n_1n_3 \mathcal{C}_{13} \\
 & - 8\Lambda_1 n_1 |\Lambda|^2(\tau - i|\Lambda|) - \Lambda_1 n_1 \mathcal{B}_{11} - \Lambda_1 n_2 \mathcal{B}_{12} - \Lambda_1 n_3 \mathcal{B}_{13} \\
 & = -8i|\Lambda|^3(1-n_1^2) (\tau - i|\Lambda|) + (5n_1 \Lambda_1 |\Lambda|^2 - 3i|\Lambda| \Lambda_1^2)\tau - \Lambda_1^2 |\Lambda|^2 - 3i n_1 \Lambda_1 |\Lambda|^3  \\
 & - 8\Lambda_1 n_1 |\Lambda|^2(\tau - i|\Lambda|) - (3i|\Lambda| \Lambda_1^2 - 5n_1 \Lambda_1 |\Lambda|^2) \tau - \Lambda_1^2 |\Lambda|^2 - 3in_1 \Lambda_1 |\Lambda|^3  \\
 & = -8i|\Lambda|^3(1-n_1^2) (\tau - i|\Lambda|) - (6i|\Lambda|\Lambda_1^2 - 2n_1\Lambda_1 |\Lambda|^2)\tau  - 2\Lambda_1^2 |\Lambda|^2 + 2in_1\Lambda_1 |\Lambda|^3.
\end{align*}

\begin{align*}
   M_{12} &  = -(1-n_1^2)\mathcal{C}_{12} + n_1n_2 \mathcal{C}_{22} + n_1n_3 \mathcal{C}_{23} + 8i |\Lambda|^3 n_1n_2 (\tau - i |\Lambda|) \\
   &-8\Lambda_1n_2 |\Lambda|^2 (\tau - i|\Lambda|) - \Lambda_1 n_1 \mathcal{B}_{12} - \Lambda_1 n_2\mathcal{B}_{22} - \Lambda_1 n_3 \mathcal{B}_{23} \\
   & = 8i|\Lambda|^3 n_1n_2 (\tau - i|\Lambda |)  + (5\Lambda_1 n_2|\Lambda|^2-3i|\Lambda|\Lambda_1 \Lambda_2)\tau - \Lambda_1 \Lambda_2 |\Lambda|^2- 3i |\Lambda|^3 n_2 \Lambda_1 \\
   & -8\Lambda_1n_2 |\Lambda|^2 (\tau - i|\Lambda|) - (3i|\Lambda|\Lambda_1 \Lambda_2 - 5n_2\Lambda_1 |\Lambda|^2)\tau  -\Lambda_1 \Lambda_2 |\Lambda|^2 - 3in_2\Lambda_1 |\Lambda|^3 \\ 
   & = 8i|\Lambda|^3 n_1n_2 (\tau - i|\Lambda |)  -(6i|\Lambda|\Lambda_1\Lambda_2 - 2n_2 \Lambda_1 |\Lambda|^2)\tau - 2\Lambda_1 \Lambda_2 |\Lambda|^2 + 2i |\Lambda|^3 n_2 \Lambda_1        .  
\end{align*}

\begin{align*}
 M_{13}   & = -(1-n_1^2)\mathcal{C}_{13}  + n_1n_2 \mathcal{C}_{23} + n_1n_3 \mathcal{C}_{33}+8i|\Lambda|^3 n_1n_3 (\tau - i|\Lambda|)  \\
 & - 8\Lambda_1 n_3 |\Lambda|^2 (\tau - i|\Lambda|) - n_1\Lambda_1 \mathcal{B}_{13} - \Lambda_1 n_2 \mathcal{B}_{23} - \Lambda_1 n_3 \mathcal{B}_{33}\\
 & = 8i|\Lambda|^3 n_1n_3 (\tau - i|\Lambda|) + (5\Lambda_1 n_3|\Lambda|^2-3i|\Lambda|\Lambda_1 \Lambda_3)\tau - \Lambda_1 \Lambda_3 |\Lambda|^2- 3i |\Lambda|^3 n_3 \Lambda_1 \\
 & - 8\Lambda_1 n_3 |\Lambda|^2 (\tau - i|\Lambda|) - (3i|\Lambda|\Lambda_1 \Lambda_3 - 5n_3 \Lambda_1 |\Lambda|^2)\tau - \Lambda_1 \Lambda_3 |\Lambda|^2 - 3in_3\Lambda_1 |\Lambda|^3 \\
 & = 8i|\Lambda|^3 n_1n_3 (\tau - i|\Lambda|) - (6i|\Lambda|\Lambda_1 \Lambda_3 - 2n_3 \Lambda_1 |\Lambda|^2)\tau - 2\Lambda_1 \Lambda_3 |\Lambda|^2 +2 i |\Lambda|^3 n_3 \Lambda_1 .  
\end{align*}

\begin{align*}
   M_{21} &    =-(1-n_2^2)\mathcal{C}_{12} + n_1n_2 \mathcal{C}_{11} + n_2n_3 \mathcal{C}_{13} + 8i |\Lambda|^3 n_1n_2 (\tau - i |\Lambda|) \\
   & -8\Lambda_2 n_1 |\Lambda|^2 (\tau - i|\Lambda|) - \Lambda_2 n_1 \mathcal{B}_{11} - \Lambda_2 n_2 \mathcal{B}_{12} - \Lambda_2 n_3 \mathcal{B}_{13}\\
   & = 8i|\Lambda|^3 n_1n_2 (\tau - i|\Lambda |)  + (5\Lambda_2 n_1|\Lambda|^2-3i|\Lambda|\Lambda_1 \Lambda_2)\tau - \Lambda_1 \Lambda_2 |\Lambda|^2- 3i |\Lambda|^3 n_1 \Lambda_2 \\
   & - 8\Lambda_2 n_1 |\Lambda|^2 (\tau - i|\Lambda|) - (3i |\Lambda|\Lambda_1 \Lambda_2 - 5n_1\Lambda_2 |\Lambda|^2)\tau - \Lambda_1 \Lambda_2 |\Lambda|^2 - 3in_1\Lambda_2 |\Lambda|^3 \\
   & = 8i|\Lambda|^3 n_1n_2 (\tau - i|\Lambda |) -(6i|\Lambda|\Lambda_1\Lambda_2-2\Lambda_2 n_1 |\Lambda|^2)\tau - 2\Lambda_1 \Lambda_2 |\Lambda|^2 + 2in_1\Lambda_2 |\Lambda|^3.  
\end{align*}

\begin{align*}
  M_{22}  & =  (1-n_2^2)\big[-8i|\Lambda|^3 (\tau - i|\Lambda|) 
 - \mathcal{C}_{22} \big] +n_1n_2 \mathcal{C}_{12} + n_2n_3 \mathcal{C}_{23} \\
 & - 8 \Lambda_2 n_2 |\Lambda|^2 (\tau - i|\Lambda|) - \Lambda_2 n_1\mathcal{B}_{12}-\Lambda_2 n_2 \mathcal{B}_{22} - \Lambda_2 n_3 \mathcal{B}_{23} \\
 & = -8i|\Lambda|^3(1-n_2^2) (\tau - i|\Lambda|) + (5n_2 \Lambda_2 |\Lambda|^2 - 3i|\Lambda| \Lambda_2^2)\tau - \Lambda_2^2 |\Lambda|^2 - 3i n_2 \Lambda_2 |\Lambda|^3 \\
& -8 \Lambda_2 n_2 |\Lambda|^2 (\tau - i|\Lambda|)   -  (3i|\Lambda|\Lambda_2^2 - 5n_2\Lambda_2 |\Lambda|^2)\tau - \Lambda_2^2 |\Lambda|^2 -3in_2\Lambda_2 |\Lambda|^3 \\
& = -8i|\Lambda|^3(1-n_2^2) (\tau - i|\Lambda|) -(6i|\Lambda| \Lambda_2^2-2n_2\Lambda_2 |\Lambda|^2)\tau - 2\Lambda_2^2 |\Lambda|^2 + 2i n_2 \Lambda_2 |\Lambda|^3.
\end{align*}

\begin{align*}
  M_{23}  & = -(1-n_2^2)\mathcal{C}_{23} + n_1n_2\mathcal{C}_{13} + n_2n_3 \mathcal{C}_{33} + 8i |\Lambda|^3 n_2n_3 (\tau - i|\Lambda|) \\
  & -8\Lambda_2 n_3 |\Lambda|^2 (\tau - i|\Lambda|) - \Lambda_2 n_1 \mathcal{B}_{13} - \Lambda_2 n_2 \mathcal{B}_{23} - \Lambda_2 n_3 \mathcal{B}_{33} \\
  & = 8i |\Lambda|^3 n_2n_3 (\tau - i|\Lambda|) + (5\Lambda_2 n_3|\Lambda|^2-3i|\Lambda|\Lambda_3 \Lambda_2)\tau - \Lambda_3 \Lambda_2 |\Lambda|^2- 3i |\Lambda|^3 n_3 \Lambda_2 \\
  &-8\Lambda_2 n_3 |\Lambda|^2 (\tau - i|\Lambda|) - (3i|\Lambda|\Lambda_3 \Lambda_2 - 5n_3 \Lambda_2 |\Lambda|^2)\tau - \Lambda_2 \Lambda_3 |\Lambda|^2 - 3in_3 \Lambda_2 |\Lambda|^3 \\
  & = -8\Lambda_2 n_3 |\Lambda|^2 (\tau - i|\Lambda|) -(6i|\Lambda|\Lambda_3 \Lambda_2 - 2\Lambda_2 n_3 |\Lambda|^2)\tau - 2\Lambda_2 \Lambda_3 |\Lambda|^2 + 2in_3 \Lambda_2 |\Lambda|^3.
\end{align*}

\begin{align*}
  M_{31}  &  = -8n_1|\Lambda|^2 (\tau - i |\Lambda|)-n_1\mathcal{B}_{11} - n_2 \mathcal{B}_{12} - n_3 \mathcal{B}_{13} \\
  & = -8n_1|\Lambda|^2 (\tau - i |\Lambda|) - \tau(3i|\Lambda|\Lambda_1 - 5|\Lambda|^2 n_1) - \Lambda_1 |\Lambda|^2 - 3i|\Lambda|^3 n_1.
\end{align*}

\begin{align*}
  M_{32}  &  = -8n_2|\Lambda|^2 (\tau - i |\Lambda|)-n_1\mathcal{B}_{12} - n_2 \mathcal{B}_{22} - n_3 \mathcal{B}_{23} \\
  & = -8n_2|\Lambda|^2 (\tau - i |\Lambda|) - \tau(3i|\Lambda|\Lambda_2 - 5|\Lambda|^2 n_2) - \Lambda_2 |\Lambda|^2 - 3i|\Lambda|^3 n_2.
\end{align*}

\begin{align*}
  M_{33}  &  = -8n_3|\Lambda|^2 (\tau - i |\Lambda|)-n_1\mathcal{B}_{13} - n_2 \mathcal{B}_{23} - n_3 \mathcal{B}_{33} \\
  & = -8n_3|\Lambda|^2 (\tau - i |\Lambda|) - \tau(3i|\Lambda|\Lambda_3 - 5|\Lambda|^2 n_3) - \Lambda_3 |\Lambda|^2 - 3i|\Lambda|^3 n_3.
\end{align*}

Then we show the rows of~\eqref{matrix_B_times_L} are linearly independent by computing the determinant of $M(\Lambda + \tau n)$:
\begin{align*}
    & \det \big(M(\Lambda +\tau n) \big) \\
    & = M_{31}(M_{12}M_{23} - M_{22}M_{13}) - M_{32}(M_{11}M_{23} - M_{21}M_{13}) + M_{33}(M_{11}M_{22}-M_{12}M_{21}).
\end{align*}

First we compute $M_{31}(M_{12}M_{23} - M_{22}M_{13})$. We expand the subtraction term as
\begin{equation}\label{det:first}
\begin{split}
    & |\Lambda|^2 \Bigg\{ \Big[ 8i|\Lambda|^2 \underbrace{n_1n_2}_{(1)} (\tau - i|\Lambda |)  + (2\underbrace{\Lambda_1 n_2}_{(2)}|\Lambda|-6i\underbrace{\Lambda_1 \Lambda_2}_{(3)})\tau - 2\Lambda_1 \Lambda_2 |\Lambda|+ 2i |\Lambda|^2 n_2 \Lambda_1  \Big] \\
    & \times \Big[8i |\Lambda|^2 \underbrace{n_2n_3}_{(4)} (\tau - i|\Lambda|) + (2\underbrace{\Lambda_2 n_3}_{(5)}|\Lambda|-6i\underbrace{\Lambda_3 \Lambda_2}_{(6)})\tau - 2\Lambda_3 \Lambda_2 |\Lambda|+ 2i |\Lambda|^2 n_3 \Lambda_2  \Big] \\
    & - \Big[-8i|\Lambda|^2 (1-n_2^2) (\tau - i|\Lambda|) + (2n_2 \Lambda_2 |\Lambda| - 6i \Lambda_2^2)\tau - 2\Lambda_2^2 |\Lambda| + 2i n_2 \Lambda_2 |\Lambda|^2  \Big]  \\
    & \ \times \Big[8i|\Lambda|^2 n_1n_3 (\tau - i|\Lambda|) + (2\Lambda_1 n_3|\Lambda|-6i\Lambda_1 \Lambda_3)\tau -2 \Lambda_1 \Lambda_3 |\Lambda|+ 2i |\Lambda|^2 n_3 \Lambda_1 \Big] \Bigg\}.    
\end{split}   
\end{equation}

The common terms of $(1)\times (4)$ combine to be
\begin{align*}
    & n_1n_2 n_2 n_3 + (1-n_2^2)n_1n_3 = n_1n_3.
\end{align*}

The common terms of $(2)\times (5)$ combine to be
\begin{align*}
    &  \Lambda_1 n_2 \Lambda_2 n_3 - n_2 \Lambda_2 \Lambda_1 n_3 = 0. 
\end{align*}

The common terms of $(3)\times (6)$ combine to be
\begin{align*}
    &  \Lambda_1 \Lambda_2 \Lambda_3 \Lambda_2 - \Lambda_2^2 \Lambda_1 \Lambda_3  = 0. 
\end{align*}

The common terms of $(2)\times (6) + (3)\times (5)$ combine to be
\begin{align*}
    &  -\Lambda_1 n_2 \Lambda_3 \Lambda_2 - \Lambda_2 n_3 \Lambda_1 \Lambda_2 + n_2\Lambda_2 \Lambda_1 \Lambda_3 + \Lambda_1 n_3 \Lambda_2^2 = 0 . 
\end{align*}

The common terms of $(1)\times (5) + (2)\times (4)$ combine to be
\begin{align*}
    &  n_1n_2 \Lambda_2 n_3 + \Lambda_1 n_2 n_2 n_3 + (1-n_2^2)\Lambda_1 n_3 - n_1n_3 n_2 \Lambda_2 = \Lambda_1 n_3. 
\end{align*}

The common terms of $(1)\times (6) + (3)\times (4)$ combine to be
\begin{align*}
    &  -n_1n_2 \Lambda_3 \Lambda_2 - \Lambda_1 \Lambda_2 n_2 n_3 - (1-n_2^2)\Lambda_1 \Lambda_3 + n_1n_3  \Lambda_2^2 \\
    &= -n_1n_2 \Lambda_2 \Lambda_3 - \Lambda_1 \Lambda_2 n_2 n_3 + n_2^2 \Lambda_1 \Lambda_3 + n_1n_3 \Lambda_2^2 - \Lambda_1 \Lambda_3. 
\end{align*}

We conclude that
\begin{align*}
  \eqref{det:first}  & = |\Lambda|^2\Big[-64 |\Lambda|^4 n_1n_3(\tau - i |\Lambda|)^2 + 8i|\Lambda|^2 \Lambda_1 n_3 (\tau - i|\Lambda|)[2|\Lambda|\tau  + 2i|\Lambda|^2]  \\
  & +8i|\Lambda|^2 (\tau - i|\Lambda|) [-n_1n_2 \Lambda_2 \Lambda_3 - \Lambda_1 \Lambda_2 n_2 n_3 + n_2^2 \Lambda_1 \Lambda_3 + n_1n_3 \Lambda_2^2 - \Lambda_1 \Lambda_3][6i\tau + 2|\Lambda|]\Big],
\end{align*}
and
\begin{equation}\label{det:1}
\begin{split}
 & M_{31}(M_{12}M_{23} - M_{22}M_{13}) \\
 & = |\Lambda|^3 [-8\underbrace{n_1}_{[1]}|\Lambda| (\tau - i |\Lambda|) - \tau(3i \underbrace{\Lambda_1}_{[2]} - 5|\Lambda| n_1) - \Lambda_1 |\Lambda| - 3i|\Lambda|^2 n_1] \\
 &\times \Big[-64 |\Lambda|^4 \underbrace{n_1n_3}_{[3]}(\tau - i |\Lambda|)^2 + 8i|\Lambda|^2 \underbrace{\Lambda_1 n_3}_{[4]} (\tau - i|\Lambda|)[2|\Lambda|\tau  + 2i|\Lambda|^2] \\
 &+8i|\Lambda|^2 (\tau - i|\Lambda|) \underbrace{[-n_1n_2 \Lambda_2 \Lambda_3 - \Lambda_1 \Lambda_2 n_2 n_3 + n_2^2 \Lambda_1 \Lambda_3 + n_1n_3 \Lambda_2^2 - \Lambda_1 \Lambda_3]}_{[5]}[6i\tau + 2|\Lambda|]\Big].    
\end{split}
\end{equation}

Following a similar computation, we combine common terms to conclude that
\begin{equation}\label{det:2}
\begin{split}
 & -M_{32}(M_{11}M_{23}-M_{21}M_{13})  \\
 & = |\Lambda|^3 [8n_2|\Lambda| (\tau - i |\Lambda|) + \tau(3i\Lambda_2 - 5|\Lambda| n_2) + \Lambda_2 |\Lambda| + 3i|\Lambda|^2 n_2] \\
 &\times \Big[64|\Lambda|^4 n_2n_3 (\tau - i|\Lambda|)^2 - 8i|\Lambda|^2 \Lambda_2 n_3 (\tau - i|\Lambda|)[2|\Lambda|\tau +2i|\Lambda|^2]  \\
 & + 8i|\Lambda|^2 (\tau - i|\Lambda|)[-n_1^2 \Lambda_2 \Lambda_3 - n_2n_3 \Lambda_1^2 + n_1n_2 \Lambda_1 \Lambda_3 + n_1n_3 \Lambda_1\Lambda_2 + \Lambda_2 \Lambda_3][6i\tau + 2|\Lambda|]\Big],    
\end{split}    
\end{equation}

\begin{equation}\label{det:3}
\begin{split}
    & M_{33}(M_{11}M_{22}-M_{12}M_{21})   \\
    & = |\Lambda|^3 [-8n_3|\Lambda| (\tau - i |\Lambda|) - \tau(3i\Lambda_3 - 5|\Lambda| n_3) - \Lambda_3 |\Lambda| - 3i|\Lambda|^2 n_3]  \\
    &\times \Big[-64|\Lambda|^4 (1-n_1^2-n_2^2)(\tau - i|\Lambda|)^2  - 8i|\Lambda|^2(\tau - i|\Lambda|)(n_2\Lambda_2 +n_1\Lambda_1)(2|\Lambda|\tau+2i|\Lambda|^2) \\
    &+8i|\Lambda|^2(\tau - i|\Lambda|)(-n_1^2 \Lambda_2^2 -n_2^2 \Lambda_1^2 + 2n_1n_2\Lambda_1 \Lambda_2 + \Lambda_1^2 + \Lambda_2^2 )(6i\tau + 2|\Lambda|)   \Big].
\end{split}    
\end{equation}

To compute $\det(M(\Lambda + \tau n))$, we combine common terms of $\eqref{det:1}+\eqref{det:2}+\eqref{det:3}$.

The common terms of $[1]\times [3]$ combine to be
\begin{align*}
    &n_1n_1n_3 + n_2n_2n_3 + n_3(1-n_1^2-n_2^2) = n_3.
\end{align*}

The common terms of $[1]\times [4]$ combine to be 
\begin{align*}
    & -n_1\Lambda_1 n_3 - n_2 \Lambda_2 n_3 + n_3 (n_2\Lambda_2 + n_1\Lambda_1)   =0.
\end{align*}

The common terms of $[1]\times [5]$ combine to be 
\begin{align*}
    & n_1^2n_2 \Lambda_2 \Lambda_3 + n_1n_2n_3\Lambda_1\Lambda_2 -n_1n_2^2\Lambda_1\Lambda_3 -n_1^2n_3\Lambda_2^2 + n_1 \Lambda_1\Lambda_3  \\
   & -n_1^2 n_2 \Lambda_2 \Lambda_3 - n_2^2 n_3 \Lambda_1^2 + n_1 n_2^2 \Lambda_1 \Lambda_3 + n_1n_2n_3 \Lambda_1\Lambda_2 + n_2\Lambda_2 \Lambda_3 \\
    & + n_1^2 n_3 \Lambda_2^2 + n_2^2n_3 \Lambda_1^2 - 2n_1n_2n_3 \Lambda_1 \Lambda_2 -n_3\Lambda_1^2 - n_3\Lambda_2^2 \\
    & = n_1\Lambda_1 \Lambda_3 +n_2 \Lambda_2 \Lambda_3 - n_3(|\Lambda|^2 - \Lambda_3^2) = -n_3 |\Lambda|^2 + \Lambda_3 \times (n\cdot \Lambda) = -n_3 |\Lambda|^2.
\end{align*}

The common terms of $[2]\times [3]$ combine to be
\begin{align*}
    &n_1n_3 \Lambda_1 + n_2n_3 \Lambda_2 + (1-n_1^2-n_2^2)\Lambda_3 = n_1n_3 \Lambda_1 + n_2n_3\Lambda_2 +n_3^2\Lambda_3 = n_3 \times (n\cdot \Lambda) = 0.
\end{align*}

The common terms of $[2]\times [4]$ combine to be 
\begin{align*}
    & -\Lambda_1^2 n_3 -  \Lambda_2^2 n_3 + \Lambda_3 (n_2\Lambda_2 + n_1\Lambda_1)   = \Lambda_2 \Lambda_3 n_2 + \Lambda_1 \Lambda_3 n_1 + \Lambda_3^2 n_3 - |\Lambda|^2 n_3 = -n_3|\Lambda|^2 .
\end{align*}

The common terms of $[2]\times [5]$ combine to be 
\begin{align*}
    & n_1n_2 \Lambda_1 \Lambda_2 \Lambda_3 + n_2n_3 \Lambda_1^2 \Lambda_2 - n_2^2 \Lambda_1^2 \Lambda_3 - n_1n_3 \Lambda_1 \Lambda_2^2 +\Lambda_1^2 \Lambda_3 \\
    & - n_1^2 \Lambda_2^2 \Lambda_3 - n_2n_3 \Lambda_1^2 \Lambda_2 + n_1n_2\Lambda_1 \Lambda_2 \Lambda_3 + n_1n_3 \Lambda_1 \Lambda_2^2+ \Lambda_2^2 \Lambda_3 \\
    & +n_1^2 \Lambda_2^2 \Lambda_3 + n_2^2 \Lambda_1^2 \Lambda_3 -2n_1n_2\Lambda_1\Lambda_2\Lambda_3 - \Lambda_1^2 \Lambda_3 - \Lambda_2^2 \Lambda_3  = 0.
\end{align*}

Finally we conclude that
\begin{align*}
  &\det(M(\Lambda+ \tau n))\\
  & = |\Lambda|^3\Big[64 |\Lambda|^5 n_3 (\tau-i|\Lambda|)^2 [8(\tau-i|\Lambda|)-5\tau + 3i|\Lambda|]\\
  & - 8i |\Lambda|^5 n_3 (\tau - i|\Lambda|)(6i\tau + 2|\Lambda|)[8(\tau-i|\Lambda|)-5\tau + 3i |\Lambda|] \\
& -8i|\Lambda|^5 n_3 (\tau - i|\Lambda|)(2\tau + 2i|\Lambda|)(3\tau i + |\Lambda|) \Big] \\
&= 8|\Lambda|^8 n_3 (\tau - i|\Lambda|)\Big[8  (\tau-i|\Lambda|) [3\tau - 5i|\Lambda|]\\
  & + (6\tau -2i |\Lambda|)[3\tau - 5i|\Lambda|] \\
& + (2\tau + 2i|\Lambda|)(3\tau  - i |\Lambda|) \Big]\\
& = 64 |\Lambda|^8 n_3 (\tau - i|\Lambda|)\Big[(\tau - i|\Lambda|)(3\tau - 5i|\Lambda|) + (3\tau - i|\Lambda|)(\tau - i|\Lambda|) \Big] \\
& = 384|\Lambda|^8 n_3 (\tau - i|\Lambda|)^3\neq 0,
\end{align*}
where we have used $|\Lambda|\neq 0$ and $n_3 \neq 0$. This verifies that all three rows of \eqref{matrix_B_times_L} are linearly independent. Thus~\eqref{system} satisfies the complementing boundary condition in~\cite{ADN}, and we conclude the lemma.\end{proof}

\begin{lemma}\label{lemma:elliptic}
When $h\in L^p_x$ for $p=6/5$ satisfies the compatibility condition \eqref{compatibility condition} in Theorem \ref{thm:symmetric_poisson}, there exists a unique solution to ~\eqref{system} such that $u\in W^{2,\frac{6}{5}}_x$ and $P_{\O}\big(\int_{\O}\nabla^a u 
 \dd x\big)=0$. Moreover
\begin{equation}\label{W2p}
\Vert u\Vert_{W^{2,\frac{6}{5}}_x} \lesssim \Vert h\Vert_{L^{6/5}_x}.
\end{equation}
\end{lemma}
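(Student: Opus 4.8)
\emph{Proof proposal.} The plan is to proceed in three steps: (i) establish the a priori bound \eqref{W2p} for every $W^{2,\frac65}_x$ solution of \eqref{system} by combining the Agmon--Douglis--Nirenberg estimate of Lemma~\ref{lemma:complementing} with a compactness--contradiction argument that removes the lower-order term $\|u\|_{L^{6/5}_x}$; (ii) construct a solution by approximating $h$ with $L^2_x\cap L^{6/5}_x$ data and invoking Theorem~\ref{thm:symmetric_poisson}; (iii) prove uniqueness via an energy identity and Korn's inequality (Theorem~\ref{lemma:estimate_nabla}). We first note a compatibility remark: if $u\in W^{2,\frac65}_x$ solves \eqref{system} with source $h$, then testing $-\Div(\nablaS u)=h$ against a rigid field $Ax\in\mathcal{R}_\O$ and integrating by parts — using $\nablaS(Ax)=0$, $Ax\cdot n=0$ and the second boundary condition of \eqref{system} — forces $\int_\O Ax\cdot h\,\dd x=0$, so \eqref{compatibility condition} holds automatically for any such $u$.

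\emph{Step (i).} By Lemma~\ref{lemma:complementing}, every $W^{2,\frac65}_x$ solution $u$ of \eqref{system} satisfies $\|u\|_{W^{2,\frac65}_x}\lesssim\|h\|_{L^{6/5}_x}+\|u\|_{L^{6/5}_x}$. Suppose \eqref{W2p} fails; then there exist $W^{2,\frac65}_x$ solutions $u_k$ of \eqref{system} with data $h_k$ such that $\|u_k\|_{W^{2,\frac65}_x}=1$ and $\|h_k\|_{L^{6/5}_x}\to0$. Since replacing $u_k$ by $u_k-P_\O\big(\int_\O\nablaA u_k\,\dd x\big)x\in\mathcal{X}_0$ alters neither the equation nor the two boundary conditions of \eqref{system} (the subtracted rigid field has vanishing $\nablaS$ and is tangent to $\p\O$, and the correction is bounded by $\|u_k\|_{W^{2,\frac65}_x}$), we may assume $P_\O\big(\int_\O\nablaA u_k\,\dd x\big)=0$. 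In dimension three $W^{2,\frac65}_x\hookrightarrow H^1_x$ and $W^{2,\frac65}_x\hookrightarrow\hookrightarrow W^{1,\frac65}_x\hookrightarrow\hookrightarrow L^{6/5}_x$, so along a subsequence $u_k\rightharpoonup u$ in $W^{2,\frac65}_x$, $u_k\to u$ in $W^{1,\frac65}_x$, and by trace continuity the boundary conditions pass to the limit; hence $u\in\mathcal{X}_0$ solves \eqref{system} with $h=0$. By the uniqueness argument of Step (iii) this gives $u\equiv0$, so $\|u_k\|_{L^{6/5}_x}\to0$, contradicting $1=\|u_k\|_{W^{2,\frac65}_x}\lesssim\|h_k\|_{L^{6/5}_x}+\|u_k\|_{L^{6/5}_x}\to0$. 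This proves \eqref{W2p} for all $W^{2,\frac65}_x$ solutions.

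\emph{Step (ii).} Pick $\tilde h_k\in L^2_x\cap L^{6/5}_x$ with $\tilde h_k\to h$ in $L^{6/5}_x$ (e.g. truncations $\tilde h_k=h\mathbf{1}_{|h|\le k}$). Since $R_1,R_2$ from Definition~\ref{def:basis} span the finite-dimensional space $\mathcal{R}_\O$ and lie in $L^2_x\cap L^{6/5}_x$ (they are linear in $x$ and $\O$ is bounded), and the Gram matrix of the $R_i$ is invertible, we may choose $\rho_k\in\mathrm{span}\{R_1,R_2\}$ so that $h_k:=\tilde h_k-\rho_k$ obeys \eqref{compatibility condition}; because $\int_\O R_i\cdot h\,\dd x=0$ and $\tilde h_k\to h$, we have $\rho_k\to0$ and hence $h_k\to h$ in $L^{6/5}_x$ with $h_k\in L^2_x\cap L^{6/5}_x$. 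By Theorem~\ref{thm:symmetric_poisson} there is a unique $u_k\in\mathcal{X}_0\cap H^2_x$ solving \eqref{system} a.e., and $H^2_x\subset W^{2,\frac65}_x$ as $\O$ is bounded. Applying \eqref{W2p} from Step (i) to the difference $u_k-u_\ell$ (a $W^{2,\frac65}_x$ solution with source $h_k-h_\ell$ and $P_\O\big(\int_\O\nablaA(u_k-u_\ell)\big)=0$) shows $\{u_k\}$ is Cauchy in $W^{2,\frac65}_x$; the limit $u\in W^{2,\frac65}_x$ solves \eqref{system} with source $h$, satisfies $P_\O\big(\int_\O\nablaA u\,\dd x\big)=0$, and obeys \eqref{W2p}.

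\emph{Step (iii), and the main difficulty.} If $u_1,u_2$ both solve \eqref{system} with the same $h$ and $P_\O\big(\int_\O\nablaA u_j\,\dd x\big)=0$, put $w=u_1-u_2$. Multiplying $-\Div(\nablaS w)=0$ by $w$ and integrating by parts — using $\nablaS w:\nabla w=\nablaS w:\nablaS w$, the first boundary condition $w\cdot n=0$ and the second, $\nablaS w\,n=(\nablaS w:n\otimes n)n$ — yields $\int_\O|\nablaS w|^2\,\dd x=\int_{\p\O}(\nablaS w:n\otimes n)(n\cdot w)\,\dd S=0$. Korn's inequality (Theorem~\ref{lemma:estimate_nabla}), applicable since $w\cdot n=0$ on $\p\O$, then gives $K(\O)\|w\|_{H^1_x}^2\le\|\nablaS w\|_{L^2_x}^2+\big|P_\O\big(\int_\O\nablaA w\,\dd x\big)\big|^2=0$, so $w=0$. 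The main obstacle is Step (i): one must ensure the compactness argument stays inside the class where both Lemma~\ref{lemma:complementing} and Theorem~\ref{lemma:estimate_nabla} apply — in particular that the normalization $P_\O\big(\int_\O\nablaA u_k\,\dd x\big)=0$ may be imposed without loss, that the weak limit genuinely inherits the interior equation \emph{and} both boundary conditions of \eqref{system}, and that the relevant compact Sobolev embedding indeed reaches $L^{6/5}_x$ in dimension three.
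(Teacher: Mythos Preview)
Your proof is correct and follows essentially the same route as the paper: uniqueness via integration by parts plus Korn's inequality, existence by approximating $h$ with $L^2_x\cap L^{6/5}_x$ data (projected to satisfy \eqref{compatibility condition}) and invoking Theorem~\ref{thm:symmetric_poisson}, and the a priori bound \eqref{W2p} via Lemma~\ref{lemma:complementing} combined with a compactness--contradiction argument. The one noteworthy variation is in the contradiction step: the paper normalizes $\|v_m\|_{L^{6/5}_x}=1$ and identifies the weak limit through the \emph{variational} formulation~\eqref{variational} (testing against the limit $v$ itself to get $\|\nablaS v\|_{L^2_x}=0$ directly), which cleanly bypasses the need to pass the second boundary condition of~\eqref{system} to the limit; your direct approach of passing the full system also works but requires that extra trace verification you flagged as the main obstacle.
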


\begin{proof}\textit{Proof of Uniqueness.} We prove the uniqueness. Suppose $u_1,u_2\in W^{2,p}_x$ satisfy~\eqref{system} for $h\in L^p_x$ satisfying the compatibility condition \eqref{compatibility condition}, and $P_{\O}\big( \int_{\O}\nabla^a u_1 \dd x\big) =0= P_{\O}\big(\int_{\O}\nabla^a u_2 \dd x\big)$. Then $u=u_1-u_2$ satisfies~\eqref{system} for $h=0$ and $P_{\O}\big(\int_{\O}\nabla^a u \dd x \big) = 0$. Via an integration by parts we compute
\begin{equation}\label{bdr_vanish}
\begin{split}
 0= -\int_{\O} \Div (\nablaS u) u \dd x & = \int_{\O}|\nablaS u|^2 \dd x  -{\color{black}\int_{\p \O}}  \nablaS u n u \dd x   \\
  & = \int_{\O}|\nablaS u|^2 \dd x - {\color{black}\int_{\p \O}} (\nablaS u:n\otimes n) n\cdot u \\
  & = \int_{\O}|\nablaS u|^2 \dd x  ,    
\end{split}
\end{equation}
where we have applied two boundary conditions in~\eqref{system}, consecutively. By the Korn's inequality (Theorem \ref{lemma:estimate_nabla}), with ${\color{black} P_\O\big(\int_{\O} \nablaA u  \dd x\big) = 0}$ we conclude $\Vert u\Vert_{H^1_x} = 0$ and thus $u=0$.

\textit{Proof of Existence.} For $h\in L^p_x, p=\frac{6}{5}$ satisfying the compatibility condition \eqref{compatibility condition}, 
 we choose an approximate sequence $h_k\in L^p_x \cap L^2_x$ such that $\Vert h_k-h\Vert_{L^p_x} = \frac{1}{2^k}$. Then
for all $k$, from $R_i(x),i\in \{1,2\}$ defined in~\eqref{basis} and Definition \ref{def:basis}, we have
\begin{align}
  \Big|\int_{\O} R_i(x) \cdot h_k \dd x \Big|  & \leq \int_{\O}|R_i(x) \cdot (h_k-h)| \dd x \leq C(\O) \Vert h_k-h\Vert_{L^p_x} \leq \frac{C(\O)}{2^k}. \label{R_i_dot_h_k_bdd}
\end{align}
Here, we note that $C(\O)$ does not depend on $k$. 

Now we define 
\begin{equation}
\hat{h}_k := h_k - \sum_{i} \frac{B_{i,k}}{M_i}R_i(x) 
, \label{def:h_k}
\end{equation}where $M_i$ and $B_{i,k}$ are defined as
\begin{align}
M_i     &:= \int_{\O} |R_i(x)|^2 \dd x \gtrsim 1, \label{Mi}\\
B_{i,k} &:= \int_{\O}R_i (x) \cdot h_k (x) \dd x \notag.
\end{align}

 For any $Ax\in \mathcal{R}_\O$, using \eqref{basis}, we can choose $C_1, C_2 \in \mathbb R$ such that $Ax = C_1 R_1 (x) + C_2 R_2 (x)$. Then we compute that 
\begin{align}
     & \int_{\O}Ax\cdot \hat{h}_k \dd x      
    \notag
    \\
    &
   =  \int_{\O} \Big\{  C_1 R_1(x) \cdot h_k + C_2 R_2(x) \cdot h_k- C_1 \frac{B_{1,k}}{M_1} |R_1(x)|^2 - C_2 \frac{B_{2,k}}{M_2} |R_2(x)|^2 \Big\} \dd x \notag\\
    & = C_1 B_{1,k} + C_2B_{2,k} - C_1B_{1,k} -C_2B_{2, k} = 0. \notag
\end{align}

On the other hand, from \eqref{def:h_k}, we derive that 
\begin{align*}
  \Vert \hat{h}_k -h\Vert_{L^p_x}  & \leq \Vert h_k - h\Vert_{L^p_x} + \Vert h_k - \hat{h}_k\Vert_{L^p_x}   \\
  & \leq \frac{1}{2^k} + \Big\Vert \frac{B_{1, k}}{M_1}R_1(x)+\frac{B_{2, k}}{M_2}R_2(x)\Big\Vert_{L^p_x} \leq \frac{1}{2^k} + \frac{C_1(\O)}{2^k}.
\end{align*}
At the last inequality, we have applied~\eqref{R_i_dot_h_k_bdd}. Therefore we conclude that $\Vert \hat{h}_k-h\Vert_{L^p_x} \to 0$ with $\hat{h}_k\in L^2_x\cap L^p_x$ . 

From Theorem \ref{thm:symmetric_poisson}, we can construct a unique solution $u_k\in \mathcal{X}_0$ to \eqref{system} with $h=\hat{h}_k \in L^2_x \cap L^p_x$ of \eqref{def:h_k}. By the a priori estimate in Lemma \ref{lemma:complementing}, we have
\begin{align}
    \Vert u_k\Vert_{W^{2,p}_x} \lesssim \Vert \hat{h}_k\Vert_{L^p_x} +\Vert u_k \Vert_{L^p_x}. \label{aprior}
\end{align}
Now we claim 
\begin{equation}\label{aprior_p}
\Vert u_k\Vert_{L^p_x}\lesssim \Vert \hat{h}_k\Vert_{L^p_x}.   
\end{equation}
Once \eqref{aprior_p} is given, from \eqref{aprior}, we can conclude that 
\begin{equation*}
 \Vert u_k\Vert_{W^{2,p}_x} \lesssim \Vert \hat{h}_k \Vert_{L^p_x} .  
\end{equation*}

Suppose $\Vert u_k\Vert_{p}\lesssim \Vert \hat{h}_k\Vert_{L^p_x}$ is not true, then there exists $v_m\in W^{2,p}_x$ and $g_m \in L^p_x\cap L^2_x$ such that $v_m$ is the unique solution in Theorem \ref{thm:symmetric_poisson} with $h=g_m$, and
\begin{align*}
    & \Vert v_m \Vert_{L^p_x} = 1, \ \ \Vert g_m \Vert_{L^p_x} \to 0.
\end{align*}
Then $\Vert v_m\Vert_{W^{2,p}_x}\lesssim 1$ from~\eqref{aprior}, there exists $v\in W^{2,p}_x$ such that $v_m \rightharpoonup v$ weakly in $W^{2,p}_x$. Moreover, from the compact embedding theorem, we have $\Vert v\Vert_{L^p_x} = 1$.
    
From the Sobolev inequality, we have $\Vert v_m\Vert_{H^1_x}\lesssim \Vert v_m\Vert_{W^{2,\frac{6}{5}}_x}\lesssim 1$, then up to subsequence, we also have $v_m \rightharpoonup v$ weakly in $H^1_x$. Since $v_m$ is the unique solution in Theorem \ref{thm:symmetric_poisson}, we set the weak limit $v$ to be the test function in the weak formulation~\eqref{variational}, then we have
\begin{align}
    & -\int_\O \nablaS v_m : \nablaS v \dd x = \int_\O g_m \cdot v \dd x . \label{var_for_lp}
\end{align}
Note that the above equality holds since $v\in H^1_x$ and $v\cdot n = \lim_{m\to \infty} v_m\cdot n = 0$ on $\partial\Omega$ from the trace theorem. These conditions imply $v\in \mathcal{X}_0$.

From the weak convergence in $H^1_x$, for LHS of~\eqref{var_for_lp} we have
\begin{align*}
    -\int_\O \nablaS v_m :\nablaS v \dd x  \to -\int_\O \nablaS  v :\nablaS v \dd x.
\end{align*}
For RHS of~\eqref{var_for_lp}, by H\"{o}lder inequality, we have
\begin{align*}
    \Big| \int_\O g_m v\dd x \Big| \lesssim \Vert g_m\Vert_{L^{\frac{6}{5}}_x} \Vert v\Vert_{L^6_x} \lesssim \Vert g_m\Vert_{L^{\frac{6}{5}}_x} \Vert v\Vert_{H^1_x} \to 0,
\end{align*}
where we have applied the Sobolev embedding $H^1_x \subset L^{6}_x$. Hence we conclude that $\Vert \nablaS v\Vert_{L^2_x}  = 0$. From the compact embedding, we have $\Vert v_m-v\Vert_{W^{1,\frac{6}{5}}_x}\to 0$, then we obtain $|P_\O \big(\int_{\O} \nablaA v \dd x\big)|=0$ and $v\cdot n=0$ from the trace theorem. By the Korn's inequality in Theorem \ref{lemma:estimate_nabla}, we get {\color{black}$\Vert v\Vert_{H^1_x} = 0$ and thus $v=0$.} This contradicts to $\Vert v\Vert_{L^{\frac{6}{5}}_x}=1$. Then we conclude~\eqref{aprior_p} from the contradiction argument.

Applying the estimate~\eqref{aprior_p} to $u_i-u_j$, we have
\begin{align*}
  \Vert u_i-u_j\Vert_{W^{2,p}_x}  & \lesssim \Vert \hat{h}_i - \hat{h}_j\Vert_{L^p_x} \to 0 \text{ as } i,j\to \infty.  
\end{align*}
This implies $u_k$ is a Cauchy sequence and has a limit $u\in W^{2,p}_x$. Moreover, since $u_k\in \mathcal{X}_0$ is a solution to~\eqref{system} with $h=\hat{h}_k$ and $u_k\in H^2_x$ as in Theorem \ref{thm:symmetric_poisson}, $u_k$ satisfies the boundary condition and thus for all $k$, we have $u_k \cdot n=0$ and $\nablaS u_k n - (\nablaS u_k:n\otimes n)n=0$ on $\partial\Omega$. Firstly, from the trace theorem we have $\Vert u-u_k\Vert_{W^{1,p}(\p\O)} \to 0 $, this leads to $u\cdot n=0$ on $\p\O$, and $\nablaS u n - (\nablaS u:n\otimes n)n=0$. Secondly, from $P_\O \big(\int_{\O}\nablaA u_k \dd x \big) = 0$, for any $k$, we have
\begin{align*}
    &  \Big|P_{\O} \Big(\int_{\O}\nablaA u \dd x \Big) \Big| \leq \Big|P_\O\Big(\int_{\O}\nablaA (u-u_k) \dd x\Big)\Big| \lesssim \Vert u-u_k\Vert_{W^{2,p}_x} \lesssim_\O \frac{1}{2^k},
\end{align*}
we conclude that $P_\O\big(\int_{\O}\nablaA u \dd x\big) = 0$.

Taking $k\to \infty$ for both $u_k$ and $\hat{h}_k$ we conclude that
\begin{align*}
    & -\Div(\nablaS u) = h.
\end{align*}
Thus $u\in W^{2,p}_x$ is a solution to~\eqref{system} with $h\in L^p_x$ and $P_\O\big(\int_{\O}\nablaA u \dd x\big)=0$. Moreover, such solution satisfies~\eqref{W2p} with $\Vert \hat{h}_k-h\Vert_{L^p_x} \to 0 $. \end{proof}

\section{$L^6$ estimate and proof of Theorem \ref{thm:l6}}\label{sec_l6}

In this section we will conclude the Theorem \ref{thm:l6}. A key estimate is an $L^6$ control for the momentum $b$ in Lemma \ref{lemma:l6_b}. To complete the proof of Theorem \ref{thm:l6}, we also need the estimate of mass $a$ and energy $c$, we present them in Lemma \ref{lemma:l6_a_c}.

\begin{lemma}\label{lemma:l6_b}
Suppose all assumptions in Theorem \ref{thm:l6} hold, then we have
\begin{equation*}
\frac{1}{\e^s} \Vert b\Vert_{L^6_x}^6    \lesssim  \frac{\Vert (\mathbf{I}-\mathbf{P})f\Vert_{L^6_{x,v}}^6}{\e^s} + \frac{ \Vert \mu^{1/4}\mathcal{L}_\alpha f\Vert_{L^2_{x,v}}^6}{\e^{s+6k}} + \e^{5s} \Vert \nu^{-1/2}(g-\p_t f)\Vert_{L^2_{x,v}}^6.    
\end{equation*}

\end{lemma}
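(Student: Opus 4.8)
The plan is to run the constructive test function method of \cite{EGKM2}, with the test function produced from the symmetric Poisson system \eqref{system} so that the specular boundary flux drops out. Since $\|b\|_{L^6_x}^6=\int_\O|b|^4b\cdot b\,\dd x$, I would first introduce the $\mathcal R_\O$-corrected source
\[
h_b:=|b|^4b-\sum_{i}\frac{1}{M_i}\Big(\int_\O R_i\cdot|b|^4b\,\dd x\Big)R_i ,
\]
with $M_i$ as in \eqref{Mi} and $R_1,R_2$ the orthonormal basis of $\mathcal R_\O$ in Definition \ref{def:basis}. By Hölder and $\|R_i\|_{L^{6/5}_x}\lesssim1$ one has $h_b\in L^{6/5}_x$ with $\|h_b\|_{L^{6/5}_x}\lesssim\|b\|_{L^6_x}^5$, and $h_b$ satisfies the compatibility condition \eqref{compatibility condition}; hence Lemma \ref{lemma:elliptic} produces $\phi\in W^{2,6/5}_x$ solving \eqref{system} with source $h_b$ and $\|\phi\|_{W^{2,6/5}_x}\lesssim\|b\|_{L^6_x}^5$, so by Sobolev embedding $\|\nabla^2\phi\|_{L^{6/5}_x}+\|\nabla\phi\|_{L^2_x}\lesssim\|b\|_{L^6_x}^5$. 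I would then test \eqref{steady_f} (at a fixed time) against
\[
\psi_b(x,v):=\sum_{i,j=1}^3\big(v_iv_j-\delta_{ij}\big)\sm\,\p_{x_j}\phi_i(x)
\]
via the Green's identity for the transport operator with the specular data \eqref{specular_f}.

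Integrating the transport term by parts in $x$ yields $-\frac1{\e^s}\iint f\,(v\cdot\nabla_x\psi_b)$ together with the boundary flux $\frac1{\e^s}\int_{\p\O}\!\int_{\R^3}(n\cdot v)f\,\psi_b$. The crux of the proof — and the step I expect to be the main obstacle — is that this flux vanishes: by \eqref{specular_f} the trace $f(x,\cdot)$ is invariant under $v\mapsto R_xv$ on $\p\O$, so $\int_{\R^3}(n\cdot v)f\psi_b\,\dd v=\tfrac12\int_{\R^3}(n\cdot v)f\,[\psi_b(x,v)-\psi_b(x,R_xv)]\,\dd v$, and plugging $R_xv=v-2(n\cdot v)n$ (the $-\delta_{ij}$ terms cancel and the remaining sum symmetrizes the gradient) gives
\[
\psi_b(x,v)-\psi_b(x,R_xv)=4(n\cdot v)\,\sm\,\big(v\cdot\nablaS\phi\, n\big)-4(n\cdot v)^2\,\sm\,\big(n\cdot\nablaS\phi\, n\big) ,
\]
which is identically $0$ on $\p\O$ exactly because the second boundary condition in \eqref{system} forces $\nablaS\phi\, n=(\nablaS\phi:n\otimes n)n$ to be parallel to $n$. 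This is the new point relative to the flat or diffuse cases, and it is precisely why the naive choice $-\Delta\phi=h_b$ with Neumann-type data is inadmissible on a curved boundary (cf. Remark \ref{remark:sym_poisson}).

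It then remains to identify $-\frac1{\e^s}\iint f(v\cdot\nabla_x\psi_b)$. By parity in $v$ the $a$- and $c$-parts of $\P f$ contribute nothing; the $b$-part gives, from $\int_{\R^3}v_lv_k(v_iv_j-\delta_{ij})\mu\,\dd v=\delta_{li}\delta_{kj}+\delta_{lj}\delta_{ki}$, the identity \eqref{div_Delta}, and $-\Div\nablaS\phi=h_b$,
\[
\iint \big(b\cdot v\,\sm\big)(v\cdot\nabla_x\psi_b)=2\int_\O b\cdot(\Div\nablaS\phi)\,\dd x=-2\int_\O b\cdot h_b\,\dd x=-2\|b\|_{L^6_x}^6 ,
\]
where the rigid-motion correction vanishes because $\int_\O b\cdot R_i\,\dd x=0$ by the conservation of angular momentum \eqref{angular_momentum}; the $\ip f$-part contributes a term $\lesssim\|\nabla^2\phi\|_{L^{6/5}_x}\|\ip f\|_{L^6_{x,v}}\lesssim\|b\|_{L^6_x}^5\|\ip f\|_{L^6_{x,v}}$ (Hölder together with the Gaussian decay of $v_l(v_iv_j-\delta_{ij})\sm$). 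For the other two terms of the tested equation, $\P\mathcal L_\alpha f=0$ lets only the $\ip$-part of $\psi_b$ matter, and Cauchy–Schwarz gives $\iint(\mathcal L_\alpha f)\psi_b\lesssim\|\mu^{1/4}\mathcal L_\alpha f\|_{L^2_{x,v}}\|\nabla\phi\|_{L^2_x}\lesssim\|\mu^{1/4}\mathcal L_\alpha f\|_{L^2_{x,v}}\|b\|_{L^6_x}^5$ and $\iint(g-\p_t f)\psi_b\lesssim\|\nu^{-1/2}(g-\p_t f)\|_{L^2_{x,v}}\|\nabla\phi\|_{L^2_x}\lesssim\|\nu^{-1/2}(g-\p_t f)\|_{L^2_{x,v}}\|b\|_{L^6_x}^5$. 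Collecting,
\[
\frac{2}{\e^s}\|b\|_{L^6_x}^6\lesssim\Big(\frac1{\e^s}\|\ip f\|_{L^6_{x,v}}+\frac1{\e^{k+s}}\|\mu^{1/4}\mathcal L_\alpha f\|_{L^2_{x,v}}+\|\nu^{-1/2}(g-\p_t f)\|_{L^2_{x,v}}\Big)\|b\|_{L^6_x}^5 ,
\]
so dividing by $\|b\|_{L^6_x}^5$ (the case $b\equiv0$ being trivial), raising to the sixth power, and multiplying by $\e^{5s}$ gives the claim.
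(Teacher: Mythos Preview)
Your proposal is correct and follows essentially the same approach as the paper: the same test function built from the symmetric Poisson system, the same mechanism by which the second boundary condition in \eqref{system} kills the specular flux, the same $W^{2,6/5}_x$ elliptic estimate from Lemma~\ref{lemma:elliptic}, and the same use of angular momentum conservation \eqref{angular_momentum} to eliminate the $\mathcal R_\O$-correction. The only cosmetic differences are that the paper takes the componentwise source $b^5=(b_1^5,b_2^5,b_3^5)$ rather than $|b|^4b$, checks the boundary cancellation after rotating to $n=(1,0,0)$ rather than via your coordinate-free computation of $\psi_b(v)-\psi_b(R_xv)$, treats the non-axisymmetric and axisymmetric cases in separate subsections rather than uniformly, and closes with a Young-inequality absorption rather than dividing by $\|b\|_{L^6_x}^5$.
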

\begin{remark}
In the proof, we introduce a new test function, denoted as \eqref{test_b_1}, which includes a factor $\phi_b$ satisfying the symmetric Poisson system \eqref{system_b_5}. By considering the action of the transport operator on this test function, we obtain the symmetric Poisson operator $\Div(\nablaS u)$. The detailed computation is provided in \eqref{transport_b}.

Importantly, with the boundary condition specified in the symmetric Poisson system \eqref{system_b_5}, the boundary contribution in the weak formulation \eqref{steady_weak_formulation} vanishes due to the combination of terms $K_{12}$ and $K_{13}$ in \eqref{bdr_2_3_vanish}. This key observation circumvents an issue mentioned in Remark \ref{remark:sym_poisson}, where for a non-flat boundary, the term $K_{13}$ does not vanish when using the boundary condition $u\cdot n=0$ and $\partial_n u\cdot \tau = 0$ ($\tau$ denotes the tangential vector).

\end{remark}

Here, we emphasize that Lemma \ref{lemma:l6_b} holds for both non-axisymmetric and axisymmetric domains. To better understand the contributions of axisymmetry, we provide separate proofs of this lemma for these two settings in Section \ref{sec:nonaxis} and Section \ref{sec:axis}, respectively.

Then, in Section \ref{sec:l6_a_c}, we provide the estimate of $a$ and $c$. At the end of the section, we collect the results from Lemma \ref{lemma:l6_b} and Lemma \ref{lemma:l6_a_c} to conclude Theorem \ref{thm:l6}.

\subsection{Proof of Lemma \ref{lemma:l6_b} under non-axisymmetric domain}\label{sec:nonaxis}

We first consider a non-axisymmetric domain, so that $\mathcal{R}_\O = \{0\}$. Then in Theorem \ref{thm:symmetric_poisson}, $\mathcal{X}_0 = \mathcal{X}$ and there is no compatibility condition for $h$ since $Ax = 0$ for $Ax \in \mathcal{R}_\O$.

For the proof we will use special test functions together with the weak formulation of~\eqref{steady_f}:
\begin{align}
&  -\frac{1}{\e^s} \int_{\O\times \mathbb{R}^3} v\cdot \nabla_x \psi f \dd x \dd v    \notag  \\
 & = \int_{\gamma} \psi f \dd \gamma + \frac{1}{\e^{s+k}} \int_{\O\times \mathbb{R}^3} \mathcal{L}_\alpha f \psi \dd x \dd v + \int_{\O\times \mathbb{R}^3} (g-\p_t f)\psi \dd x \dd v : = K_1+K_2+K_3. \label{steady_weak_formulation}
\end{align}
{\color{black} Here $\gamma$ is defined in \eqref{gamma_bdr}, $\dd \gamma:= (n(x)\cdot v) \dd S_x \dd v$, and $\dd S_x$ is the surface integral on the boundary.}

We define the following Burnette function of the space $\mathcal{N}^\perp$ as
\begin{equation}\label{B_ij}
\hat A_{ij}(v) :=  \left(v_i v_j - \frac{\delta_{ij}}{3}|v|^2\right) \sqrt \mu  \text{ for } i,j=1,2,3.
\end{equation}

A key property of~\eqref{B_ij} is
\begin{equation}\label{B_ij_property}
\mathbf{P}(\hat{A}_{ij}) = 0.
\end{equation}

We will use the following basic facts for the velocity integral:
\begin{align*}
  \int_{-\infty}^{\infty} e^{-v^2/2} \dd v &   = \sqrt{2\pi},\\
  \int_{-\infty}^{\infty} v^2 e^{-v^2/2} \dd v & = \sqrt{2\pi} , \\
  \int_{-\infty}^{\infty} v^4 e^{-v^2/2} \dd v & = 3\sqrt{2\pi}.
\end{align*}
The above computation leads to the following facts for the integral of $\mu(v)$ in~\eqref{Maxwellian}: for $i=1,2,3$
\begin{equation}\label{fact}
\begin{split}
     \int_{\mathbb{R}^3}   \mu(v) \dd v & = 1 , \\
    \int_{\mathbb{R}^3}  |v_i|^2  \mu(v) \dd v & = 1, \\
    \int_{\mathbb{R}^3} |v_i|^2 |v_j|^2 \mu(v)\dd v & =1 \ \text{ for } i\neq j, \\
    \int_{\mathbb{R}^3} |v_i|^4 \mu(v) \dd v & = 3. 
\end{split}
\end{equation}

Let $\phi_b$ be the solution of the following system:
\begin{align}
  -\Div(\nablaS \phi_b)  & = \frac{1}{2} b^5 \text{ in }\O \notag \\
  \phi_b \cdot n & = 0  \text{ on } \p\O\label{system_b_5}\\
  \nablaS \phi_b n& = (\nablaS \phi_b : n\otimes n)n \text{ on } \p\O .\notag
\end{align}
Here $b^5 = (b_1^5,b_2^5,b_3^5)$. From elliptic estimate in Lemma \ref{lemma:elliptic}, we have 
{\color{black}
\begin{align}
 \Vert \phi_b\Vert_{W^{2,\frac{6}{5}}_x} \lesssim \Vert b^5\Vert_{L^{\frac{6}{5}}_x} \lesssim \Vert b\Vert_{L^6_x}^5. \label{phi_b_W_2}
\end{align}
}

We choose test function as
\begin{align}
  &\psi=\psi_b \notag\\
  &:= \sum_{i,j=1}^3  \p_j \phi^i_b v_i v_j \mu^{1/2} - \sum_{i=1}^3  \p_i \phi_b^i  \mu^{1/2} \label{test_b_1}\\
  & = \sum_{i,j=1}^3 \p_j \phi^i_b \mu^{1/2} \Big[v_i v_j - \frac{\delta_{ij}}{3}|v|^2\Big] + \sum_{i,j=1}^3 \p_j \phi^i_b \mu^{1/2} \frac{\delta_{ij}}{3}|v|^2 - \sum_{i=1}^3 \p_i \phi_b^i \mu^{1/2} \notag\\
  & = \sum_{i,j=1}^3 \p_j \phi_b^i  \hat{A}_{ij} + \sum_{i=1}^3 \p_i \phi^i_b \mu^{1/2} \Big[\frac{|v|^2-3}{3} \Big] \notag\\
  & = \sum_{i,j=1}^3 \p_j \phi_b^i  \hat{A}_{ij} + \sum_{i=1}^3 \p_i \phi_b^i  \chi_4 \frac{\sqrt{6}}{3}. \label{test_b_2}
\end{align}
Here $\chi_4$ and $\hat{A}_{ij}$ are defined in~\eqref{basis_chi} and~\eqref{B_ij} respectively.

We compute the transport operator $-v\cdot \nabla_x$ on $\psi_b$ using~\eqref{test_b_1}:
\begin{align}
    & -v\cdot \nabla_x \psi_b = -\sum_{i,j,k=1}^3 \p_{kj}\phi_b^i v_i v_j v_k \mu^{1/2} + \sum_{i,k=1}^3 v_k\p_{ki} \phi_b^i \mu^{1/2}  \notag \\
    & = -\sum_{i,j,k=1}^3 \p_{kj} \phi_b^i (\mathbf{I}-\mathbf{P})(v_i v_j v_k \mu^{1/2}) - \sum_{i,j,k=1}^3  \p_{kj} \phi_b^i  \mathbf{P}(v_i v_j v_k \mu^{1/2}) + \sum_{i,k=1}^3 v_k\p_{ki}\phi_b^i \mu^{1/2}. \label{test_b_derivative}
\end{align}
For $\mathbf{P}(v_iv_j v_k \mu^{1/2})$, when $i=j=k$, we have
\begin{align*}
   \mathbf{P}((v_i)^3 \mu^{1/2}) & = \langle (v_i)^3 \mu^{1/2}, \chi_i \rangle \chi_i = 3\chi_i,
\end{align*}
where we applied the computation in~\eqref{fact}.

When $i=j\neq k$, we apply~\eqref{fact} to have
\begin{align*}
   \mathbf{P}((v_i)^2 v_k \mu^{1/2}) & = \langle (v_i)^2 v_k \mu^{1/2}, \chi_k \rangle \chi_k = \chi_k.
\end{align*}
When $j=k\neq i$, we apply~\eqref{fact} to have
\begin{align*}
   \mathbf{P}((v_j)^2 v_i \mu^{1/2}) & = \langle (v_j)^2 v_i \mu^{1/2}, \chi_i \rangle \chi_i = \chi_i.
\end{align*}
When $i=k\neq j$, we apply~\eqref{fact} to have
\begin{align*}
   \mathbf{P}((v_i)^2 v_j \mu^{1/2}) & = \langle (v_i)^2 v_j \mu^{1/2}, \chi_j \rangle \chi_j = \chi_j.
\end{align*}
The above computation yields
\begin{align}
   &-\sum_{i,j,k=1}^3 \p_{kj}\phi^i_b \mathbf{P}(v_iv_jv_k \mu^{1/2}) \notag\\ 
   &=-3\sum_{ {\color{black}i=j=k}=1}^3 \p_{ii}\phi_b^i \chi_i-\sum_{j=k\neq i} \p_{jj}\phi^i_b \chi_i \notag\\
   &-\sum_{i=j\neq k} \p_{ki}\phi^i_b \chi_k -\sum_{i=k\neq j} \p_{ij}\phi^i_b \chi_j \notag\\
   &=-3\sum_{i=1}^3 \p_{ii}\phi_b^i \chi_i-\sum_{j\neq i} \p_{jj}\phi^i_b \chi_i \notag\\
   &-\sum_{i\neq k} \p_{ki}\phi^i_b \chi_k -\sum_{i\neq j} \p_{ij}\phi^i_b \chi_j .  \label{estimate_P}
\end{align}
Here we note that the last two terms are the same.

The last term in~\eqref{test_b_derivative} reads
\begin{align}
    &  \sum_{i=k}^3 \p_{ii} \phi_b^i  \chi_i + \sum_{i\neq k}^3 \p_{ki}\phi_b^i  \chi_k. \label{estimate_second}
\end{align}

Collecting~\eqref{estimate_P} and~\eqref{estimate_second}, the last two terms in~\eqref{test_b_derivative} combine to be
\begin{align}
    &- \sum_{i,j,k=1}^3  \p_{kj} \phi_b^i  \mathbf{P}(v_i v_j v_k \mu^{1/2}) + \sum_{i,k=1}^3 v_k\p_{ki}\phi_b^i \mu^{1/2} \notag\\
    & = -2\sum_{i=1}^3 \p_{ii} \phi^i_b 
    \chi_i
    - \sum_{j\neq i} \p_{jj} \phi^i_b  \chi_i - \sum_{i\neq j}\p_{ij}\phi_b^i \chi_j \notag \\
    & = -\sum_{i=1}^3 \p_{ii} \phi^i_b \chi_i - \sum_{j\neq i} \p_{jj} \phi^i_b  \chi_i \notag\\ 
    & -\sum_{i=1}^3 \p_{ii} \phi^i_b  \chi_i- \sum_{{\color{black}j}\neq {\color{black}i}}\p_{{\color{black}ji}}\phi_b^{{\color{black}j}}\chi_{{\color{black}i}} \notag\\
    & =-\sum_{i=1}^3 \chi_i  \sum_{j=1}^3 \p_{jj}\phi_b^i    - \sum_{i=1}^3 \chi_i  \sum_{j=1}^3 \p_{ij}\phi_b^j \notag\\
    & = -\sum_{i=1}^3  \chi_i [\Delta \phi^i_b + \p_i \Div(\phi_b)] = \sum_{i=1}^3 \chi_i b_i^5.  \label{transport_b}
\end{align}
In the last line we used that $\phi_b^i$ is the solution to the system~\eqref{system_b_5}, and $\Div(\nablaS \phi_b) = \frac{1}{2} (\Delta \phi_b + \nabla \Div \phi_b)$ from~\eqref{div_Delta}.

Then for~\eqref{test_b_derivative} we conclude that
\begin{align*}
   \eqref{test_b_derivative} & = \sum_{i=1}^3  \chi_i b_i^5 -\sum_{i,j,k=1}^3 \p_{kj} \phi_b^i  (\mathbf{I}-\mathbf{P})(v_i v_j v_k \mu^{1/2}).
\end{align*}

Then LHS of~\eqref{steady_weak_formulation} becomes
\begin{align}
   LHS & =  \frac{1}{\e^s} \Big\{ \int_{\O} |b|^6 \dd x   -\underbrace{\sum_{i,j,k=1}^3  \int_\O \p_{kj}\phi_b^i \langle v_iv_jv_k \mu^{1/2}, (\mathbf{I}-\mathbf{P})f \rangle  \dd x  }_{E_1}  \Big\}, \label{lhs_l6_non}
\end{align}
where, by Young's inequality with $\frac{1}{6}+ \frac{5}{6} = 1$,
\begin{align*}
    |E_1| &\lesssim o(1) \Vert \nabla^2 \phi_b\Vert_{L^{\frac{6}{5}}_x}^{\frac{6}{5}} +  \Vert (\mathbf{I}-\mathbf{P})f\Vert_{L^6_{x,v}}^6 \\
    &\lesssim o(1) \Vert \phi_b\Vert_{W^{2,\frac{6}{5}}_x}^{\frac{6}{5}} + \Vert (\mathbf{I}-\mathbf{P})f\Vert_{L^6_{x,v}}^6 \lesssim o(1) \Vert b\Vert_{L^6_x}^6 + \Vert (\mathbf{I}-\mathbf{P})f\Vert_{L^6_{x,v}}^6.
\end{align*}
Here we have used \eqref{phi_b_W_2}.

Next we estimate $K_i, 1\leq i\leq 3$ in~\eqref{steady_weak_formulation}. For $K_1$ we use the representation of $\psi_b$ in~\eqref{test_b_1} to have
\begin{align*}
 \int_\gamma \psi_b f \dd \gamma   = &   \int_{\p\O\times \mathbb{R}^3} (n\cdot v)\Big(\sum_{i,j=1}^3  \p_j \phi^i_b v_i v_j \mu^{1/2} - \sum_{i=1}^3  \p_i \phi_b^i  \mu^{1/2}\Big)f\dd v \dd S_x.
\end{align*}
The second term vanishes by applying the specular boundary condition of $f$. For the first term, we first compute the velocity integral. Through a coordinate rotation, we may assume that $n=(1,0,0)$. Then we compute the $\dd v$ integral as
\begin{align*}
& \sum_{i,j=1}^3  \int_{\mathbb{R}^3} v_1 \p_j \phi_b^i v_i v_j \mu^{1/2} f \dd v      \\
    &=  \p_1 \phi_b^1 \int_{\mathbb{R}^3} (v_1)^3 \mu^{1/2} f \dd v +  \sum_{i=2}^3 \p_1 \phi_b^i  \int_{\mathbb{R}^3} (v_1)^2 v_i \mu^{1/2} f \dd v \\
    & + \sum_{j=2}^3 \p_j \phi_b^1 \int_{\mathbb{R}^3} (v_1)^2 v_j \mu^{1/2} f\dd v +  \sum_{i,j=2}^3  \p_j \phi_b^i \int_{\mathbb{R}^3}  v_1 v_i v_j \mu^{1/2} f \dd v\\
    & := K_{11} + K_{12} + K_{13} + K_{14}.
\end{align*}
From $n=(1,0,0)$ we have specular reflection $f(v_1,v_2,v_3) = f(-v_1,v_2,v_3)$, this leads to $K_{11}=K_{14} = 0$. For $K_{12}$ and $K_{13}$, we combine them to have
\begin{align}
  K_{12}+K_{13}  &  =  \int_{\mathbb{R}^3} (v_1)^2 \Big[ v_2 (\p_2 \phi_b^1 + \p_1 \phi_b^2) + v_3 (\p_3 \phi_b^1 + \p_1 \phi_b^3) \Big] \mu^{1/2} f \dd v  . \label{bdr_2_3_vanish}
\end{align}

Then we apply the second boundary condition in~\eqref{system_b_5} to have
\begin{align*}
     \begin{bmatrix}
       \p_{1}\phi_b^1 \\
        \frac{\p_1 \phi_b^2 + \p_2 \phi_b^1}{2} \\
        \frac{\p_1 \phi_b^3 + \p_3 \phi_b^1}{2}
    \end{bmatrix} = \begin{bmatrix}
        \p_1 \phi_b^1 \\
        0 \\
        0 
    \end{bmatrix}.
\end{align*}
Here we note that $\p_j \phi_b^1 \neq 0, j=2,3$ and thus $K_{13}$ does not vanish in general for a non-flat boundary. We conclude that
\begin{align*}
    K_{12} + K_{13} = 0.
\end{align*}
Hence we conclude the estimate for $K_1$ as
\begin{align}
    K_1 = 0. \label{K_1_estimate}
\end{align}

For $K_2$ and $K_3$, we apply Sobolev-Gagliardo-Nirenberg inequality to have
\begin{align*}
    & \Vert \nabla \phi_b\Vert_{L^2_x} \lesssim \Vert \nabla \phi_b\Vert_{W^{1,\frac{6}{5}}_x} \lesssim \Vert b\Vert_{L^6_x}^5.
\end{align*}

Thus $K_2$ and $K_3$ are bounded as
\begin{align}
  |K_2|  & \lesssim \frac{1}{\e^{s+k}}\Vert \mu^{1/4}\mathcal{L}_\alpha f \Vert_{L^2_{x,v}} \Vert b\Vert_{L^6_x}^5 \lesssim \frac{o(1)}{\e^s} \Vert b\Vert_{L^6_x}^6 + \frac{1}{ \e^{s+6k}} \Vert \mu^{1/4}\mathcal{L}_\alpha f\Vert^6_{L^2_{x,v}}, \label{K2_b_l6_bdd}
\end{align}
\begin{align}
  |K_3|  & \lesssim \Vert \mu^{1/4} (g-\p_t f)\Vert_{L^2_{x,v}} \Vert b\Vert_{L^6_x}^5 \lesssim \frac{o(1)}{\e^s} \Vert b\Vert_{L^6_x}^6 + \e^{5s} \Vert \nu^{-1/2}(g-\p_t f)\Vert^6_{L^2_{x,v}}  . \label{K3_b_l6_bdd}
\end{align}

Collecting \eqref{lhs_l6_non}, \eqref{K_1_estimate}, \eqref{K2_b_l6_bdd} and \eqref{K3_b_l6_bdd}, we conclude Lemma \ref{lemma:l6_b} for the non-axisymmetric case.

\subsection{Proof of Lemma \ref{lemma:l6_b} under axisymmetric domain}\label{sec:axis}
We consider the case that the domain is axisymmetric. Recall \eqref{basis} and Definition \ref{def:basis}. \hide For $\dim(\O)=3$, we take $R_1(x)$ and $R_2(x)$ to be an orthonormal basis of $\mathcal{R}_\O$, so that for any $R(x)\in \mathcal{R}_{\O}$, we can express 
\begin{align}
    & R(x) = C_1 R_1(x) + C_2R_2(x) \text{ for some } C_1,C_2, \ R_1(x)\cdot R_2(x) = 0.     \label{basis}
\end{align}
Here note that if $\dim(\mathcal{R}_\O)=1$, then $R_1(x) = R_2(x)$.\unhide

Then we denote 
\begin{align}
  B_1  &  := \int_{\O}R_1(x)\cdot b^5(x)\dd x  , \notag\\
  B_2 & := \int_{\O}R_2(x)\cdot b^5(x) \dd x. \notag
\end{align}
Recall $M_i$ in \eqref{Mi}.

For a given source $b^5(x)$ in~\eqref{system_b_5}, we denote $h(x)$ as the subtraction of the source and basis~\eqref{basis}:
\begin{equation}\label{source_l6}
h(x) = b^5(x) - \frac{B_1}{M_1}R_1(x) - \frac{B_2}{M_2}R_2(x).    
\end{equation}
For any $Ax\in \mathcal{R}_\O$, we have that for some $C_1,C_2$, $Ax = C_1R_1(x) + C_2R_2(x)$, then
\begin{align}
    &\int_{\O} Ax \cdot h(x) \dd x     \notag\\
    &  = \int_{\O} \Big\{  C_1 R_1(x) \cdot b^5 + C_2 R_2(x) \cdot b^5 - C_1 \frac{B_1}{M_1} |R_1(x)|^2 - C_2 \frac{B_2}{M_2} |R_2(x)|^2 \Big\} \dd x \notag\\
    & = C_1 B_1 + C_2B_2 - C_1B_1 -C_2B_2 = 0. \notag
\end{align}
Thus~\eqref{source_l6} satisfies the compatibility condition in Theorem \ref{thm:symmetric_poisson}.

Then we consider a variant of~\eqref{system_b_5} as following:
\begin{align}
  -\Div(\nablaS \phi_b)  & = \frac{1}{2} \times \eqref{source_l6}  \text{ in }\O \notag \\
  \phi_b \cdot n & = 0  \text{ on } \p\O\label{system_b_5_sym}\\
  \nablaS \phi_b n& = (\nablaS \phi_b : n\otimes n)n \text{ on } \p\O .\notag
\end{align}
From elliptic estimate in Lemma \ref{lemma:elliptic}, there exists a unique $\phi_b$ to~\eqref{system_b_5_sym} that satisfies
{\color{black}
\begin{align}
   & \Vert \phi_b\Vert_{W^{2,\frac{6}{5}}_x} \lesssim \Big \Vert b^5 - \frac{B_1}{M_1}R_1(x) - \frac{B_2}{M_2}R_2(x) \Big\Vert_{L^{\frac{6}{5}}_x} \notag\\
    & \lesssim \Vert b\Vert_{L^6_x}^5 + |B_1| + |B_2| \lesssim \Vert b\Vert_{L^6_x}^5,   \label{b_W2_sym}
\end{align}
}
where we have used $\Vert R_i\Vert_{L^\infty_x} \lesssim 1$ for a bounded $\O$ and used H\"{o}lder inequality to have
\begin{align*}
   |B_i| & \lesssim \int_{\O} |b^5(x)| \dd x   \lesssim_\O \Vert b\Vert_{L^6_x}^5.
\end{align*}

Following the computation in~\eqref{lhs_l6_non}, LHS of~\eqref{steady_weak_formulation} becomes
\begin{align*}
  LHS  & = \frac{1}{\e^s} \Big\{\int_{\O}b(x) \cdot \big[ b^5(x) -  \frac{B_1}{M_1}R_1(x)-\frac{B_2}{M_1}R_2(x)\big]  \\
  & \ \ \ \ -\underbrace{\sum_{i,j,k=1}^3  \int_\O \p_{kj}\phi_b^i \langle v_iv_jv_k \mu^{1/2}, (\mathbf{I}-\mathbf{P})f \rangle  \dd x}_{E_2}  \Big\}    \\
  & = \frac{1}{\e^s} \Big\{\int_{\O} |b|^6 \dd x -E_2 \Big\},
\end{align*}
where we have used~\eqref{angular_momentum} to have
\begin{align*}
    \int_{\O} b(x)\cdot R_i(x) \dd x = 0.
\end{align*}

Since the $W^{2,\frac{6}{5}}_x$ estimate in~\eqref{b_W2_sym}  has the same upper bound as~\eqref{phi_b_W_2}, the estimates for $E_2$ and $K_1,K_2,K_3$ on the RHS of~\eqref{steady_weak_formulation} are also the same as the non-axisymmetric case. Thus we conclude Lemma \ref{lemma:l6_b} for the axisymmetric case.

\subsection{$L^6$ estimate of $a$ and $c$}\label{sec:l6_a_c}

\begin{lemma}\label{lemma:l6_a_c}
\begin{align}
  \frac{1}{\e^s} \Vert a\Vert_{L^6_x}^6  &  \lesssim  \frac{\Vert (\mathbf{I}-\mathbf{P})f\Vert_{L^6_{x,v}}^6}{\e^s} + \frac{\Vert \mu^{1/4}\mathcal{L}_\alpha f\Vert_{L^2_{x,v}}^6}{\e^{s+6k}} + \e^{5s} \Vert \nu^{-1/2} (g - \p_t f)\Vert_{L^2_{x,v}}^6. \label{a_l6}
\end{align}

\begin{align}
  \frac{1}{\e^s} \Vert c\Vert_{L^6_x}^6  &  \lesssim  \frac{\Vert (\mathbf{I}-\mathbf{P})f\Vert_{L^6_{x,v}}^6}{\e^s} + \frac{\Vert \mu^{1/4}\mathcal{L}_\alpha f\Vert_{L^2_{x,v}}^6}{\e^{s+6k}} + \e^{5s} \Vert \nu^{-1/2}(g-\p_t f)\Vert_{L^2_{x,v}}^6. \label{c_l6}
\end{align}
    
\end{lemma}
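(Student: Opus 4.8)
The plan is to reuse the test function scheme of Lemma~\ref{lemma:l6_b}, replacing the symmetric Poisson system~\eqref{system} by \emph{scalar} Poisson problems carrying the homogeneous Neumann condition. The Neumann condition is the natural boundary condition, so it automatically satisfies the complementing condition of \cite{ADN} and one may invoke standard $W^{2,p}$ elliptic regularity for the Neumann Laplacian (or run the argument of Lemma~\ref{lemma:elliptic} for this much simpler scalar operator). In particular the special boundary condition in~\eqref{system} will not be needed here, because the test functions for $a$ and $c$ carry only one extra velocity factor compared with the one for $b$.

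For~\eqref{a_l6}, first I would let $\phi_a$ be the mean-zero solution of
\[
-\Delta\phi_a = a^5 - \tfrac{1}{|\O|}\int_{\O} a^5\,\dd x' \ \ \text{in }\O , \qquad \p_n\phi_a = 0 \ \ \text{on }\p\O ,
\]
the spatial mean being subtracted precisely so that the Neumann compatibility condition holds; then $\Vert\phi_a\Vert_{W^{2,6/5}_x}\ls\Vert a^5\Vert_{L^{6/5}_x}\ls\Vert a\Vert_{L^6_x}^5$ and, by Sobolev--Gagliardo--Nirenberg, $\Vert\nabla_x\phi_a\Vert_{L^2_x}\ls\Vert a\Vert_{L^6_x}^5$, exactly as for $\phi_b$. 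I then take the test function $\psi_a := (|v|^2-\beta_a)\sqrt\mu\; v\cdot\nabla_x\phi_a$, with $\beta_a$ fixed by the explicit Gaussian identity $\langle(|v|^2-\beta_a)v_i^2\sqrt\mu,\chi_4\rangle = 0$ (a short computation from~\eqref{fact}), which makes the energy mode $c$ drop out of the velocity moment below; the momentum modes $b_i$ drop out automatically by parity. Substituting into the weak formulation~\eqref{steady_weak_formulation} and computing $-v\cdot\nabla_x\psi_a = -(|v|^2-\beta_a)\sqrt\mu\sum_{i,j}v_iv_j\p_{ij}\phi_a$ as in~\eqref{test_b_derivative}, the diagonal part $i=j$ produces $-\kappa_a\,a\,\Delta\phi_a = \kappa_a\,a\,(a^5-\langle a^5\rangle_\O)$ with $\kappa_a := \langle(|v|^2-\beta_a)v_i^2\sqrt\mu,\chi_0\rangle\neq 0$; integrating over $\O$ and using the mass conservation~\eqref{mass_conserve} to cancel the $\langle a^5\rangle_\O\int_\O a\,\dd x$ term gives the main contribution $\frac{\kappa_a}{\e^s}\int_\O a^6\,\dd x$, while the off-diagonal part together with the $\ip f$ contribution of the diagonal part forms an error $E_a$ bounded, just as $E_1$ in Lemma~\ref{lemma:l6_b} (H\"older, then Young with $\tfrac16+\tfrac56=1$), by $o(1)\Vert\nabla^2\phi_a\Vert_{L^{6/5}_x}^{6/5}+\Vert\ip f\Vert_{L^6_{x,v}}^6\ls o(1)\Vert a\Vert_{L^6_x}^6+\Vert\ip f\Vert_{L^6_{x,v}}^6$.

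It then remains to handle $K_1,K_2,K_3$ on the right-hand side of~\eqref{steady_weak_formulation}. For $K_1=\int_\gamma\psi_a f\,\dd\gamma$ I would rotate so that $n=e_1$ at a boundary point; then $(n\cdot v)\psi_a$ reduces to the single term $v_1^2(|v|^2-\beta_a)\sqrt\mu\,\p_1\phi_a\,f$, which vanishes thanks to $\p_1\phi_a=\p_n\phi_a=0$, plus the terms $v_1v_j(|v|^2-\beta_a)\sqrt\mu\,\p_j\phi_a\,f$ with $j\neq1$, which are odd in $v_1$ and integrate to zero in $v$ since $f|_{\p\O}$ is even in $n\cdot v$ by~\eqref{specular_f}; hence $K_1=0$. (Unlike in the momentum case, the bare Neumann condition suffices here, because $\psi_a$ carries only one extra velocity factor, so no mixed term $v_1^2 v_j$ appears.) The terms $K_2,K_3$ are estimated exactly as in~\eqref{K2_b_l6_bdd}--\eqref{K3_b_l6_bdd}, using $\Vert\mu^{-1/4}\psi_a\Vert_{L^2_v}+\Vert\nu^{1/2}\psi_a\Vert_{L^2_v}\ls|\nabla_x\phi_a|$ together with $\Vert\nabla_x\phi_a\Vert_{L^2_x}\ls\Vert a\Vert_{L^6_x}^5$. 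Collecting everything and absorbing the $o(1)\Vert a\Vert_{L^6_x}^6$ terms into the left-hand side (possible since $\kappa_a\neq0$ and $\int_\O a^6\geq0$) yields~\eqref{a_l6}. The estimate~\eqref{c_l6} follows in the same way, with $\phi_c$ solving the analogous Neumann problem with source $c^5-\tfrac1{|\O|}\int_\O c^5\,\dd x'$, test function $\psi_c:=(|v|^2-\beta_c)\sqrt\mu\;v\cdot\nabla_x\phi_c$ where $\beta_c$ is now chosen so that the \emph{mass} mode drops out ($\langle(|v|^2-\beta_c)v_i^2\sqrt\mu,\chi_0\rangle=0$, so the moment picks out $c$), and with the energy conservation~\eqref{energy_conserve} replacing~\eqref{mass_conserve}.

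Given Lemma~\ref{lemma:l6_b}, the argument is essentially routine; the two points that need care are the following. First, the spatial mean must be subtracted from the Poisson source in order to meet the Neumann compatibility condition, and then the full $\int_\O a^6$ (resp. $\int_\O c^6$) must be recovered by invoking the conservation law~\eqref{mass_conserve} (resp.~\eqref{energy_conserve}); one cannot instead regularize with a zeroth-order term $-\Delta\phi+\phi$, because the resulting extra contribution $\int_\O a\phi_a$ is of the same size $\Vert a\Vert_{L^6_x}^6$ as the main term and cannot be absorbed. Second, $\beta_a$ and $\beta_c$ must be chosen so as to decouple the $a$ and $c$ velocity moments, so that the right-hand sides of~\eqref{a_l6}--\eqref{c_l6} come out clean. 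Neither is a genuine obstacle.
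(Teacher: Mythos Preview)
Your proposal is correct and follows essentially the same route as the paper: the paper also solves the scalar Neumann problems $-\Delta\phi_a=a^5-\langle a^5\rangle_\O$ and $-\Delta\phi_c=c^5-\langle c^5\rangle_\O$, takes the test functions $\psi_a=\sum_i\p_i\phi_a\,v_i(|v|^2-10)\sqrt\mu$ and $\psi_c=\sum_i\p_i\phi_c\,v_i(|v|^2-5)\sqrt\mu$ (your $\beta_a,\beta_c$ work out to $10$ and $5$), invokes~\eqref{mass_conserve} and~\eqref{energy_conserve} to kill the mean terms, and disposes of $K_1$ via the Neumann condition and specular parity exactly as you describe. Your discussion of why $-\Delta+1$ would fail and why the decoupling constants matter is a nice addition not spelled out in the paper.
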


\begin{proof}We follow the proof of \cite{guo2018boltzmann}. Again we will use the weak formulation~\eqref{steady_weak_formulation} in our estimate.

In the proof we will use another Burnette function of the space $\mathcal{N}^\perp$:
\begin{equation*}
\hat{B}_{i} :=  v_i \frac{|v|^2-5}{\sqrt{10}} \mu^{1/2} \text{ for } i = 1,2,3.
\end{equation*}

\textit{Proof of~\eqref{a_l6}}.
We choose $\phi_a$ to be the solution of the following problem:
\begin{align*}
  -\Delta \phi_a   = a^5 - \frac{1}{|\O|} \int a^5 \dd x , \ \
   \frac{\p \phi_a}{\p n} = 0 \text{ on } \p \O, \ \ \int \phi_a \dd x = 0.
   \end{align*}
In~\eqref{steady_weak_formulation}, we take the test function to be
\begin{align}
  \psi = \psi_a   &  := \sum_{i=1}^3  \p_i \phi_a v_i(|v|^2 -10) \mu^{1/2}   = \sum_{i=1}^3 \p_i \phi_a  (\sqrt{10}\hat{B}_i - 5\chi_i). \label{test_a}
\end{align}
Standard elliptic estimate leads to
\begin{align*}
  \Vert \phi_a \Vert_{W^{2,\frac{6}{5}}_x}  & \lesssim   \Big\Vert a^5 - \frac{1}{|\O|}\int a^5 \dd x \Big\Vert_{L^{\frac{6}{5}}_x} \lesssim_\O \Vert a^5\Vert_{L^{\frac{6}{5}}_x} + \Vert a\Vert_{L^6_x}^5 \lesssim \Vert a \Vert_{L^6_x}^5.
\end{align*}

Direct computation yields
\begin{align*}
  -v \cdot \nabla_x \psi_a  &  = -5 \Big(a^5- \frac{1}{|\O|}\int a^5 \dd x \Big) \chi_0 - \sum_{i,j=1}^3 \p^2_{ij} \phi_a (\mathbf{I}-\mathbf{P})(v_iv_j(|v|^2-10)\mu^{1/2})   .
\end{align*}

Then LHS of~\eqref{steady_weak_formulation} becomes
\begin{align*}
  LHS  &  =  \frac{5}{\e^s} \int_\O \Big[ a^6 - \Big(\frac{1}{|\O|}\int_\O a^5 \dd x \Big) a(x) \Big] \dd x   + \frac{1}{\e^s}E_a = \frac{5}{\e^s} \Vert a\Vert_{L^6_x}^6 + \frac{1}{\e^s}E_a,
\end{align*}
where we have applied the conservation in mass~\eqref{mass_conserve}. Here $E_a$ satisfies
\begin{align*}
  |E_a|  &   \lesssim o(1) \Vert \nabla^2 \phi_a \Vert_{L^{\frac{6}{5}}_x}^{\frac{6}{5}} + \Vert (\mathbf{I}-\mathbf{P})f\Vert_{L^6_{x,v}}^6 \lesssim o(1)\Vert a\Vert_{L^6_x}^6 + \Vert (\mathbf{I}-\mathbf{P})f\Vert_{L^6_{x,v}}^6.
\end{align*}

The contribution of $K_1$ vanishes due to zero Neurmann boundary condition. $K_2$ and $K_3$ are estimated by the same computation as~\eqref{K2_b_l6_bdd} and~\eqref{K3_b_l6_bdd} with replacing $b$ by $a$. Then we conclude~\eqref{a_l6}.

\textit{Proof of~\eqref{c_l6}}.
We choose $\phi_c$ to be the solution of the following problem:
\begin{align*}
  -\Delta \phi_c   = c^5 - \frac{1}{|\O|}\int c^5 \dd x , \ \
   \frac{\p \phi_c}{\p n} = 0 \text{ on } \p \O, \ \ \int \phi_c \dd x = 0.
   \end{align*}
We take the following test function 
\begin{align}
  \psi  & :=\psi_c = \sum_{i=1}^3 \p_i \phi_c v_i(|v|^2-5) \mu^{1/2} =\sum_{i=1}^3 \sqrt{10}  \p_i \phi_c \hat{B}_i. \label{test_c}
\end{align}

A direct computation leads to
\begin{align*}
  -v\cdot \nabla_x \psi_c  &   = -\frac{5\sqrt{6}}{3} \Big(c^5- \frac{1}{|\O|}\int c^5 \dd x \Big) \chi_4 - \sum_{i,j=1}^3 \p_{ij}^2\phi_c (\mathbf{I}-\P)(v_iv_j (|v|^2-5)\mu^{1/2}) .
\end{align*}
Then LHS of~\eqref{steady_weak_formulation} reads
\begin{align*}
  LHS  & = \frac{5\sqrt{6}}{3\e^s} \int_{\O} \Big[ c^6 - \Big(\frac{1}{|\O|}\int c^5 \dd x \Big)c(x) \Big] \dd x + \frac{1}{\e^s} E_c = \frac{5\sqrt{6}}{3\e^s}  \Vert c\Vert_{L^6_x}^6 + \frac{1}{\e^s} E_c,
\end{align*}
where we have applied the conservation in energy~\eqref{energy_conserve}, and $E_c$ satisfies
\begin{align*}
  |E_c|  &  \lesssim o(1)\Vert \nabla^2 \phi_c \Vert_{L^{6/5}_x}^{6/5}   + \Vert (\mathbf{I}-\mathbf{P})f\Vert_{L^6_{x,v}}^6 \lesssim o(1)\Vert c\Vert_{L^6_x}^6 + \Vert (\mathbf{I}-\mathbf{P})f\Vert_{L^6_{x,v}}^6 .
\end{align*}

Again the contribution of $K_1$ vanishes, and $K_2$ and $K_3$ are bounded as~\eqref{K2_b_l6_bdd} and~\eqref{K3_b_l6_bdd} with replacing by $c$. Then we conclude~\eqref{c_l6}.

\end{proof}

\begin{proof}[\textbf{Proof of Theorem \ref{thm:l6}}]
Theorem \ref{thm:l6} follows by combining Lemma \ref{lemma:l6_b} and Lemma \ref{lemma:l6_a_c}.
\end{proof}

\section{$L^2$ estimate and proof of Theorem \ref{thm:coercive}}\label{sec:l2}

In this section, we will conclude the $L^2$ estimate (Theorem \ref{thm:coercive}). To be specific, in Section \ref{sec:l2_sketch}, we will prove the estimate of $b$ and estimate of $a,c$ in Lemma \ref{lemma:b} and Lemma \ref{lemma:a_c}. We sketch the proof since it shares some similarities to the $L^6$ estimate following the test function method. In Section \ref{sec:4.1}, we conclude Theorem \ref{thm:coercive}, and Corollary \ref{coro:boltzmann}, \ref{coro:landau}.

\subsection{$L^2$ estimate of $b$ and $a,c$}\label{sec:l2_sketch}

{\color{black}In Lemma \ref{lemma:b} and Lemma \ref{lemma:a_c}, we provide $L^2$ estimate of the macroscopic quantities following the test function method used in Section \ref{sec_l6}. The main difference in the proof is an additional estimate to the temporal derivative of the test function. The other estimates are the same as Lemma \ref{lemma:l6_b} and Lemma \ref{lemma:l6_a_c}.

\begin{lemma}\label{lemma:b}
Suppose all assumptions in Theorem \ref{thm:coercive} hold. For any $\delta_2>0$ and some constant $C_2>0$, we have the following estimate for the momentum:
\begin{align}
& \frac{1}{\e^s}(1-C_2\delta_2) \int_s^t \Vert b\Vert_{L^2_x}^2\dd \tau  \notag\\
 & \leq G_b(t) - G_b(s) + \frac{C_2 \delta_2}{\e^s} \int_s^t \Vert a\Vert_{L^2_x}^2\dd \tau +  \frac{C_2}{\e^s \delta_2} \int_s^t \Vert c\Vert_{L^2_x}^2\dd \tau \notag \\
 & + \frac{C_2}{\e^s \delta_2} \int_s^t \Vert (\mathbf{I}-\mathbf{P})f\Vert_{L^2_{x,v}}^2  \dd \tau+ \frac{C_2}{\delta_2 \e^{s+2k}} \int_s^t \Vert \mu^{1/4}\mathcal{L}_\alpha f \Vert_{L^2_{x,v}}^2 \dd \tau + \frac{C_2\e^s}{\delta_2}\int_s^t \Vert g\Vert_{L^2_{x,v}}^2 \dd \tau.  \label{b_estimate}
\end{align}
{\color{black} Here $|G_b(t)|\lesssim \Vert f(t)\Vert_{L^2_{x,v}}^2$ is defined in \eqref{Gb}.}

\end{lemma}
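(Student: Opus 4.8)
The plan is to run the test-function scheme of Lemma~\ref{lemma:l6_b} with the cubic source $b^5$ replaced by the linear source $b$, pairing the test function with the time-dependent equation~\eqref{linear_bolzmann}; the only genuinely new term will come from differentiating the test function in time. First I would let $\phi_b$ solve the symmetric Poisson system~\eqref{system} with $h=\tfrac12 b$. The compatibility condition~\eqref{compatibility condition} is automatic here: on a non-axisymmetric domain $\mathcal{R}_\O=\{0\}$ and there is nothing to check, while on an axisymmetric domain $\int_\O Ax\cdot b\,\dd x=0$ for every $Ax\in\mathcal{R}_\O$ by the conservation of angular momentum~\eqref{angular_momentum}. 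Hence Theorem~\ref{thm:symmetric_poisson} yields a unique $\phi_b\in\mathcal{X}_0\cap H^2_x$ with $\|\phi_b\|_{H^2_x}\lesssim\|b\|_{L^2_x}$, and I take $\psi_b$ exactly as in~\eqref{test_b_1}--\eqref{test_b_2} built from this $\phi_b$.

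Pairing $\psi_b$ with~\eqref{linear_bolzmann} over $\O\times\R^3$ gives $\int\p_t f\,\psi_b+\tfrac1{\e^s}\int v\cdot\nabla_x f\,\psi_b+\tfrac1{\e^{s+k}}\int\mathcal{L}_\alpha f\,\psi_b=\int g\,\psi_b$. The transport term is handled verbatim as in Lemma~\ref{lemma:l6_b}: after an integration by parts the boundary flux $\int_\gamma\psi_b f\,\dd\gamma$ vanishes by the specular symmetry of $f$ together with the second boundary condition of~\eqref{system}, exactly as in~\eqref{K_1_estimate}, and the computation~\eqref{transport_b} (now with $b$ in place of $b^5$) gives $-\int f\,v\cdot\nabla_x\psi_b\,\dd x\dd v=\int_\O|b|^2\,\dd x-\sum_{ijk}\int_\O\p_{kj}\phi^i_b\,\langle v_iv_jv_k\mu^{1/2},\ip f\rangle\,\dd x$, with remainder bounded by $o(1)\|\phi_b\|_{H^2_x}^2+C\|\ip f\|_{L^2_{x,v}}^2\lesssim o(1)\|b\|_{L^2_x}^2+C\|\ip f\|_{L^2_{x,v}}^2$. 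The collision and source terms $\tfrac1{\e^{s+k}}\int\mathcal{L}_\alpha f\,\psi_b$ and $\int g\,\psi_b$ are bounded by Cauchy--Schwarz with $\|\nabla\phi_b\|_{L^2_x}\lesssim\|b\|_{L^2_x}$, and Young's inequality with parameter $\delta_2$ turns them into the $\tfrac{1}{\delta_2\e^{s+2k}}\|\mu^{1/4}\mathcal{L}_\alpha f\|_{L^2_{x,v}}^2$ and $\tfrac{\e^s}{\delta_2}\|g\|_{L^2_{x,v}}^2$ contributions of~\eqref{b_estimate}, plus a term $\tfrac{\delta_2}{\e^s}\|b\|_{L^2_x}^2$ to be absorbed later.

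The new ingredient is the time derivative. Write $\int\p_t f\,\psi_b=\frac{d}{dt}\int f\psi_b\,\dd x\dd v-\int f\,\p_t\psi_b\,\dd x\dd v$. After integrating in $\tau$ over $[s,t]$, the first piece is $G_b(t)-G_b(s)$ with $G_b(t):=\int_{\O\times\R^3}f(t)\psi_b(t)\,\dd x\dd v$, and $|G_b(t)|\le\|f(t)\|_{L^2_{x,v}}\|\psi_b(t)\|_{L^2_{x,v}}\lesssim\|f(t)\|_{L^2_{x,v}}\|\phi_b\|_{H^2_x}\lesssim\|f(t)\|_{L^2_{x,v}}^2$, which is~\eqref{Gb}. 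For $\int f\,\p_t\psi_b$, note $\psi_b$ is linear in $\nabla\phi_b$, so $\p_t\psi_b$ is linear in $\nabla\p_t\phi_b$; by linearity of the Poisson solution operator $\p_t\phi_b$ solves the same system with source $\tfrac12\p_t b$, and since $\p_t b$ is again angular-momentum free by~\eqref{angular_momentum_preserve}, testing the variational identity with $\p_t\phi_b$ and invoking Korn's inequality (Theorem~\ref{lemma:estimate_nabla}) and Poincar\'e (Lemma~\ref{Lemma: poincare}) gives $\|\p_t\phi_b\|_{H^1_x}\lesssim\|\p_t b\|_{H^{-1}_x}$. Using~\eqref{linear_bolzmann} together with $\langle\mathcal{L}_\alpha f,\chi_i\rangle=0$ and $\langle g,\chi_i\rangle=0$ (from~\eqref{Pg=0}), one has $\p_t b_i=\langle\p_t f,\chi_i\rangle=-\tfrac1{\e^s}\sum_k\p_{x_k}\langle v_kv_i\mu^{1/2},f\rangle$, hence $\|\p_t b\|_{H^{-1}_x}\lesssim\tfrac1{\e^s}\sum_{k,i}\|\langle v_kv_i\mu^{1/2},f\rangle\|_{L^2_x}$; because $v_kv_i\mu^{1/2}$ is even it has no projection onto the odd basis elements $\chi_1,\chi_2,\chi_3$, so this quantity, and likewise the factor $\langle v_iv_j\mu^{1/2},f\rangle$ appearing in $\int f\p_t\psi_b$, is $\lesssim\|\ip f\|_{L^2_{x,v}}+\|a\|_{L^2_x}+\|c\|_{L^2_x}$. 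Altogether $\big|\int f\,\p_t\psi_b\big|\lesssim\tfrac1{\e^s}\big(\|\ip f\|_{L^2_{x,v}}+\|a\|_{L^2_x}+\|c\|_{L^2_x}\big)^2$, and splitting the cross terms with $\delta_2$ yields exactly the $\|a\|_{L^2_x}^2$, $\|c\|_{L^2_x}^2$ and $\|\ip f\|_{L^2_{x,v}}^2$ terms on the right of~\eqref{b_estimate}. Collecting everything, integrating over $\tau\in[s,t]$, and absorbing the $o(1)\|b\|_{L^2_x}^2$ remainders into the left-hand side gives~\eqref{b_estimate}.

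The step I expect to be the main obstacle is making $\p_t\psi_b$ rigorous: one must justify that $t\mapsto\phi_b(t)$ is differentiable with $\p_t\phi_b=\phi_{\p_t b}$ in the appropriate weak sense --- which follows from boundedness and linearity of the Poisson solution operator, plus an approximation argument when $f$ is only a weak solution --- and then carefully track the powers of $\e$ so that the $\p_t$ contribution lands precisely at the $\e^{-s}$ scale shared by the other right-hand side terms rather than at a worse negative power. Everything else is a routine transcription of Lemma~\ref{lemma:l6_b} and of the boundary computation leading to~\eqref{K_1_estimate}.
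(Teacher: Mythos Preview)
Your proposal is correct and follows essentially the same route as the paper: construct $\phi_b$ from the symmetric Poisson system with source $\tfrac12 b$, use the test function $\psi_b$ of \eqref{test_b_1}--\eqref{test_b_2}, repeat the transport and boundary computations of Lemma~\ref{lemma:l6_b}, and handle the new time-derivative term via $\Phi_b:=\p_t\phi_b$ solving the system with source $\tfrac12\p_t b$, bounding $\|\Phi_b\|_{H^1_x}$ by $\e^{-s}(\|a\|_{L^2_x}+\|c\|_{L^2_x}+\|\ip f\|_{L^2_{x,v}})$ from the momentum equation. One small caution: your phrasing ``$\|\p_t b\|_{H^{-1}_x}$'' is slightly loose, since the boundary term in $\p_t b_i=-\e^{-s}\sum_k\p_{x_k}\langle v_kv_i\mu^{1/2},f\rangle$ only vanishes when paired against test vectors $\phi$ with $\phi\cdot n=0$ on $\p\O$ (this is exactly why the paper works in the dual of $\mathcal{X}$ rather than $H^{-1}$, and why it checks explicitly that $I_2=0$ via $\Phi_b\cdot n=0$); you should state the dual estimate relative to $\mathcal{X}$ rather than $H^{-1}_x$.
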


}

\begin{proof}

For the proof we rewrite the weak formulation \eqref{steady_f}: for all $t,s \in \R$
\begin{equation}\label{weak_formula} 
\begin{split}
 & \int_{\O\times \mathbb{R}^3} \{\psi f(t)-\psi f(s)\}\dd x \dd v - \int_s^{t}\int_{\O\times \mathbb{R}^3} f \p_\tau \psi \dd x \dd v \dd \tau \\
    & = \frac{1}{\e^s}\int_s^{t}\int_{\O\times \mathbb{R}^3} v \cdot \nabla_x \psi f\dd x \dd v \dd \tau - \frac{1}{\e^s}\int_s^{t} \int_{\gamma} \psi f \dd \gamma \dd \tau \\
    & + \frac{1}{\e^{k+s}}\int_s^t \int_{\O\times \mathbb{R}^3} (\mathcal{L}_\alpha f)\psi \dd v \dd x \dd \tau + \int_s^t \int_{\O\times \mathbb{R}^3} g\psi \dd x \dd v \dd \tau \\
    & := I_1+I_2+I_3+I_4.  
\end{split}
\end{equation}
\hide
{\color{black}This weak formulation holds for both linear operator $\mathcal{L}_\alpha$: Boltzmann ($\alpha= B$ and \eqref{Q_B}) or Landau ($\alpha=L$ and \eqref{Q_L}). The proof is identical for both cases except only $I_3$.}
\unhide

We consider the weak formulation \eqref{weak_formula} with special test functions. First we choose test function as
\begin{equation*}
    \psi_j = \phi({\color{black}t},x)\chi_j \ \  {\color{black}\text{for} \ \  j=1,2,3}. 
\end{equation*}
Then immediately we can conclude that $I_4=0$ {\color{black}from \eqref{Pg=0}}, and $I_3=0$ from~\eqref{collision_inv}. 

{\color{black}Taking $s=t-\triangle$ and $ t=t $, the} LHS of~\eqref{weak_formula} becomes
\begin{align*}
   \text{LHS of~\eqref{weak_formula}}& = \int_{\O} \{ {\color{black} b_j(t ) - b_j(t-\triangle)}\} \phi \dd x. 
\end{align*}
\hide
{\color{black}[[ ALERT!!: Since we will choose $\phi$ depends on the time variable, the second term of LHS including $\p_\tau \psi$ which is not vanishing. You can fix this by choosing $\psi$ depends on $t$ and choose $s=t-\triangle$ and $t=t$. Then $\psi$ does not depend on $\tau$ and by passing a limit $\triangle \rightarrow 0$ we will get what you want.  ]]}
\unhide

Then {\color{black}for each $j=1,2,3,$}
\Be\begin{split}\label{v_nabla_psi}
    & \frac{1}{\e^s}  v\cdot \nabla_x \psi_j =  \frac{1}{\e^s}\sum_{i=1}^3 \p_i \phi(x) v_i v_j \mu^{1/2} \\
    & =  \frac{1}{\e^s} \Big[ \sum_{i=1}^3 \p_i \phi(x) \mu^{1/2}\left(v_iv_j - \frac{\delta_{ij}}{3}|v|^2\right) + \sum_{i=1}^3 \p_i \phi(x) \mu^{1/2} \frac{\delta_{ij}}{3}|v|^2  \Big] \\
    & =  \frac{1}{\e^s} \Big[ \sum_{i=1}^3 \p_i\phi   \hat{A}_{ij}     + \p_j \phi \mu^{1/2} \left(\frac{|v|^2}{3}-1\right) + \p_j \phi \mu^{1/2}  \Big]  \\
    & =  \frac{1}{\e^s} \Big[ \p_j \phi  \left(\chi_0 + \frac{\sqrt{6}}{3}\chi_4\right)+ \sum_{i=1}^3 \p_i \phi \hat{A}_{ij} \Big],
\end{split}
\Ee
{\color{black} where we have used $\hat A_{ij}$ in \eqref{B_ij} and \eqref{fact}.}

For RHS of~\eqref{weak_formula}, from~\eqref{B_ij_property} and \eqref{v_nabla_psi}, we have
\begin{align*}
   I_1 &  =  \frac{1}{\e^s}\int_{t-\triangle}^{t} \int_{\O}  \{\p_j \phi (a+\frac{\sqrt{6}}{3} c ) \} \dd x \dd \tau \\
   & +  \frac{1}{\e^s}\int_{t-\triangle}^{t} \int_{\O} \sum_{i=1}^3  \p_i \phi  \langle \hat{A}_{ij} , (\mathbf{I}-\P)f \rangle \dd x \dd \tau.
\end{align*}
Taking ${\color{black}\triangle} \to 0 $ yields
\begin{align}
  \int_{\O} \phi (t,x) \p_\tau b_j(t,x)   \dd x  &  =   \frac{1}{\e^s}\int_{\O}  \{\p_j \phi (t,x)  (a (t,x)  +\frac{\sqrt{6}}{3} c (t,x)   ) \} \dd x \notag\\
  & + \frac{1}{\e^s} \int_{\O} \sum_{i=1}^3  \p_i \phi  \langle \hat{A}_{ij} ,(\mathbf{I}-\P)f (t,x)  \rangle \dd x - \frac{1}{\e^s}\int_\gamma  f (t,x)  \phi (t,x)  \chi_j   \dd \gamma. \label{delta_to_0}
\end{align}

Let $\Phi_b = (\Phi_b^1, \Phi_b^2, \Phi_b^3)$ solve
\begin{align}
  -\Div(\nablaS \Phi_b)  &    = \p_\tau  b(t) \text{ in }\O \notag
  \\
  \Phi_b \cdot n& = 0 \label{system_p_b} \text{ on }\p\O\\
  \nablaS \Phi_b  n  & = (\nablaS \Phi_b : n \otimes n)n \text{ on }\p\O. \notag 
\end{align}

By~\eqref{angular_momentum_preserve} and Theorem \ref{thm:symmetric_poisson}, there exists a unique solution $\Phi_b \in \mathcal{X}_0$ defined in~\eqref{space} such that $\Vert \Phi_b \Vert_{H^2_x} \lesssim \Vert \p_\tau b\Vert_{L^2_x}$ and $\Phi_b$ satisfies~\eqref{system_p_b} a.e. Choose $\phi = \Phi_b^j$, taking summation in $j=1,2,3$ leads to $I_2 = 0$ by $\Phi_b \cdot n = 0$: 
\begin{align*}
    & \sum_{j=1}^3 \int_{\gamma} f \Phi_b^j \chi_j \dd \gamma = 2 \int_{\p\O} \int_{n\cdot v>0}  (\Phi_b\cdot n) (n\cdot v)^2 \mu^{1/2} f   \dd v \dd S_x =0.
\end{align*}

Combining~\eqref{delta_to_0} with $I_2=0$, we have
\begin{align*}
   \sum_{j=1}^3 \int_{\O}|\nablaS \Phi_b^j|^2 & = -\sum_{j=1}^3 \int_{\O} \Div(\nablaS \Phi_b^j)   \Phi_b^j = \sum_{j=1}^3 \int_{\O}\Phi_b^j \p_\tau b_j \dd x \\
   & \lesssim  \frac{1}{\e^s} \Vert \nabla \Phi_b\Vert_{L^2_x} (\Vert a\Vert_{L^2_x} +\Vert c\Vert_{L^2_x})  + \frac{1}{\e^s} \Vert \nabla \Phi_b\Vert_{L^2_x} \Vert (\mathbf{I}-\P)f\Vert_{L^2_{x,v}}.
\end{align*}
In the first equality we applied the same computation as~\eqref{bdr_vanish}.

Since $\Phi_b \in \mathcal{X}_0$ in~\eqref{space}, applying the Korn's inequality in Theorem \ref{lemma:estimate_nabla} \\
and $P_\O\big(\int_{\O}\nabla^a \Phi_b \dd x\big) = 0$ we have
\begin{align*}
    \Vert \Phi_b\Vert^2_{H^1_x} \lesssim \Vert \nablaS \Phi_b \Vert^2_{L^2_x} = \sum_{j=1}^3 \int_{\O} |\nablaS \Phi_b^j|^2,
\end{align*}
and this leads to
\begin{align}
 \Vert \Phi_b \Vert_{H^1_x}   &   \lesssim   \frac{1}{\e^s}\Big[\Vert a\Vert_{L^2_x}+ \Vert c\Vert_{L^2_x}+ \Vert (\mathbf{I}-\P)f \Vert_{L^2_{x,v}} \Big]. \label{est_Phi_b}
\end{align}

Then we start the estimate of $b$. We rearrange~\eqref{weak_formula} into the following form:
\begin{equation}\label{weak_formula_2}
    \begin{split}
        & - \frac{1}{\e^s}\int_s^t \int_{\O\times \mathbb{R}^3} v\cdot \nabla_x \psi f \dd x \dd v \dd \tau   \\
    & = \int_{\O\times \mathbb{R}^3}\{-\psi f(t) + \psi f(s)\} \dd x \dd v +  \int_s^t \int_{\O\times \mathbb{R}^3} f \p_\tau \psi \dd x \dd v  \dd \tau  - \int_s^t \int_{\gamma} \psi f  \dd \gamma  \dd \tau   \\
    & +   \frac{1}{\e^{s+k}}\int_s^t  \int_{\O\times \mathbb{R}^3} \mathcal{L}_\alpha f \psi  \dd x \dd v \dd \tau + \int_s^t \int_{\O\times \mathbb{R}^3} g \psi \dd  x \dd v \dd \tau  \\
    & := \{G_\psi(t) - G_\psi(s)\} + J_1 + J_2 + J_3 + J_4 .   
    \end{split}
\end{equation}

We consider a variant of \eqref{system_b_5}:
\begin{align}
  -\Div(\nablaS \phi_b)  & = \frac{1}{2}b(\tau) \text{ in }\O \notag \\
  \phi_b \cdot n & = 0  \text{ on } \p\O\label{system_b}\\
  \nablaS \phi_b n& = (\nablaS \phi_b : n\otimes n)n \text{ on } \p\O .\notag
\end{align}
By~\eqref{angular_momentum} and Theorem \ref{thm:symmetric_poisson}, there is a unique solution $\phi_b$ satisfies the system~\eqref{system_b} with
\begin{equation}\label{phi_b_H2}
\Vert \phi_b\Vert_{H^2_x}\lesssim \Vert b\Vert_{L^2_x}.
\end{equation}

{\color{black} Then $G_b$ in \eqref{b_estimate} is defined as
\begin{align}
   G_b(t) &:=  G_{\psi_b}(t) = -\int_{\O \times \mathbb{R}^3} \psi_b f(t) \dd x \dd v. \label{Gb}
\end{align}
We apply H\"older inequality and \eqref{phi_b_H2} to have
\begin{align}
  |G_b(t)|  &  \leq \Vert f(t)\Vert_{L^2_{x,v}} \Vert \psi_b(t)\Vert_{L^2_x} \lesssim \Vert f(t)\Vert_{L^2_{x,v}} \Vert \phi_b(t)\Vert_{H^1_x} \lesssim \Vert f(t)\Vert_{L^2_{x,v}} \Vert b(t)\Vert_{L^2_x} \leq \Vert f(t)\Vert_{L^2_{x,v}}^2.  \label{Gb_bdd}
\end{align}
}

We choose test function as $\psi_b$ given in~\eqref{test_b_1}, with $\phi_b$ given by \eqref{system_b}. Following the same computation in~\eqref{test_b_derivative} and~\eqref{transport_b}, we have
\begin{align*}
  -v\cdot \nabla_x \psi_b  &  =   -\sum_{i,j,k=1}^3 \p_{kj}\phi_b^i (\mathbf{I}-\mathbf{P})(v_iv_jv_k \mu^{1/2}) - \sum_{i=1}^3 \chi_i [\Delta \phi_b^i + \p_i \Div(\phi_b)] \\
  & = -\sum_{i,j,k=1}^3 \p_{kj}\phi_b^i (\mathbf{I}-\mathbf{P})(v_iv_jv_k \mu^{1/2}) + \sum_{i=1}^3 \chi_i b_i.
\end{align*}

Thus LHS of~\eqref{weak_formula_2} becomes
\begin{align}
   LHS & =   \frac{1}{\e^s}\Big\{ \int_s^t  \int_{\O} |b|^2 \dd x \dd \tau -   \underbrace{\sum_{i,j,k=1}^3 \int_s^t  \int_\O \p_{kj}\phi_b^i \langle v_iv_jv_k \mu^{1/2}, (\mathbf{I}-\mathbf{P})f \rangle  \dd x \dd \tau}_{E_4}  \Big\}, \label{estimate_LHS}
   \end{align}
where, by~\eqref{phi_b_H2}, for some $\delta_2\ll 1$, 
\begin{align*}
  |E_4|  & \lesssim    \delta_2 \int_s^t \Vert b\Vert_{L^2_x}^2\dd \tau  + \frac{1}{\delta_2}  \int_s^t \Vert \mu^{1/4}(\mathbf{I}-\mathbf{P})f\Vert_{L^2_{x,v}}^2 \dd \tau. 
\end{align*}

Next we estimate $J_i, 1\leq i\leq 4$ in~\eqref{weak_formula_2}. Note that $\Phi_b = \p_\tau \phi_b$, where we have an estimate of $\Phi_b$ in~\eqref{est_Phi_b}. Applying the representation of $\psi_b$ in~\eqref{test_b_2}, from~\eqref{B_ij_property}, we have 
\begin{align}
 |J_1|   &  \lesssim  \int_s^t  \int_{\O} \big( |c(x)| + |\langle \hat{A}_{ij}, (\mathbf{I}-\mathbf{P})f\rangle|\big) |\nabla_x \Phi_b| \dd x \dd \tau \notag\\
    &\lesssim \e^s \delta_2 \int_s^t \Vert \nabla \Phi_b\Vert^2_{L^2_x} \dd \tau + \frac{1}{\e^s\delta_2} \int_s^t [\Vert c\Vert_{L^2_x}^2+\Vert \mu^{1/4}(\mathbf{I}-\mathbf{P})f\Vert_{L^2_{x,v}}^2] \dd \tau \notag\\
    &\lesssim \frac{\delta_2}{\e^s} \int_s^t \Vert a\Vert_{L^2_x}^2\dd \tau + \frac{1}{\e^s \delta_2} \int_s^t \Vert c\Vert_{L^2_x}^2\dd \tau + \frac{1}{\e^s\delta_2} \int_s^t \Vert \mu^{1/4}(\mathbf{I}-\mathbf{P})f \Vert_{L^2_{x,v}}^2 \dd \tau. \label{J_1_estimate}
\end{align}
In the last line we used~\eqref{est_Phi_b}.

For $J_2$, since~\eqref{system_b} shares the same boundary condition as~\eqref{system_b_5}, the estimate of $J_2$(the boundary contribution), follows from~\eqref{K_1_estimate}: 
\begin{align}
    J_2 = 0. \label{J_2_estimate}
\end{align}

For $J_3$, we apply Young's inequality to have
\begin{align}
  |J_3|  & \lesssim  \frac{1}{\e^{s+k}}\int_s^t \Big[ \frac{1}{\e^k\delta_2} \Vert \mu^{1/4}\mathcal{L}_\alpha f \Vert_{L^2_{x,v}}^2 + \e^k\delta_2 \Vert \nabla \phi_b\Vert^2_{L^2_x} \Big] \dd \tau   \notag\\ 
  & \lesssim \frac{\delta_2}{\e^{s}} \int_s^t \Vert b\Vert_{L^2_x}^2\dd \tau +\frac{1}{\delta_2 \e^{s+2k}} \int_s^t \Vert \mu^{1/4}\mathcal{L}_\alpha f \Vert_{L^2_{x,v}}^2 \dd \tau. \label{J_3_estimate}
\end{align}
Here we used~\eqref{phi_b_H2}.

For $J_4$, similar to the estimate of $J_3$, we have
\begin{align}
  |J_4|  &  \lesssim \frac{\delta_2}{\e^s} \int_s^t \Vert b\Vert_{L^2_x}^2\dd \tau +\frac{\e^s}{\delta_2} \int_s^t \Vert g\Vert_{L^2_{x,v}}^2 \dd \tau. \label{J_4_estimate}
\end{align}

Collecting~\eqref{Gb_bdd},~\eqref{estimate_LHS}, \eqref{J_1_estimate}, \eqref{J_2_estimate}, \eqref{J_3_estimate} and~\eqref{J_4_estimate}, for some constant $C_2$, we conclude~\eqref{b_estimate}.

\end{proof}

\begin{lemma}\label{lemma:a_c}
For any $\delta_1>0$ and some constant $C_1>0$, we have the following estimate for the energy:
\begin{align}
& \frac{1}{\e^s}(1-C_1\delta_1) \int_s^t \Vert c\Vert_{L^2_x}^2\dd \tau  \notag\\
 & \leq G_c(t) - G_c(s) + \frac{C_1 \delta_1}{\e^s} \int_s^t \Vert b\Vert_{L^2_x}^2\dd \tau + \frac{C_1}{\e^s\delta_1}\int_s^t \Vert \mu^{1/4}(\mathbf{I}-\mathbf{P})f\Vert_{L^2_{x,v}}^2 \dd \tau  \notag \\
 & + \frac{C_1}{\e^{s+2k}\delta_1} \int_s^t \Vert \mu^{1/4} \mathcal{L}_\alpha f \Vert_{L^2_{x,v}}^2 \dd \tau + \frac{C_1\e^s}{\delta_1}\int_s^t \Vert g\Vert_{L^2_{x,v}}^2 \dd \tau.  \label{c_estimate}
\end{align}

For any $\delta_3>0$ and some constant $C_3>0$, we have the following estimate for the mass:
\begin{align}
& \frac{1}{\e^s}(1-C_3\delta_3) \int_s^t \Vert a\Vert_{L^2_x}^2\dd \tau  \notag\\
 & \leq G_a(t) - G_a(s)  +  \frac{C_3}{\e^s } \int_s^t \Vert b\Vert_{L^2_x}^2\dd \tau + \frac{C_3}{\e^s \delta_3} \int_s^t \Vert \mu^{1/4}(\mathbf{I}-\mathbf{P})f\Vert_{L^2_{x,v}}^2 \dd \tau\notag \\
 & + \frac{C_3}{\e^{s+2k}\delta_3} \int_s^t \Vert \mu^{1/4}\mathcal{L}_\alpha f \Vert_{L^2_{x,v}}^2 \dd \tau + \frac{C_3\e^s}{\delta_3}\int_s^t \Vert g\Vert_{L^2_{x,v}}^2 \dd \tau.  \label{a_estimate}
\end{align}

{\color{black} Here $|G_a(t)|,|G_c(t)|\lesssim \Vert f(t)\Vert_{L^2_{x,v}}^2$ are defined in \eqref{Ga} and \eqref{Gc} respectively.}

\end{lemma}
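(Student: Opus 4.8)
The plan is to rerun the test function method of Lemma~\ref{lemma:b}, now with the test functions $\psi_c$ of \eqref{test_c} and $\psi_a$ of \eqref{test_a}, the scalar potentials being chosen to solve Neumann problems: $\phi_c$ solves $-\Delta\phi_c=c(\tau)-\frac{1}{|\O|}\int_\O c\,\dd x$ with $\p_n\phi_c=0$ on $\p\O$ and $\int_\O\phi_c\,\dd x=0$, and $\phi_a$ the same problem with source $a(\tau)-\frac{1}{|\O|}\int_\O a\,\dd x$; standard elliptic regularity gives $\|\phi_c\|_{H^2_x}\lesssim\|c\|_{L^2_x}$ and $\|\phi_a\|_{H^2_x}\lesssim\|a\|_{L^2_x}$. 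Inserting $\psi=\psi_c$ into the rearranged weak formulation \eqref{weak_formula_2} and computing $-v\cdot\nabla_x\psi_c$ exactly as in Lemma~\ref{lemma:l6_a_c}, the left-hand side becomes $\frac{5\sqrt6}{3\e^s}\int_s^t\|c\|_{L^2_x}^2\,\dd\tau+\frac{1}{\e^s}E_c$ with $|E_c|\lesssim\delta_1\int_s^t\|c\|_{L^2_x}^2\,\dd\tau+\frac{1}{\delta_1}\int_s^t\|\mu^{1/4}(\mathbf{I}-\mathbf{P})f\|_{L^2_{x,v}}^2\,\dd\tau$, the cross term $\int_\O c\cdot\frac{1}{|\O|}\int_\O c$ being eliminated by the energy conservation \eqref{energy_conserve}; for $a$ the analogous identity uses the mass conservation \eqref{mass_conserve}. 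I then set
\begin{equation}\label{Gc}
G_c(t):=-\int_{\O\times\mathbb{R}^3}\psi_c f(t)\,\dd x\,\dd v,
\end{equation}
\begin{equation}\label{Ga}
G_a(t):=-\int_{\O\times\mathbb{R}^3}\psi_a f(t)\,\dd x\,\dd v,
\end{equation}
and bound $|G_c(t)|\leq\|\psi_c(t)\|_{L^2_{x,v}}\|f(t)\|_{L^2_{x,v}}\lesssim\|\phi_c(t)\|_{H^1_x}\|f(t)\|_{L^2_{x,v}}\lesssim\|f(t)\|_{L^2_{x,v}}^2$, and likewise $G_a$, just as in \eqref{Gb_bdd}.

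The boundary term $J_2$ vanishes in both cases: after a rotation making $n=(1,0,0)$, the $\p_1\phi$-component of $\psi_c$ (resp.\ of $\psi_a$) is killed by the homogeneous Neumann datum $\p_1\phi=0$ on $\p\O$, while the $\p_2\phi$- and $\p_3\phi$-components are odd in $v_1$ and integrate to zero against $(n\cdot v)f$ by the specular symmetry \eqref{specular_f} --- the same parity mechanism used in \eqref{K_1_estimate}. The collision term $J_3$ and the source term $J_4$ are bounded verbatim as in \eqref{J_3_estimate}--\eqref{J_4_estimate}: $\psi_c$ lies in $\mathcal{N}^\perp$, and the $\chi_i$-piece $-5\sum_{i}\p_i\phi_a\,\chi_i$ of $\psi_a$ is annihilated by $\mathcal{L}_\alpha$ and, through \eqref{Pg=0}, by the pairing with $g$, so only $(\mathbf{I}-\mathbf{P})f$ and $(\mathbf{I}-\mathbf{P})g$ enter; Young's inequality with the weight $\|\nabla\phi_c\|_{L^2_x}\lesssim\|c\|_{L^2_x}$ (resp.\ $\|a\|_{L^2_x}$) then produces the $\frac{1}{\delta\e^{s+2k}}\int\|\mu^{1/4}\mathcal{L}_\alpha f\|^2$ and $\frac{\e^s}{\delta}\int\|g\|^2$ terms.

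The only genuinely new point --- and the main obstacle --- is the temporal term $J_1=\int_s^t\int f\,\p_\tau\psi$, which is the $a,c$ analogue of the $\Phi_b$ estimate \eqref{system_p_b}--\eqref{est_Phi_b}. Since $\p_\tau\psi_c=\sqrt{10}\sum_{i}\p_i(\p_\tau\phi_c)\hat B_i$, I need an $\e^{-s}$ bound for $\|\nabla\p_\tau\phi_c\|_{L^2_x}$; differentiating the Neumann problem in $\tau$ gives $\|\nabla\p_\tau\phi_c\|_{L^2_x}^2=\int_\O\p_\tau c\,\p_\tau\phi_c\,\dd x$, and I evaluate $\int_\O\phi\,\p_\tau c\,\dd x$ by testing the weak formulation \eqref{weak_formula} with $\phi(\tau,x)\chi_4$ and passing to the limit exactly as in \eqref{delta_to_0}: $I_3=I_4=0$ by \eqref{collision_inv}--\eqref{Pg=0}, the boundary term vanishes by the parity of $\chi_4$ under specular reflection, and $v\cdot\nabla_x(\phi\chi_4)$ splits via \eqref{fact} into a $\nabla\phi$-combination of the $\hat B_i$'s (which pairs only with $(\mathbf{I}-\mathbf{P})f$) plus a $\nabla\phi$-combination of the $\chi_i$'s (which produces $b$). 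This yields $\|\nabla\p_\tau\phi_c\|_{L^2_x}\lesssim\e^{-s}\big(\|b\|_{L^2_x}+\|\mu^{1/4}(\mathbf{I}-\mathbf{P})f\|_{L^2_{x,v}}\big)$, and since $\hat B_i\in\mathcal{N}^\perp$ only $(\mathbf{I}-\mathbf{P})f$ enters $J_1$ itself, so $|J_1|\lesssim\frac{1}{\e^s}\int_s^t(\|b\|+\|\mu^{1/4}(\mathbf{I}-\mathbf{P})f\|)\|\mu^{1/4}(\mathbf{I}-\mathbf{P})f\|\,\dd\tau\lesssim\frac{\delta_1}{\e^s}\int_s^t\|b\|_{L^2_x}^2\,\dd\tau+\frac{C}{\delta_1\e^s}\int_s^t\|\mu^{1/4}(\mathbf{I}-\mathbf{P})f\|_{L^2_{x,v}}^2\,\dd\tau$. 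For $a$ the same scheme with test function $\phi\chi_0$ --- for which $v\cdot\nabla_x(\phi\chi_0)=\sum_{i}\p_i\phi\,\chi_i$ with no $(\mathbf{I}-\mathbf{P})$-remainder --- gives $\|\nabla\p_\tau\phi_a\|_{L^2_x}\lesssim\e^{-s}\|b\|_{L^2_x}$; but $\p_\tau\psi_a$ now carries the $\chi_i$-piece, which pairs with $f$ to produce $b_i$, so $|J_1|\lesssim\frac{1}{\e^s}\int_s^t\|b\|(\|b\|+\|\mu^{1/4}(\mathbf{I}-\mathbf{P})f\|)\,\dd\tau$, which is exactly why \eqref{a_estimate} keeps $\frac{C_3}{\e^s}\int\|b\|^2$ with no $\delta_3$-gain. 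Finally, collecting $\{G_c(t)-G_c(s)\}+J_1+J_2+J_3+J_4$ against $\frac{5\sqrt6}{3\e^s}\int_s^t\|c\|^2+\frac{1}{\e^s}E_c$ and absorbing the $\delta_1\|c\|^2$ terms to the left gives \eqref{c_estimate}; the identical bookkeeping for $a$ (absorbing $\delta_3\|a\|^2$) gives \eqref{a_estimate}.
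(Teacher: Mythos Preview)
Your proposal is correct and follows essentially the same route as the paper's own proof: both first test the weak formulation with $\phi\chi_4$ (resp.\ $\phi\chi_0$) to obtain the $H^1$ control \eqref{estimate_c_1} (resp.\ \eqref{estimate_a_1}) on the time-derivative potential $\Phi_c=\p_\tau\phi_c$ (resp.\ $\Phi_a$), then insert the test functions $\psi_c$, $\psi_a$ of \eqref{test_c}, \eqref{test_a} into \eqref{weak_formula_2} and estimate $J_1$--$J_4$ exactly as you describe. The only cosmetic difference is that the paper writes the Neumann source directly as $c(t)$ (resp.\ $a(t)$), the solvability condition $\int_\O c=0$ being already guaranteed by \eqref{energy_conserve}, whereas you subtract the average first and then invoke conservation to kill the resulting cross term --- these are of course equivalent.
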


\begin{proof}
We use the weak formulation~\eqref{weak_formula} and~\eqref{weak_formula_2}.

\textit{Proof of~\eqref{c_estimate}}.
We choose $\psi = \phi(x)\chi_4$ in~\eqref{weak_formula}. Then the transport operator on $\psi$ becomes
\begin{equation}\label{transport_c_1}
v\cdot \nabla_x \psi =  \sum_{i=1}^3 \frac{\sqrt{6}}{3} \p_i \phi \chi_i + \sum_{i=1}^3  \frac{\sqrt{15}}{3} \p_i \phi \hat{B}_i.
\end{equation}
Take $[s,t] = [t-\triangle,t]$, LHS of~\eqref{weak_formula} becomes
\[LHS = \int_{\O} \{ c(t) - c(t-\triangle)\} \phi \dd x.\]
By~\eqref{transport_c_1} we have
\[I_1 = \frac{1}{\e^s}\int^t_{t-\triangle} \int_\O \Big\{\frac{\sqrt{6}}{3}(b\cdot \nabla_x \phi) + \frac{1}{\e^s}\sum_{i=1}^3  \frac{\sqrt{15}}{3} \p_i \phi \langle \hat{B}_i, (\mathbf{I}-\mathbf{P})f \rangle \Big\} \dd x \dd \tau,\]
while $I_2=I_3=I_4 = 0$. Choosing $\phi=1$ leads to
\[\int_\O \p_\tau c \dd x =0.\]

Then for fixed $t$ we choose $\phi = \Phi_c$ such that
\begin{align*}
  -\Delta \Phi_c  & = \p_\tau c(t) \text{ in } \O , \ 
  \frac{\p \Phi_c }{\p n}  = 0  \text{ on } \p\O , \ 
    \int_{\O} \Phi_c \dd x = 0.
\end{align*}
This leads to
\[\int_\O |\nabla_x \Phi_c|^2 \dd x = \int_{\O}\Phi \p_\tau c \dd x  \lesssim \frac{1}{\e^s}\Vert \nabla \Phi_c\Vert_{L^2_x} (\Vert b\Vert_{L^2_x}+ \Vert \mu^{1/4}(\mathbf{I}-\P)f\Vert_{L^2_{x,v}}).\]
Thus from the standard Poincaré inequality we conclude
\begin{equation}\label{estimate_c_1}
\Vert \Phi_c\Vert_{H^1_x} \lesssim  \frac{1}{\e^s}(\Vert b\Vert_{L^2_x}+ \Vert \mu^{1/4}(\mathbf{I}-\P)f\Vert_{L^2_{x,v}} ).
\end{equation}

Then we use the formulation in~\eqref{weak_formula_2}. We let $\phi_c$ be a solution of the following problem
\begin{align}
  -\Delta \phi_c   = c(t) \text{ in } \O  , \ 
  \frac{\p \phi_c }{\p n}   = 0  \text{ on } \p\O, \ 
    \int_{\O} \phi_c \dd x = 0. \label{l2_phi_c}
\end{align}
Then we use the same test function $\psi_c$ as~\eqref{test_c} with $\phi_c$ given by \eqref{l2_phi_c}. 

{\color{black} Then $G_c(t)$ in \eqref{c_estimate} is defined as
\begin{align}
   G_c(t):= & G_{\psi_c}(t) = -\int_{\O\times \mathbb{R}^3} \psi_c(t) f(t) \dd x \dd v. \label{Gc}
\end{align}
Applying H\"older inequality and elliptic theory,
\begin{align*}
    |G_c(t)|& \lesssim \Vert f(t)\Vert_{L^2_{x,v}} \Vert \phi_c(t)\Vert_{H^1_x} \lesssim \Vert f(t)\Vert_{L^2_{x,v}}\Vert c(t)\Vert_{L^2_x} \lesssim \Vert f(t)\Vert_{L^2_{x,v}}^2.
\end{align*}

}

A direct computation leads to
\begin{align}
   -v\cdot \nabla_x \psi_c &  = -\frac{5\sqrt{6}}{3}\Delta \phi_c \chi_4  - \sum_{i,j=1}^3 \p_{ij}^2\phi_c (\mathbf{I}-\P)(v_iv_j (|v|^2-5)\mu^{1/2}).\notag
\end{align}
Thus the LHS of~\eqref{weak_formula_2} is
\begin{align*}
  LHS  &  = \frac{5\sqrt{6}}{3\e^s} \int_s^t \int_{\O} c^2 \dd x\dd \tau - \frac{1}{\e^s}\sum_{i,j=1}^3 \int_s^t  \int_{\O} \p_{ij}^2 \phi_c  \langle (\mathbf{I}-\mathbf{P})f, v_iv_j (|v|^2-5)\mu^{1/2}\rangle \dd x \dd \tau \\
  & = \frac{5\sqrt{6}}{3\e^s}\int_s^t \int_\O c^2 \dd x \dd \tau  + E_3,
\end{align*}
where, for any $\delta_1>0$,
\[|E_3|\lesssim \frac{\delta_1}{\e^s}\int_s^t \Vert c\Vert_{L^2_x}^2\dd \tau + \frac{1}{\e^s \delta_1}\int_s^t \Vert \mu^{1/4}(\mathbf{I}-\P)f\Vert_{L^2_{x,v}}^2 \dd \tau.\]
For $J_1$, using the fact that $\Phi_c = \p_\tau \phi_c $ and the estimate for $\Phi_c $ in~\eqref{estimate_c_1} we have
\begin{align*}
  |J_1|  &  \lesssim  \delta_1\e^s\int_s^t \Vert \Phi_c \Vert^2_{H^1_x} \dd \tau  + \frac{1}{\delta_1\e^s} \int_s^t \Vert \mu^{1/4}(\mathbf{I}-\P)f\Vert_{L^2_{x,v}}^2  \dd \tau\\
  & \lesssim \frac{\delta_1}{\e^s} \int_s^t \Vert b\Vert_{L^2_x}^2\dd \tau + \frac{1}{\delta_1\e^s} \int_s^t \Vert \mu^{1/4}(\mathbf{I}-\P)f\Vert_{L^2_{x,v}}^2  \dd \tau.
\end{align*}
Applying the boundary condition of $\phi_c$ and $f$ we will have $J_2=0$. The estimate of $J_3$ and $J_4$ are the same of~\eqref{J_3_estimate} and~\eqref{J_4_estimate} with replacing $b$ by $c$. Then we conclude~\eqref{c_estimate}.

\textit{Proof of~\eqref{a_estimate}}. We choose $\psi = \phi(x)\chi_0$. Direct computation yields
\begin{align*}
 v\cdot \nabla_x \psi   &   =\sum_{i=1}^3 \p_i \phi \chi_i .
\end{align*}
Then in~\eqref{weak_formula},
\[I_1 = \frac{1}{\e^s}\int_{t-\triangle}^t \int_{\O} (b\cdot \nabla_x \phi) \dd x \dd \tau,\]
while $I_2=I_3=I_4 = 0$.

We take $[s,t] = [t-\triangle,t]$ in~\eqref{weak_formula}, then LHS becomes 
\[\int_{\O} \{ a(t) - a(t-\triangle)\} \phi \dd x.\]
Taking $\triangle \to 0$ yields
\[\int_{\O} \phi \p_\tau a \dd x = \frac{1}{\e^s}\int_\O (b\cdot \nabla_x \phi) \dd x.\]
For $\phi\in H^1_x$, we have
\begin{align*}
  \Big|\int_\O \phi \p_\tau a  \dd x \Big|  &  \lesssim \frac{1}{\e^s}\Vert b\Vert_{L^2_x}\Vert \phi\Vert_{H^1_x} ,
\end{align*}
which leads to
\begin{align*}
  \Vert \p_\tau a \Vert_{H^{-1}_0(\O)}  &   \lesssim \frac{1}{\e^s}\Vert b\Vert_{L^2_x}.
\end{align*}

Setting $\phi=1$ leads to $\int_\O \p_\tau a \dd x = 0$, which is the solvability condition for the system
\begin{align*}
  -\Delta \Phi_a   = \p_\tau a(t) \text{ in } \O  , \ 
  \frac{\p \Phi_a }{\p n}   = 0  \text{ on } \p\O ,\ 
    \int_{\O} \Phi_a \dd x  = 0.
\end{align*}
For such test function we have
\begin{equation}\label{estimate_a_1}
\Vert \Phi_a\Vert_{H^1_x}\lesssim \Vert \p_\tau a\Vert_{H_0^{-1}(\O)} \lesssim \frac{1}{\e^s}\Vert b\Vert_{L^2_x}.
\end{equation}

Next we choose $\phi_a$ to be the solution of the following problem:
\begin{align}
  -\Delta \phi_a   = a(t) \text{ in } \O  , \  
  \frac{\p \phi_a }{\p n}   = 0  \text{ on } \p\O , \ 
    \int_{\O} \phi_a \dd x  = 0. \label{l2_phi_a}
\end{align}
Then we use the same test function $\psi_a$ as~\eqref{test_a}, with $\phi_a$ given by \eqref{l2_phi_a}. 

{\color{black} Then $G_a(t)$ in \eqref{a_estimate} is defined as
\begin{align}
    & G_a(t):= G_{\psi_a}(t) = -\int_{\O\times \mathbb{R}^3} \psi_a(t)f(t) \dd x \dd v. \label{Ga}
\end{align}
By H\"older inequality and elliptic theory, we have
\begin{align*}
    &|G_a(t)|\lesssim \Vert f(t) \Vert_{L^2_{x,v}} \Vert \phi_a(t)\Vert_{H^1_x} \lesssim \Vert f(t)\Vert_{L^2_{x,v}} \Vert a(t)\Vert_{L^2_x} \leq \Vert f(t)\Vert_{L^2_{x,v}}^2.
\end{align*}
}

Direct computation yields
\begin{align}
  -v\cdot \nabla_x \psi  & =-5 \Delta \phi_a  \chi_0 - \sum_{i,j=1}^3 \p^2_{ij} \phi_a (\mathbf{I}-\mathbf{P})(v_iv_j(|v|^2-10)\mu^{1/2})  . \notag
\end{align}
Thus LHS of~\eqref{weak_formula_2} becomes 
\begin{align*}
  LHS  &  = \frac{5}{\e^s} \int_s^t \Vert a\Vert_{L^2_x}^2\dd \tau - \frac{1}{\e^s}\sum_{i,j=1}^3 \int_s^t \int_{\O} \p_{ij}^2 \phi_a \langle v_iv_j (|v|^2-10)\mu^{1/2},(\mathbf{I}-\mathbf{P})f\rangle \dd x \dd \tau\\
  & = \frac{5}{\e^s} \int_s^t \Vert a\Vert_{L^2_x}^2\dd \tau + E_5,
\end{align*}
where, for any $\e_3>0$,
\begin{align*}
  |E_5|  & \lesssim \frac{\e_3^2}{\e^s} \int_s^t \Vert a\Vert_{L^2_x}^2\dd \tau  + \frac{1}{\e_3^2 \e^s} \int_s^t \Vert \mu^{1/4}(\mathbf{I}-\P)f\Vert_{L^2_{x,v}}^2 \dd \tau.
\end{align*}
For $J_1$ in~\eqref{weak_formula_2}, from $\Phi_a = \p_\tau \phi_a$ and~\eqref{estimate_a_1}, we have
\begin{align*}
  |J_1|  &  \lesssim  \e^s\int_s^t \Vert \Phi_a \Vert^2_{H^1_x} \dd \tau + \frac{1}{\e^s} \int_s^t \Vert b\Vert_{L^2_x}^2 \dd \tau + \frac{1}{\e^s} \int_s^t \Vert \mu^{1/4}(\mathbf{I}-\P)f\Vert_{L^2_{x,v}}^2  \dd \tau\\
  & \lesssim \frac{1}{\e^s} \int_s^t \Vert b\Vert_{L^2_x}^2\dd \tau + \frac{1}{\e^s} \int_s^t \Vert \mu^{1/4}(\mathbf{I}-\P)f\Vert_{L^2_{x,v}}^2  \dd \tau .
\end{align*}
The boundary condition of $\Phi_a$ and $f$ lead to $J_2=0$. The estimate of $J_3$ and $J_4$ are the same of~\eqref{J_3_estimate} and~\eqref{J_4_estimate} with replacing $b$ by $a$. Then we conclude~\eqref{a_estimate}.

\end{proof}

\subsection{Theorem \ref{thm:coercive}, Corollary \ref{coro:boltzmann} and Corollary \ref{coro:landau}.}\label{sec:4.1}

Theorem \ref{thm:coercive} follows by combining Lemma \ref{lemma:b} and Lemma \ref{lemma:a_c}.

\begin{proof}[\textbf{Proof of Theorem \ref{thm:coercive}}]
We choose 
\[\delta_1 = (2^14 C_1 C_2^2 C_3^2)^{-1}, \ \delta_2 = (2^8 C_2 C_3^2)^{-1}, \ \delta_3=(4C_3)^{-1}.\]
We set 
\[\delta_a = (2^8C_2 C_3^2)^{-1}, \ \delta_b = (2^{11/2}C_2C_3)^{-1}.\]
A direct computation of $\delta_a \times \eqref{a_estimate} + \delta_b \times\eqref{b_estimate} + \eqref{c_estimate}$ yields
\begin{align*}
    &\frac{\delta_a}{4\e^s}  \int_s^t (\Vert a\Vert_{L^2_x}^2+ \Vert b\Vert_{L^2_x}^2+ \Vert c\Vert^2_{L^2_x}) \dd \tau \\
    & \leq G(t)-G(s) + \frac{1}{\e^s } \int_s^t \Vert \mu^{1/4}(\mathbf{I}-\mathbf{P})f\Vert_{L^2_{x,v}}^2  \dd \tau+ \frac{1}{ \e^{s+2k}} \int_s^t \Vert \mu^{1/4}\mathcal{L}_\alpha f \Vert_{L^2_{x,v}}^2 \dd \tau + C\e^s \int_s^t \Vert g\Vert_{L^2_{x,v}}^2 \dd \tau,
\end{align*}
where $C=2^{14}C_1^2 C_2^2 C_3^2$ and
\begin{align}
    & |G(t)| = |G_a(t)+G_b(t)+G_c(t)| = \Big|\int_\O -(\psi_a + \psi_b +\psi_c)f(t) \dd x \Big| \lesssim \Vert f(t)\Vert_{L^2_{x,v}}^2. \label{G_def}
\end{align}
Then we conclude the theorem.

\end{proof}

\begin{proof}[\textbf{Proof of Corollary \ref{coro:boltzmann}}]
This corollary follows directly from the control of the linearized Boltzmann operator:
\begin{align*}
    & \Vert \mathcal{L}_B f \Vert_{L^2_{x,v}} = \Vert \mathcal{L}_B(\mathbf{I}-\mathbf{P})f\Vert_{L^2_{x,v}} \lesssim \Vert \nu (\mathbf{I}-\mathbf{P})f\Vert_{L^2_{x,v}},
\end{align*}
where $\nu(v) =\sqrt{1+|v|^2}$ and $\nu(v)\mu^{1/4}(v)\lesssim 1$.
    
\end{proof}

To prove Corollary \ref{coro:landau}, we first cite several estimates about the Landau operator and weighted norm $\Vert \cdot \Vert_\sigma$ from \cite{Guo_landau_box}.

First we define the norm with respect to only velocity variable as
\begin{align*}
    | f|_\sigma^2 &:=\int_{ \mathbb{R}^3} \Big[ \sigma^{ij} \p_{v_i} f\p_{v_j} f  +\sigma^{ij} v_i v_j f^2 \Big]  \dd v ,\\
     |f|_{L^2}^2 & := \int_{\mathbb{R}^3} |f|^2 \dd v.
\end{align*}

\begin{lemma}[Corollary 1 in \cite{Guo_landau_box}]\label{lemma:sigma_lower}

There exists $c>0$ such that
\begin{align*}
   | f| ^2_{\sigma} & \geq c | [1+|v|]^{-\frac{1}{2}} f |_{L^2}^2 + c | [1+|v|]^{-3/2}\p_i f|_{L^2}^2 .
\end{align*}

\end{lemma}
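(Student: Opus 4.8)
The plan is to reduce the inequality to the pointwise spectral structure of the diffusion matrix $\sigma^{ij}(v)=(\phi^{ij}\ast\mu)(v)$ from \eqref{sigma}. Since $\phi^{ij}(z)=|z|^{-1}(\delta_{ij}-z_iz_j/|z|^2)$, for every $\xi\in\R^3$ one has the elementary representations
\begin{equation*}
\sigma^{ij}(v)\xi_i\xi_j=\int_{\R^3}\frac{|\xi|^2|v-u|^2-(\xi\cdot(v-u))^2}{|v-u|^{3}}\,\mu(u)\,\dd u=\int_{\R^3}\frac{|\xi|^2\sin^2\angle(\xi,v-u)}{|v-u|}\,\mu(u)\,\dd u,
\end{equation*}
and, using the algebraic identity $|v|^2|v-u|^2-(v\cdot(v-u))^2=|u|^2|v-u|^2-(u\cdot(v-u))^2$ (obtained by writing $v=(v-u)+u$), the specialization $\xi=v$ gives $\sigma^{ij}(v)v_iv_j=\int_{\R^3}|v-u|^{-1}|u|^2\sin^2\angle(u,v-u)\,\mu(u)\,\dd u$. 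From these two formulas I would recover the classical Degond--Lemou / Guo bounds: $\sigma^{ij}$ is smooth, symmetric and positive definite; its smallest eigenvalue, attained in the direction of $v$ once $|v|\ge1$, satisfies $\sigma^{ij}(v)\xi_i\xi_j\ge c_0(1+|v|)^{-3}|\xi|^2$ for all $v,\xi$; and on $\{|v|\ge1\}$ the sharper estimate $\sigma^{ij}(v)v_iv_j\ge c_0(1+|v|)^{-1}$ holds.

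Granting these, the gradient term is immediate: taking $\xi=\nabla_v f(v)$ in the first estimate and integrating in $v$,
\begin{equation*}
\int_{\R^3}\sigma^{ij}\p_{v_i}f\,\p_{v_j}f\,\dd v\;\ge\;c_0\int_{\R^3}(1+|v|)^{-3}|\nabla_v f|^2\,\dd v\;=\;c_0\sum_{i=1}^3\big|\,[1+|v|]^{-3/2}\p_i f\,\big|_{L^2}^2 .
\end{equation*}
For the zeroth order term I split $\R^3=\{|v|\ge1\}\cup\{|v|<1\}$. On $\{|v|\ge1\}$ the sharper bound yields $\int_{|v|\ge1}\sigma^{ij}v_iv_jf^2\,\dd v\ge c_0\int_{|v|\ge1}(1+|v|)^{-1}f^2\,\dd v$, which is the outer piece of the target. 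On the bounded region $\sigma^{ij}v_iv_j$ degenerates like $|v|^2$ as $v\to0$, so instead I use uniform ellipticity of $\sigma$ on the ball $\{|v|\le2\}$: there $|f|_\sigma^2$ controls $\int_{B_2}|\nabla_v f|^2$, while on the annulus $\{1\le|v|\le2\}$ the form $\sigma^{ij}v_iv_j$ is bounded below by a positive constant (it is continuous, positive, and $v\neq0$), so $\int_{1\le|v|\le2}f^2\,\dd v\lesssim|f|_\sigma^2$. A Poincar\'e inequality on $B_2$ with this annular average then gives $\int_{|v|<1}f^2\,\dd v\lesssim|f|_\sigma^2$, whence $\int_{|v|<1}(1+|v|)^{-1}f^2\,\dd v\lesssim|f|_\sigma^2$. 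Adding the two regions and recombining with the gradient estimate yields the claimed lower bound for $|f|_\sigma^2$.

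The only genuinely technical point is the derivation of the pointwise asymptotics of $\sigma^{ij}$, especially the uniform lower bound $\sigma^{ij}(v)\xi_i\xi_j\gtrsim(1+|v|)^{-3}|\xi|^2$. This is proved by decomposing the $u$-integral: on $\{|u|\le|v|/2\}$ one has $|v-u|\sim|v|$, and one must show $\int_{|u|\le|v|/2}\sin^2\angle(\xi,v-u)\,\mu(u)\,\dd u$ is bounded below after normalization --- the worst case $\xi\parallel v$ forcing the $|v|^{-3}$ rate, since then $\sin^2\angle(\xi,v-u)\sim|u_\perp|^2/|v|^2$; on the complementary region $\{|u|>|v|/2\}$ the Gaussian weight makes the contribution exponentially small in $|v|$. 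This is precisely the computation of \cite{Guo_landau_box}; once it is in hand, the rest is the elementary two-region argument above.
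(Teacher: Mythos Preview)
The paper does not give its own proof of this lemma: it is quoted verbatim as Corollary~1 of \cite{Guo_landau_box} and used as a black box in the subsequent estimate of~\eqref{L_upper_bound}. So there is no in-paper argument to compare against.

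Your proposal is correct and is essentially the argument in \cite{Guo_landau_box}. The pointwise spectral bounds you invoke---$\sigma^{ij}(v)\xi_i\xi_j\gtrsim(1+|v|)^{-3}|\xi|^2$ and $\sigma^{ij}(v)v_iv_j\gtrsim(1+|v|)^{-1}$ for $|v|\ge1$---are exactly the content of Lemma~3 there, and the algebraic identity $|v|^2|v-u|^2-(v\cdot(v-u))^2=|u|^2|v-u|^2-(u\cdot(v-u))^2$ you use is the standard trick for the $\xi=v$ direction. The gradient part of $|f|_\sigma^2$ then gives the derivative bound directly, and the outer piece $|v|\ge1$ of the zeroth order term follows from the second spectral bound. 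Your treatment of the inner region $|v|<1$ via uniform ellipticity on $B_2$, control of $\int_{1\le|v|\le2}f^2$ from the potential term, and a Poincar\'e inequality on $B_2$ with average taken on the annulus is a clean way to handle the degeneracy of $\sigma^{ij}v_iv_j$ at the origin; this is the same mechanism as in Guo's proof, stated slightly differently. The only point worth tightening in a full write-up is the precise form of that Poincar\'e step (e.g.\ apply Poincar\'e--Wirtinger to $f-\fint_A f$ on $B_2$, using $\int_A(f-\fint_A f)=0$), but the idea is sound.
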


\begin{lemma}[Lemma 1 and Lemma 5 in \cite{Guo_landau_box}]

The linear operator can be expressed as $\mathcal{L}_L=-A-K$, where
\begin{equation}\label{A_K}
\begin{split}
    Af & = \p_i [\sigma^{ij} \p_j f] - \sigma^{ij} v_i v_j f + \p_i \sigma^i f,\\
    Kf & = -\mu^{-1/2} \p_i \Big\{\mu\big[ \phi * \big\{\mu^{1/2}[\p_j f + v_j f] \big\}\big] \Big\}.    
\end{split}
\end{equation}
Here $\sigma^i := \sum_j \sigma^{ij}$, where $\sigma^{ij}$ is defined in~\eqref{sigma}.

Moreover, for any $m>1$, there exists $C(m)>0$ such that
\begin{equation}\label{A_K_upper}
\begin{split}
 & |\langle w^{2\theta}\p_i \sigma^i f,g\rangle| + |\langle w^{2\theta}Kf,g\rangle|    \\
 & \leq \frac{C}{m}| w^\theta f|_\sigma | w^\theta g|_\sigma + C(m) \Big\{\int_{|v|\leq C(m)} w^{2\theta}|f|^2 \dd v \Big\}^{1/2} \Big\{ \int_{|v|\leq C(m)} w^{2\theta}|g|^2 \dd v \Big\}^{1/2}.    
\end{split}
\end{equation}
Here $w^\theta$ is a weight defined as $w^\theta(v) = [1+|v|]^{\theta}$.

Hence
\begin{align}
  |\langle \mathcal{L}_L f , \mu^{1/4} g\rangle|  & = |\langle \mathcal{L}_L (\mathbf{I}-\mathbf{P})f,\mu^{1/4} g \rangle | \lesssim o(1)| g|_\sigma^2 + | (\mathbf{I}-\mathbf{P})f|_\sigma^2.\label{L_upper_bound}
\end{align}

\end{lemma}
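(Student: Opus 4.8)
The plan is to establish, following Guo \cite{Guo_landau_box}, the three assertions in turn: the splitting $\mathcal{L}_L=-A-K$ with the formulas in \eqref{A_K}, the weighted bound \eqref{A_K_upper}, and then its consequence \eqref{L_upper_bound}.

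\textit{Splitting.} Compute directly from \eqref{L_alpha} and \eqref{Q_L}. In $Q_L(\mu,\sqrt\mu f)$ the term containing $\nabla_v\mu(u)$ vanishes, since it differentiates in $v$ a function of $u$ alone, leaving $\nabla_v\cdot\{(\phi*\mu)(v)\,\nabla_v(\sqrt\mu f)(v)\}=\partial_i\{\sigma^{ij}\partial_j(\sqrt\mu f)\}$ with $\sigma^{ij}$ as in \eqref{sigma}; in $Q_L(\sqrt\mu f,\mu)$ the term containing $\nabla_v(\sqrt\mu f)(u)$ vanishes for the same reason, leaving $\partial_i\{(\partial_j\mu)(v)\,(\phi^{ij}*(\sqrt\mu f))(v)\}$. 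Multiplying by $-\mu^{-1/2}$, expanding $\partial_j\sqrt\mu=-\tfrac{v_j}{2}\sqrt\mu$ and $\partial_j\mu=-v_j\mu$, and using $\partial_{v_i}\phi^{ij}(v-u)=-\partial_{u_i}\phi^{ij}(v-u)$ together with an integration by parts in $u$ that moves the derivatives onto $\mu(u)$, the contributions separate: those acting on $f$ by differentiation assemble into $Af=\partial_i[\sigma^{ij}\partial_j f]-\sigma^{ij}v_iv_jf+\partial_i\sigma^i f$, and those acting through the convolution $\phi^{ij}*(\mu^{1/2}[\partial_j+v_j]f)$ assemble into $Kf$, giving \eqref{A_K}. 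This is the only genuinely computational step and it is routine once the two cancellations are used.

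\textit{The bound \eqref{A_K_upper}.} First record the standard pointwise estimates: $\sigma^{ij}$ is smooth and nonnegative as a matrix, with eigenvalues of order $(1+|v|)^{-1}$ transverse to $v$ and $(1+|v|)^{-3}$ along $v$, while $\partial_k\sigma^{ij}$ and $\partial_i\sigma^i$ decay like $(1+|v|)^{-2}$; these follow from differentiating the Gaussian convolution. Since $\partial_i\sigma^i$ decays, $|\langle w^{2\theta}\partial_i\sigma^i f,g\rangle|\lesssim\|(1+|v|)^{-1}w^\theta f\|_{L^2}\|(1+|v|)^{-1}w^\theta g\|_{L^2}$, and by the weighted analogue of Lemma \ref{lemma:sigma_lower} these factors are bounded by $|w^\theta f|_\sigma$ and $|w^\theta g|_\sigma$; splitting the $v$-integral into $\{|v|\ge C(m)\}$ and $\{|v|\le C(m)\}$ and using the extra decay on the far set converts this into the two terms on the right of \eqref{A_K_upper}. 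For $K$, write out its kernel $\mathbf{k}(v,v')$ from \eqref{A_K} by performing the convolution; a Grad-type estimate yields a symmetric kernel bounded by $|v-v'|^{-1}$ times a factor decaying exponentially in $|v-v'|$ and in $\big||v|^2-|v'|^2\big|/|v-v'|$. Distributing $w^{2\theta}$, splitting the $v'$-integral into $\{|v-v'|\le 1/m\}$, $\{|v'|\ge m\}$, and the remaining bounded region, and applying Cauchy--Schwarz with the kernel bound and Lemma \ref{lemma:sigma_lower} gives precisely the $\tfrac{C}{m}|w^\theta f|_\sigma|w^\theta g|_\sigma$-plus-local-$L^2$ form claimed.

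\textit{The consequence \eqref{L_upper_bound}.} Since $\mathcal{L}_L\mathbf{P}=0$, replace $f$ by $(\mathbf{I}-\mathbf{P})f$ and write the pairing as $-\langle A(\mathbf{I}-\mathbf{P})f,\mu^{1/4}g\rangle-\langle K(\mathbf{I}-\mathbf{P})f,\mu^{1/4}g\rangle$. An integration by parts in the leading term of $A$ gives $\langle\sigma^{ij}\partial_j(\mathbf{I}-\mathbf{P})f,\partial_i(\mu^{1/4}g)\rangle$, to which Cauchy--Schwarz for the nonnegative form $\sigma^{ij}$ applies, bounding it by $|(\mathbf{I}-\mathbf{P})f|_\sigma\,|\mu^{1/4}g|_\sigma\lesssim|(\mathbf{I}-\mathbf{P})f|_\sigma\,|g|_\sigma$, where the last step uses that the Gaussian $\mu^{1/4}$ dominates the at-most-linear growth of $\sigma$, so $|\mu^{1/4}g|_\sigma\lesssim|g|_\sigma$; Young's inequality with a small parameter---the ``$o(1)$''---then produces $o(1)|g|_\sigma^2+|(\mathbf{I}-\mathbf{P})f|_\sigma^2$. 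The remaining terms $-\langle\sigma^{ij}v_iv_j(\mathbf{I}-\mathbf{P})f,\mu^{1/4}g\rangle$, $-\langle\partial_i\sigma^i(\mathbf{I}-\mathbf{P})f,\mu^{1/4}g\rangle$ and the $K$-term are estimated by \eqref{A_K_upper} after absorbing $\mu^{1/4}$ into $g$ (so $\theta=0$): the $\tfrac{C}{m}$ part is put into $o(1)|g|_\sigma^2$ by taking $m$ large, and the $\{|v|\le C(m)\}$ part is $\lesssim|(\mathbf{I}-\mathbf{P})f|_\sigma|g|_\sigma$ by Lemma \ref{lemma:sigma_lower}, again followed by Young. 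The main obstacle is the second step: establishing the Grad-type pointwise bound on $\mathbf{k}(v,v')$ and the sharp decay of $\sigma^{ij}$ and its derivatives, and organizing the three-region splitting of the $K$-integral so that the gain $\tfrac{C}{m}$ actually emerges; the rest is bookkeeping with Cauchy--Schwarz and Lemma \ref{lemma:sigma_lower}.
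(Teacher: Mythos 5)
Your proposal is correct and, for the one assertion the paper actually proves here (the consequence \eqref{L_upper_bound}), it follows essentially the same route: reduce via $\mathcal{L}_L\mathbf{P}=0$, integrate by parts in the leading term of $A$, and absorb the Gaussian factor $\mu^{1/4}$ so that \eqref{A_K_upper} and Lemma \ref{lemma:sigma_lower} control the remaining pieces, finishing with Young's inequality. The paper simply cites the splitting \eqref{A_K} and the weighted bound \eqref{A_K_upper} from Lemmas 1 and 5 of \cite{Guo_landau_box} rather than rederiving them, so your sketches of those two steps (the cancellation structure in $Q_L$ and the Grad-type kernel estimate for $K$) are extra work that the paper delegates to the reference; they are consistent with the standard argument there. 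The only minor imprecision is that \eqref{A_K_upper} does not itself cover the term $\sigma^{ij}v_iv_jf$, which must be (and in the paper is) handled directly using the at-most-linear growth of $\sigma^{ij}v_iv_j$ against $\mu^{1/4}$ together with Lemma \ref{lemma:sigma_lower} — a triviality, not a gap.
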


\begin{proof}
We only need to prove~\eqref{L_upper_bound}. The first equality is a consequence of the fact that $\mathbf{P}f$ is in the null space of $\mathcal{L}_L$. For the second inequality, due to the extra factor $\mu^{1/4}$, from~\eqref{A_K} and~\eqref{A_K_upper}, we replace $w^{2\theta}$ by $\mu^{1/4}$, so that the contribution of $\p_i \sigma^i$ and $K$ in~\eqref{L_upper_bound} are bounded as
\begin{align*}
    & | \mu^{1/8}(\mathbf{I}-\mathbf{P})f|_\sigma | \mu^{1/8}g|_{\sigma} + | \mu^{1/8}(\mathbf{I}-\mathbf{P})f|_{L^2} | \mu^{1/8}g|_{L^2} \\
    & \lesssim  o(1)| g|_\sigma^2 + | (\mathbf{I}-\mathbf{P})f|_\sigma^2.
\end{align*}
Here we have applied Lemma \ref{lemma:sigma_lower} to control the $L^2$ norm.

For the contribution of rest terms in $A$ of~\eqref{A_K}, we compute as
\begin{align*}
  |\langle \sigma^{ij}v_i v_j (\mathbf{I}-\mathbf{P})f, \mu^{1/4} g\rangle|  & \lesssim   |\mu^{1/10} (\mathbf{I}-\mathbf{P})f|_{L^2}^2 + o(1)|\mu^{1/10}g|_{L^2}^2 \\
  &\lesssim o(1)| g|_\sigma^2 + | (\mathbf{I}-\mathbf{P})f|_\sigma^2.
\end{align*}
Through integration by part, 
\begin{align*}
   \langle \p_i[\sigma^{ij}\p_j (\mathbf{I}-\mathbf{P})f], \mu^{1/4}g \rangle &  \lesssim | \mu^{1/10}\p_i [(\mathbf{I}-\mathbf{P})f]|_{L^2}^2 + o(1) |\mu^{1/10}\p_i g|_{L^2}^2 + o(1)|\mu^{1/10} g|_{L^2}^2 \\
   & \lesssim |[(\mathbf{I}-\mathbf{P})f]|_{\sigma}^2 + o(1)|g|_{\sigma}^2,
\end{align*}
where we have applied Lemma \ref{lemma:sigma_lower}.

\end{proof}

\begin{proof}[\textbf{Proof of Corollary \ref{coro:landau}}]

By the definition of $\mathbf{P}f$ in~\eqref{projection} and the weighted norm in~\eqref{sigma_norm}, we have
\begin{align*}
\Vert \mathbf{P}f \Vert_\sigma^2 & \lesssim \int_\O \big[ a^2(x)+\sum_i b_i^2(x)+c^2(x)\big] \dd x \int_{\mathbb{R}^3}\big[(1+|v|+|v|^2)\mu^{1/2} \big]\dd v \lesssim \Vert \mathbf{P}f\Vert_{L^2_{x,v}}^2.
\end{align*}

By Lemma \ref{lemma:sigma_lower}, we have
\begin{align*}
    & \Vert \mu^{1/4}(\mathbf{I}-\mathbf{P})f\Vert_{L^2_{x,v}} \lesssim \Vert (\mathbf{I}-\mathbf{P})f\Vert_\sigma.    
\end{align*}

For the contribution of the linearized operator $\mathcal{L}_L f$, we slightly modify the estimate in~\eqref{J_3_estimate}. Applying~\eqref{L_upper_bound} for $\mathcal{L}_L$, with the test function $\psi_b$ defined in~\eqref{test_b_1}, we have
\begin{align*}
   |J_3| &   = \frac{1}{\e^{s+k}} \int_s^t \int_{\O} \langle \mathcal{L}_L f ,\psi_b \rangle \dd x  \dd \tau   = \frac{1}{\e^{s+k}} \int_s^t \int_{\O}\langle \mathcal{L}_L (\mathbf{I}-\mathbf{P})f,\psi_b \rangle \dd x \dd \tau \\ 
   &\lesssim \frac{o(1)}{\e^s} \int_s^t {\color{black}\Vert b\Vert_{L^2_x}^2} + \frac{1}{\e^{s+2k}}\int_s^t \Vert (\mathbf{I}-\mathbf{P})f\Vert_\sigma^2 \dd \tau.
\end{align*}
Here we have used the fact that $\psi_b$ in~\eqref{test_b_1} contains a factor as $\mu^{1/2}$.

For the $a$ and $c$ we can deduce a similar conclusion. Then we conclude Corollary \ref{coro:landau}.

\end{proof}

\hide

\subsection{Cylindrical domain}\label{sec:cylinder}

In this section we consider a cylindrical domain, where $\O$ is periodic in $z$-coordinate. In Section \ref{sec_l6} to Section \ref{sec:proof}, two types of elliptic equation are applied to the weak formulation \eqref{weak_formula}. 

The first one is the standard Poisson equation with $0$ Neurmann boundary condition applied to the mass $a$ and energy $c$:
\begin{align*}
    \Delta \phi = a \text{ in } \O, \ \p_n \phi = 0 \text{ on }\p\O, \ \int_\O a \dd x = 0.
\end{align*}
From the standard Poincaré inequality, when $\int_\O \phi \dd x = 0$, we still have
\begin{align*}
  \Vert \phi\Vert_{L^p_x}  & \lesssim \Vert \nabla \phi\Vert_{L^p_x}.
\end{align*}
By standard Lax-Milgram theorem and elliptic estimate, we still have a unique solution $\phi$ such that
\begin{align*}
  \Vert \phi\Vert_{W^{2,p}_x}   & \lesssim \Vert \phi\Vert_{L^p_x}. 
\end{align*}

The other elliptic system is the symmetric Poisson system~\eqref{system}. In a cylindrical domain, additional condition need to be imposed for the Korn's inequality in Theorem \ref{lemma:estimate_nabla}. The condition is that we consider a Hilbert space as
\begin{align*}
    \chi_0 := \{u\in \O\times \mathbb{R}^3: u\in H^1_x, u_3=0, \ u\cdot n(x)=0 \text{ on }\p\O, \ P_\O(\nablaA u)=0\}.
\end{align*}
Then the Korn's inequality holds when assuming $u_3=0$ additionally, this gives the coercive estimate and the conditions of the Lax-Milgram theorem are satisfied. Thus Theorem \ref{thm:symmetric_poisson} holds with the modified Hilbert space.

Then for the estimate of the momentum $b$ in \eqref{system_b}, \eqref{system_b_5} and \eqref{system_b_5_sym}, under the modified Hilbert space, we have a unique solution $\phi_b$ with $\phi_b\cdot (0,0,1) = 0$. This extra condition does not have impact on the rest estimates, then we have the same conclusion in Theorem \ref{thm:coercive} and Theorem \ref{thm:l6}.

\unhide

\noindent \textbf{Acknowledgement.} CK thanks Professor Yan Guo for pointing out in February 2023 that the problem of this paper hasn't been solved by that time. The authors write this paper in response to his question affirmatively. CK is partly supported by NSF-DMS 1900923 and 2047681, and Simons fellowship. HC is partly supported by NSFC 8201200676 and GRF grant (2130715) from RGC of Hong Kong. HC thanks Professor Jiajun Tong for helpful discussion, and thanks Professor Zhennan Zhou for the kind hospitality
during his stay at Peking University. 

\hide

\unhide

\bibliographystyle{siam}
\bibliography{citation}

\end{document}